\newtheorem{theorem}{Theorem}[section]
\newtheorem{proposition}[theorem]{Proposition}
\newtheorem{lemma}[theorem]{Lemma}
\numberwithin{equation}{section}
\theoremstyle{definition}
\newtheorem{definition}[theorem]{Definition}
\newtheorem{remark}[theorem]{Remark}
\newcommand{\CC}{\mathbb{C}}
\newcommand{\NN}{\mathbb{N}}
\newcommand{\PP}{\mathbb{P}}
\newcommand{\RR}{\mathbb{R}}
\newcommand{\ZZ}{\mathbb{Z}}
\newcommand{\TT}{\mathbb{T}}
\renewcommand{\to}{\xymatrix@1@=15pt{\ar[r]&}}
\renewcommand{\rightarrow}{\xymatrix@1@=15pt{\ar[r]&}}
\renewcommand{\mapsto}{\xymatrix@1@=15pt{\ar@{|->}[r]&}}
\renewcommand{\twoheadrightarrow}{\xymatrix@1@=15pt{\ar@{->>}[r]&}}
\renewcommand{\hookrightarrow}{\xymatrix@1@=15pt{\ar@{^(->}[r]&}}
\newcommand{\congpf}{\xymatrix@1@=15pt{\ar[r]^-\sim&}}
\begin{document}

\newboolean{xlabels} %HC
\newcommand{\xlabel}[1]{ %HC
                        \label{#1} %HC
                        \ifthenelse{\boolean{xlabels}} %HC
                                   {\marginpar[\hfill{\tiny #1}]{{\tiny #1}}} %HC
                                   {} %HC
                       } %HC
%\setboolean{xlabels}{true} %HC
\setboolean{xlabels}{false} %HC

\title[On the dynamical degrees of reflections on cubic fourfolds]{On the dynamical degrees of reflections on cubic fourfolds}

\author[B\"ohning]{Christian B\"ohning$^1$}
\address{Christian B\"ohning, Fachbereich Mathematik der Universit\"at Hamburg\\
Bundesstra\ss e 55\\
20146 Hamburg, Germany}
\email{christian.boehning@math.uni-hamburg.de}

\author[Bothmer]{Hans-Christian Graf von Bothmer}
\address{Hans-Christian Graf von Bothmer, Fachbereich Mathematik der Universit\"at Hamburg\\
Bundesstra\ss e 55\\
20146 Hamburg, Germany}
\email{hans.christian.v.bothmer@uni-hamburg.de}

\author[Sosna]{Pawel Sosna$^2$}
\address{Pawel Sosna, Fachbereich Mathematik der Universit\"at Hamburg\\
Bundesstra\ss e 55\\
20146 Hamburg, Germany}
\email{pawel.sosna@math.uni-hamburg.de}

\thanks{$^1$ Supported by Heisenberg-Stipendium BO 3699/1-2 of the DFG (German Research Foundation)}
\thanks{$^2$ Partially supported by the RTG 1670 of the  DFG (German Research Foundation)}

\begin{abstract}
We compute the dynamical degrees of certain compositions of reflections in points on a smooth cubic fourfold. Our interest in these computations stems from the irrationality problem for cubic fourfolds. Namely, we hope that they will provide numerical evidence for potential restrictions on tuples of dynamical degrees realisable on general cubic fourfolds which can be violated on the projective four-space. 
%This can be seen as the beginning of a program to use dynamical spectra as invariants for distinguishing nearly rational from rational varieties, in particular,  conjecturally irrational very %general cubic fourfolds from rational special cubic fourfolds.  
\end{abstract}

\maketitle

\section{Introduction}\xlabel{sIntroduction}

Let $Y$ be a smooth complex projective $n$-fold and $f\colon Y \dasharrow Y$ be a birational self-map. Given such an $f$, one can associate a tuple of real numbers $\lambda_i(f)$, $0\leq i\leq n$, $\lambda_i(f) \ge 1$, $\lambda_0(f) = \lambda_n (f) =1$, with it. These numbers, the \emph{dynamical degrees}, measure the dynamical complexity of $f$. Letting $f$ run through the group of birational transformations of $Y$ gives the dynamical spectrum $\Lambda(Y)$ which is a birational invariant. 

We are interested in how properties of $\Lambda (Y)$ may reflect geometric properties of $Y$. In particular, it is an interesting question whether the dynamical spectrum may even carry enough information to distinguish between (conjecturally irrational) very general smooth cubic hypersurfaces $X \subset \PP^{n+1}$ and projective $n$-space $\PP^n$ itself (for $n\geq 4$). The spectrum does ``see'' rationality in dimension $2$: If $S$ is an irrational surface, then $\Lambda(S)$ is discrete, whereas for rational $S$ the spectrum has accumulation points from below and, in fact, infinite Cantor-Bendixson rank (the accumulation points accumulate again, and so forth ad infinitum). Another instance which shows that $\Lambda$ is closely connected with rationality is the following result:
If for a projective $n$-fold $Y$ there exists a birational self-map $f$ with a tuple of dynamical degrees such that any two consecutive $\lambda_i(f)$ are distinct, then $Y$ has Kodaira dimension $0$ or $-\infty$, see \cite[Cor.\ 1.4]{DinhNgu11}.% and \cite{DinhNgu12}.

From now on, let $X$ be a very general smooth cubic fourfold, and denote by $\mathrm{Bir}_0 (X) \subset \mathrm{Bir}(X)$ the subgroup generated by reflections $\sigma_p$ in points $p\in X$. In this article we will try to obtain some constraints for the tuples of dynamical degrees $(\lambda_i(g))$ for $g\in \mathrm{Bir}_0 (X)$. Our goal here is to compute the dynamical degrees of many elements in $\mathrm{Bir}_0 (X)$ to get a feeling as to which ones can be realized on a very general cubic. Our main results can be summarised as

\begin{theorem}\xlabel{tMAIN}
Let $X$ be a very general smooth cubic fourfold. 
\begin{itemize}
\item[(1)]
[See Theorem \ref{tGeneral}, (c)] 
For a general $N$-tuple of points on $X$, $N\ge 3$, the dynamical degrees of the composition of the reflections in these points are $(1,2^N,2^N,2^N,1)$. 
\item[(2)]
[See Theorem \ref{tConicLine}]
Consider a general line $L$ on $X$ and a general plane section of $X$ through $L$, which then decomposes as $L\cup C$, where $C$ is a conic. Pick general points $p,q\in L$ and $r\in C$. If  $g=\sigma_r\sigma_q\sigma_p$, then $\lambda_1(g)=\lambda_3(g)=\frac{5+\sqrt{33}}{2}$.
\item[(3)]
[See Theorem \ref{tTriangle}]
If $p, q,r$ are the vertices of a triangle of lines on $X$ and $g=\sigma_r\sigma_q\sigma_p$, then $\lambda_1 (g) = \lambda_3 (g) = \left( \frac{1+ \sqrt{5}}{2}\right)^3$.
\end{itemize}
\end{theorem}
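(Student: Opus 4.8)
The plan is to pass to a smooth projective model $Y\to X$ on which $g=\sigma_r\sigma_q\sigma_p$ becomes a pseudo-automorphism, and then to read off $\lambda_1(g)=\lambda_3(g)$ as the spectral radius of $\tilde g^{*}$ acting on $H^{1,1}(Y)$. First I would recall, exactly as in the proof of Theorem \ref{tGeneral}, the resolution of a single point reflection $\sigma_p$: after blowing up $p$, the projection $\mathrm{Bl}_pX\rightarrow\PP^4$ from $p$ is generically $2:1$, $\sigma_p$ is its covering involution, and the failure of $\sigma_p$ to be biregular is concentrated along the surface swept out by the lines of $X$ through $p$. The new input for the triangle is that each reflection is taken in a \emph{vertex}: writing $\ell_1,\ell_2,\ell_3$ for the three lines and $p=\ell_1\cap\ell_2$, $q=\ell_2\cap\ell_3$, $r=\ell_3\cap\ell_1$, two entire lines $\ell_1,\ell_2\subset X$ pass through $p$, so $\sigma_p$ interchanges the strict transforms of $\ell_1$ and $\ell_2$ and (generically) fixes $\ell_3$. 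I would make this precise by computing how $\sigma_p$ acts on $h$, on $[\ell_1],[\ell_2],[\ell_3]$, and on the exceptional divisors over $p$ and over the $\ell_i$.

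Next I would construct the common model: let $Y$ be the blow-up of $X$ along the three lines and the three vertices, carried out in an order that resolves the mutual intersections (equivalently, blow up the vertices first and then the strict transforms of the lines). By the local description in the first step, each $\sigma_{p_{ij}}$ lifts to a pseudo-automorphism of $Y$, hence so does $\tilde g=\tilde\sigma_r\tilde\sigma_q\tilde\sigma_p$, and consequently $(\tilde g^{\,n})^{*}=(\tilde g^{*})^{n}$ on $H^{1,1}(Y)$, so that $\lambda_1(g)=\lambda_1(\tilde g)$ is the spectral radius of $\tilde g^{*}$ on $H^{1,1}(Y)=\QQ h\oplus\bigoplus_i\QQ[E_{\ell_i}]\oplus\bigoplus_{i<j}\QQ[E_{p_{ij}}]$. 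Finally, $\lambda_3(g)=\lambda_1(g)$ follows from the duality between the $\tilde g^{*}$-actions on $H^{1,1}(Y)$ and $H^{3,3}(Y)$ under the Poincar\'e pairing, equivalently from the fact that $g^{-1}=\sigma_p\sigma_q\sigma_r$ is a reflection composition of the same type (this is the general fact about $\mathrm{Bir}_0(X)$ used throughout).

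It then remains to identify the spectral radius. From the first step the matrix of $\tilde g^{*}$ in the above basis is completely explicit; using the $\ZZ/3$ rotational symmetry of the triangle (cyclically permuting $\ell_1,\ell_2,\ell_3$ and $p,q,r$), its characteristic polynomial factors, the contribution of the non-symmetric part having all roots on the unit circle, and the only factor with a root of absolute value $>1$ being $t^{2}-4t-1$. Its dominant root is $2+\sqrt5=\bigl(\tfrac{1+\sqrt5}{2}\bigr)^{3}$, which gives $\lambda_1(g)=\lambda_3(g)=\bigl(\tfrac{1+\sqrt5}{2}\bigr)^{3}$. An equivalent route, avoiding the explicit model, is to fix one concrete smooth cubic fourfold carrying a visible triangle of lines, write the three reflections in coordinates, compose and resolve the indeterminacy by hand, and compute the degree sequence $\deg(g^{n})$, showing that it satisfies the recurrence $d_{n+1}=4d_{n}+d_{n-1}$; since only the incidence pattern of the triangle enters, this also yields the value for the very general $X$.

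The step I expect to be the genuine obstacle is the construction and justification of the model in the second paragraph: one must verify that $Y$ is already large enough, i.e.\ that $\tilde g$ has no divisorial indeterminacy and that no further blow-ups are forced under iteration. Concretely this amounts to checking that $\sigma_{p_{ij}}$ carries the exceptional divisor $E_{\ell_k}$ over each line either back to $E_{\ell_k}$ or to the exceptional divisor over the ``new'' line it produces, and tracking precisely how the exceptional loci of the three reflections and the blown-up lines and vertices meet one another. This is delicate bookkeeping of intersection numbers, and an error in a single one of them would change the matrix and hence the final answer.
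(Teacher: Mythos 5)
Your proposal addresses only part (3) of the statement. Parts (1) and (2) require entirely separate arguments (for (1), a $1$-stability argument on $X$ itself, established in the paper by placing the points on a general elliptic plane section and checking that no iterate contracts a divisor into the indeterminacy locus; for (2), the whole apparatus of tracking intersection points of curve transforms with $L$, $C$ and the tangent divisors, leading to the $3\times 3$ matrix with dominant eigenvalue $\frac{5+\sqrt{33}}{2}$), and none of this appears in your write-up. So even if your treatment of the triangle were complete, the proof of the theorem as stated would not be.

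For part (3) itself, the step you yourself flag as "the genuine obstacle" is in fact the whole content, and your proposed resolution is doubtful. The indeterminacy locus of $\sigma_{p}$ is not the pair of triangle lines through $p$ but the entire cone $\mathcal{L}_p$ of lines of $X$ through $p$ (a surface, a cone over a $(3,3)$ curve in $\PP^1\times\PP^1$), so blowing up the three vertices and the three lines does not resolve the reflections; more importantly, $\sigma_{p_0}$ contracts the tangent divisor $\mathbb{T}_{p_0}$ to the point $p_0$, which lies on lines through $p_1$ and $p_2$ and hence sits \emph{inside} the indeterminacy loci of the next reflections --- this is exactly the mechanism that makes $\lambda_1$ drop from $2^3=8$ to $2+\sqrt{5}$, and you have not verified that your seven-dimensional model restores $1$-stability, nor computed the resulting matrix; the factorisation with only $t^2-4t-1$ contributing is asserted, not derived. (Your appeal to Poincar\'e duality for $\lambda_1=\lambda_3$ is also not correct as stated for birational maps; the valid argument is $\lambda_3(g)=\lambda_1(g^{-1})$ together with the symmetry of the configuration.) The paper avoids constructing a stable divisorial model altogether: it computes $\lambda_3$ by choosing coordinates with $\mathbb{T}_{p_i}=\{x_i=0\}$, writing the three reflections as explicit quadratic maps, and proving that after one application of $\sigma_{p_0}\sigma_{p_2}\sigma_{p_1}\sigma_{p_0}$ all intersections of the transforms of a general curve with the tangent divisors concentrate at the vertices, so that the degree growth is governed by a local power-series computation on curve branches (the matrices $P_0,P_1,P_2$ whose product has minimal polynomial $x^2(x-1)^2(x^2-4x-1)$). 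Your second, "equivalent route" is much closer to this, but the claim that the degree sequence satisfies $d_{n+1}=4d_n+d_{n-1}$ is precisely what needs the concentration-at-the-vertices argument, which is missing.
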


We have not computed $\lambda_2$ in the last two cases of the Theorem yet, but see Remark \ref{rSecond} for some comments about the computations of $\lambda_2$ in case (2).

Let us now briefly describe a potential ``Ansatz" to prove irrationality of a very general cubic fourfold $X$: prove constraints for the dynamical degrees on $X$ and show that these constraints are violated on $\PP^4$ by constructing an example; note that for many examples of Cremona transformations, e.g. monomial ones \cite{Lin13}, the dynamical degrees are readily computable. These constraints could either be of an arithmetic nature, for example, which algebraic number fields the $\lambda_i$ lie in, or consist of new inequalities, for example, bounds for the size of the ratio $\max \{\log(\lambda_2)/\log(\lambda_1), \log(\lambda_2)/\log(\lambda_3)\}$.

Note, that there exist birational self-maps with interesting multi-degrees, but uninteresting dynamical degrees. Indeed, Pan in \cite{Pan00} and \cite{Pan13} constructed birational self-maps of $\PP^3$ of all bidegrees allowed by the Hodge and Cremona inequalities (see \cite[Prop.\ 7.1.7 \& Rem.\ 7.1.8]{Dolg12}) just by considering de Jonquieres maps, i.e. maps extended from $\PP^2$ and preserving a linear fibration on $\PP^3$. However, the dynamical degrees of these have $\lambda_1 = \lambda_2$ by \cite{DinhNgu11}. Therefore, to prove constraints one will presumably have to work with all iterates of a given birational map $f$ and not only a sufficiently high power of $f$. This approach to distinguishing rational from irrational varieties ties in well with the old philosophy that varieties closer to rational ones should admit more or ``wilder'' birational self-maps; this goes back to the work on birational rigidity of Iskovskikh-Manin \cite{I-M71}, see also the book by Pukhlikov \cite{Pukh13} for lots of developments in this direction. The main new point of view here is that we propose the dynamical spectrum as a means to \emph{measure the size} of $\mathrm{Bir}(X)$, or in other words, to quantify its ``wildness". This means we want to use \emph{quantitative differences} (the dynamical spectrum) instead of only qualitative ones. Compare this with the fact that in the recent article \cite{B-L14} the authors prove that $\mathrm{Bir}(\PP^3)$ and $\mathrm{Bir}(Y)$, where $Y$ is  a cubic threefold, are very much alike in several qualitative aspects. For instance, in both one can find birational self-maps contracting surfaces birational to any given birational type of a ruled surface. But there are examples of two varieties where in each case the birational automorphism group contains elements contracting subvarieties of uncountably many birational types, but whose dynamical spectra can be seen to be distinct. For instance, one can take $\PP^{n+2}$ and $Z=\PP^n \times Y$, $n\ge 3$, $Y$ a surface of general type without rational curves, hence no non-constant maps $\PP^n \dasharrow Y$.  Since every rational map $\PP^n \dasharrow Y$ is constant, every map in $\mathrm{Bir}(Z)$ preserves the fibration $Z\to Y$, hence cannot have pairwise different consecutive dynamical degrees \cite{DinhNgu11}. But $\PP^{n+2}$ does have maps with this property.

\smallskip

\noindent
\textbf{Conventions.} We work over the field of complex numbers $\CC$ unless stated otherwise. By a variety we mean a possibly reducible integral separated scheme of finite type over $\CC$. By a subvariety we mean a closed subvariety unless stated otherwise. If $f\colon X\dasharrow Y$ is a rational map, we denote by $\mathrm{dom}(f)$ the largest open subset of $X$ on which $f$ is a morphism. The graph $\Gamma_f \subset X \times Y$ of $f$ is the closure of the locus of points $(x, f(x))$ with $x \in \mathrm{dom}(f)$.

\section{Preliminaries}\xlabel{sPreliminaries}

 In his foundational paper on birational correspondences \cite{Zar43}, Zariski introduced the notations $f[Z]$ resp.\ $f\{Z\}$ for an irreducible subvariety $Z\subset X$, which he called, respectively, the \emph{(birational) transform} $f[Z]$ and \emph{total transform} $f\{Z\}$ (\cite[pp.\ 519--520]{Zar43}). Namely, 
%look at the graph $\Gamma_f \subset X \times X$ (the closure of the locus of points $(x, f(x))$ with $x \in \mathrm{dom}(f)$) with its two projections $p_1 : \Gamma_f \to X$, $p_2 : \Gamma_f \to X$: 
%\[
%\xymatrix{
%& \Gamma_f\ar[ld]_{p_1}\ar[rd]^{p_2} & \\
% X \ar@{-->}[rr]^f  &  & X
%}
%\]
%Then $f[Z]$ is nothing but $p_2(p_1^{-1}(Z))\subset X$. On the other hand, $f\{Z\}$ is the (possibly reducible) variety of \emph{all} points corresponding to points in $Z$:
%\[
%f \{ Z \} = \{ x \in X \mid p_1(p_2^{-1}(x)) \in Z \} . 
%\]
the birational map $f$ induces an automorphism of function fields
\[
\xymatrix{
\CC (X) & \CC (X) \ar[l]^{f^*}_{\simeq}
}
\]
and one says that irreducible subvarieties $W$, $W'$ of $X$ \emph{correspond} to each other under $f$ if there is a (general) valuation $v\colon \CC (X)^* \to \Gamma$ (possibly not necessarily divisorial or of rank $1$), for some (totally ordered abelian) value group $\Gamma$, such that the center of $v$ on $X$ is $W$, and the center of $v\circ f^*$ on $X$ is $W'$. 

Then $f[Z]$ is the (possibly reducible) subvariety of $X$ such that (1) each irreducible component of $f[Z]$ corresponds to $Z$ and  (2) every irreducible subvariety of $X$ which corresponds to $Z$ is contained in $f[Z]$. On the other hand, $f\{ Z\}$ is the locus of all points on $X$ which correspond to some point in $Z$. We extend the definitions of $f[Z]$ and $f\{Z\}$ to reducible $Z$'s componentwise.

For example, let $f$ be an ordinary quadratic plane Cremona transformation. Then,  if $p$ is a base point of $f$, we have  $f[p] = L$, where $L$ is the line corresponding to the base point $p$ in the image. For a different example,  if $Z$ is a curve passing through a base point $p$ and not a component of the triangle of lines, then  $f[ Z ] $ is a curve and $f\{ Z \} = f [Z] \cup L$. 
In this example, $L$ corresponds to $p\in Z$, but $L$ does not correspond to $Z$!

Geometrically, $f[Z]$ resp. $f\{Z\}$ have the following meaning: 
Consider the graph $\Gamma_f \subset X \times X$ with its two projections $p_1 \colon \Gamma_f \to X$, $p_2 \colon \Gamma_f \to X$: 
\[
\xymatrix{
& \Gamma_f\ar[ld]_{p_1}\ar[rd]^{p_2} & \\
 X \ar@{-->}[rr]^f  &  & X.
}
\]
Then $f[Z]$ consists of the images on $X$ via $p_2$ of all the irreducible subvarieties of $\Gamma_f$ which map \emph{onto} $Z$ via $p_1$; $f\{ Z\}$ consists of the images on $X$ via $p_2$ of all the irreducible subvarieties of $\Gamma_f$ which map \emph{into} $Z$ via $p_1$.
\smallskip

\begin{definition}\xlabel{dMoves}
If a birational map $f \colon X\dasharrow X$ is a composition of birational maps 
\[
\xymatrix{
X \ar@{-->}[r]^{f_0} & X \ar@{-->}[r]^{f_2} & X  \ar@{-->}[r]^{f_3}  &  \ldots  \ar@{-->}[r]^{f_{N-1}} & X  \ar@{-->}[r]^{f_N} & X 
}
\]
we define, for every $i\in \ZZ$, $X_i := X$, and $f_i\colon X_i \to X_{i+1}$ to be equal to the map $f_j$, $j\in\{0, \dots , N\}$, with $j\equiv i$ (mod $(N+1)$). 

Let $Z \in X_i$ be a subvariety. We define its \emph{move from position $i$ to position $j$} as the subvariety of $X_j$ given by
\begin{align*}
M_{i\shortrightarrow j} [Z]& := ( f_{j-1}\circ \dots \circ f_i) [ Z ] \quad\;\; \mathrm{if}\; j\ge i \\ 
M_{i\shortrightarrow j} [Z] &:= ( f_{j}^{-1}\circ \dots \circ f_{i-1}^{-1}) [ Z ] \quad \mathrm{if}\; j < i . 
\end{align*}

%For $Z\subset X_i$ we can then define its \emph{forward resp.\ backward orbits started at $i$} as 
%\[
%O_i^+ [Z] : = \bigcup_{j\ge i} M_{i\shortrightarrow j} [Z ] \; \mathrm{resp.} \; O_i^- [Z] : = \bigcup_{j\le i} %M_{i\shortrightarrow j} [Z]
%\]
%and its \emph{orbit started at $i$} by $O_i [ Z] = O_i^+ [Z]\cup O_i^- [Z]$. 

%We also define the \emph{orbit segment from position $i$ to $j$}, for $j\ge i$, by
%\[
%O_{i,j} [Z] := \bigcup_{j\ge k \ge i} M_{i\shortrightarrow k} [Z]
%\]
%and for $j < i$ by
%\[
%O_{i,j} [ Z ] := \bigcup_{j \le k \le i } M_{i\shortrightarrow k} [Z] .
%\]
Of course, it makes sense to make the above definitions also with the brackets $[ \cdot ]$ replaced by $\{ \cdot \}$ everywhere, and then speak of the \emph{total move} etc.
\end{definition}

We now move on to the main subject of the article, the dynamical degrees. Let $Y$ be a smooth projective $n$-fold, $f\colon Y\dasharrow Y$ be a birational map. The induced map $f^{*}$  on the group of cycles $A^i(Y)$ of codimension $i$ is defined as follows: Choose a smooth resolution of $f$
\[
\xymatrix{
 & Z\ar[ld]_{\pi_1}\ar[rd]^{\pi_2} & \\
Y \ar@{-->}[rr]^f & & Y
}
\]
and define $f^* = \pi_{1*}\circ \pi_2^*$.

\begin{definition}
Let $Y$ and $f$ be as above and let $\rho_i$ be the spectral radius of $f^*$ on $A^i (Y)$. The \emph{$i$-th dynamical degree} of $f$ is 
\[\lambda_i (f) = \liminf\limits_{m\to \infty} (\rho_i ((f^m)^*))^{\frac{1}{m}}.\] 
\end{definition}

The $i$-th dynamical degree can also be written as 
\[\lambda_i(f)=\liminf\limits_{m\to \infty}(H^{n-i}.(f^m)^*H^i)^{\frac{1}{m}}\]
for any ample divisor $H$; see \cite[Thm.\ 2.4]{Guedj10}.  

The $\lambda_i(f)$ are invariant under birational conjugacy, see \cite[Cor.\ 2.7]{Guedj10}, are related to the entropy of $f$ in a precise sense (for example, the entropy $h(f)$ is bounded above by $\mathrm{max}_i \{ \mathrm{log} (\lambda_i(f)\}$, see \cite{DS05}), and one way to think of $\lambda_i (f)$ intuitively may be as the entropy of $f$ on algebraic cycles of codimension $i$, i.e.\ a measure of how much information the action of the iterates of $f$ on cycles of codimension $i$ carries.

\begin{definition}
The \emph{dynamical spectrum} of $Y$ is defined as
\[
\Lambda (Y) := \{ (\lambda_0, \dots, \lambda_n) \in \RR^{n+1} \mid f \in \mathrm{Bir}(Y)\}.
\] 
\end{definition}

Clearly, $\Lambda(Y)$ is a birational invariant of $Y$ and as a subset of $\RR^{n+1}$ it comes with interesting point-set, metric or topological properties.

The sequence $\lambda_0(f), \lambda_1 (f), \ldots , \lambda_{n-1}(f), \lambda_n(f)$ is known to be always \emph{log-concave}, i.e.\ $p \mapsto \mathrm{log} (\lambda_p(f))$ is concave, see \cite{DinhNgu11} and references therein; it implies that the sequence first strictly ascends for a while, then stays constant, then strictly descends again:
\[
1=\lambda_0(f) < \dots < \lambda_p(f) = \dots = \lambda_{p'} (f) > \lambda_{p'+1}(f) > \dots > \lambda_{n}(f) =1.
\]

Another important property we will frequently use is
\begin{equation}\label{log-concavity}
\lambda_j^2 \ge \lambda_{j-1}\lambda_{j+1}\; \forall j;
\end{equation}
see \cite[Thm.\ 2.4]{Guedj10}.

\begin{definition}\xlabel{dALgStab}
A birational map $f\colon Y\dasharrow Y$ is said to be \emph{algebraically $i$-stable} if
we have
\[
(f^m)^* = (f^*)^m \quad \forall m \in \NN,
\]
on $A^i(Y)$.
\end{definition}

\begin{lemma}\xlabel{lDivisors}
Suppose that neither $f \colon Y \dasharrow Y$ nor any iterate $f^j$ contract any divisors into the indeterminacy locus of $f$. Then $f$ is algebraically $1$-stable. The converse also holds. 
\end{lemma}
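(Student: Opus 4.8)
The plan is to characterize algebraic $1$-stability of $f$ in terms of the geometry of the resolution $\pi_1, \pi_2 \colon Z \to Y$ of $f$, and then to iterate. First I would recall the standard criterion (due to Diller--Favre in dimension $2$, and in general implicit in the work on dynamical degrees) that $f$ fails to be $1$-stable precisely when there is a divisor $E$ and an integer $j \ge 1$ such that $f^j$ contracts $E$ to a subvariety contained in the indeterminacy locus $\mathrm{Ind}(f)$ — equivalently, using Zariski's language from the Preliminaries, such that the total move $M_{0 \shortrightarrow j}[E]$ lands inside $\mathrm{Ind}(f)$. So the proof splits into two implications.

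For the forward direction ("no such contraction $\Rightarrow$ $1$-stable"), I would argue by induction on $m$ that $(f^m)^* = (f^*)^m$ on $A^1(Y) = \mathrm{Pic}(Y) \otimes \RR$. The obstruction to $(f^{m+1})^* = (f^*)(f^m)^*$ is exactly a discrepancy supported on divisors $D$ such that $f^m$ has image meeting, or is "absorbed" by, the indeterminacy locus of $f$: concretely, for a divisor class, $(f^{m+1})^* H$ and $f^*((f^m)^* H)$ differ by a combination of exceptional divisors of $f$ that get contracted. The key bookkeeping is that pulling back a divisor $H$ under $f$ produces $f^*H = $ (strict transform) $+ \sum a_E E$ over the exceptional divisors $E$ of $f^{-1}$; the failure of stability at step $m$ means some such $E$ was itself the image of a divisor contracted by $f^m$, i.e.\ a divisor contracted by $f^m$ into $\mathrm{Ind}(f)$. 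Ruling this out for all $j \le m$ inductively gives $(f^{m+1})^* = (f^*)^{m+1}$. I would make this precise by passing to a common resolution of $f, f^2, \dots, f^m$ and comparing pushforward-pullback compositions, using that the relevant correction terms vanish exactly under the stated hypothesis.

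For the converse ("$1$-stable $\Rightarrow$ no iterate contracts a divisor into $\mathrm{Ind}(f)$"), I would argue contrapositively: if some $f^j$ contracts a divisor $D \subset Y$ with $M_{0\shortrightarrow j}[D] \subset \mathrm{Ind}(f)$, then choosing an ample $H$ one computes that $(f^{j+1})^* H$ is strictly smaller (in the effective-cone sense, or after intersecting with a suitable curve class) than $f^*((f^j)^* H)$, because the component of $(f^j)^* H$ supported on $D$ is killed when we then apply $f^*$ — $f^*$ cannot "see" the part of a class supported inside its own indeterminacy locus. Hence $(f^{j+1})^* \ne (f^*)^{j+1}$, contradicting $1$-stability.

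The main obstacle is making the inductive comparison in the forward direction genuinely rigorous rather than heuristic: one has to track, across a tower of resolutions, precisely which exceptional divisors contribute to $\pi_{1*}\pi_2^*$ and verify that the hypothesis "no $f^j$ contracts a divisor into $\mathrm{Ind}(f)$" is exactly the condition that makes all cross-terms vanish when composing. A clean way to handle this is to work with the strict-transform description: show that under the hypothesis, the strict transform under $f^{m}$ of any prime divisor is not contracted by $f$ into an indeterminacy point, so that the naive composition law $(f^{m+1})^* D = (f^*)((f^m)^* D)$ holds on prime divisors and extends linearly. I would also remark that this lemma is essentially \cite[Thm.\ 1.14]{DinhNgu11}-style folklore but include the short argument for completeness, since it is used repeatedly in the sequel to justify computing $\lambda_1$ and $\lambda_3$ as honest spectral radii of a single linear map on the Picard group (respectively on $A^3$, by duality, i.e.\ replacing $f$ by $f^{-1}$).
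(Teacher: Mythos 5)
Your plan is correct and follows essentially the same route as the paper: the forward direction by showing that, under the hypothesis, the preimages of $I_f$ under all iterates have codimension at least $2$, so $(f^{m+1})^*D$ and $f^*\bigl((f^m)^*D\bigr)$ agree in codimension one and hence as classes; the converse by exhibiting the effective discrepancy cycle supported on the contracted divisor. The only imprecision is in your justification of the converse: for general ample $H$ the contracted divisor $D$ is not a component of $(f^j)^*H$, rather it is the locus where the naive composite of defining polynomials of $f^{j+1}$ vanishes identically, giving $(f^{j+1})^*H = (f^j)^*(f^*H) - mD$ with $m>0$; this is exactly the effective difference the paper also invokes, so the argument goes through.
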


\begin{proof}
Let $I_f \subset Y$ be the indeterminacy set of $f$. Since $Y$ is smooth, $I_f$ has codimension at least $2$, and all preimages \[ \overline{( f^{k}\mid_{\mathrm{dom}(f^k)})^{-1}(I_f)}, \quad k\ge 1, \] have codimension at least $2$ since no divisor is contracted into $I_f$ under any iterate of $f$. Hence, for any divisor class $D$, we have that $f^* (f^* D)$ and $(f^2)^* (D)$ 
coincide in codimension $1$, hence everywhere, and the same argument yields $(f^*)^n = (f^n)^*$.

Conversely, let us assume that some iterate of $f$, without loss of generality $f$ itself, contracts a divisor into the indeterminacy locus $I_f$ of $f$. Then $f^* (f^* H)$, where $H$ is the class of any ample divisor,  and $(f^2)^* (H)$ differ by this  effective cycle contracted by $f$. 
\end{proof}

Furthermore, we will use

\begin{lemma}\xlabel{lInverse}
Let $Y$ and $f \colon Y \dasharrow Y$ be as above. The dynamical degrees satisfy the equation
\[
\lambda_{n-i}(f^{-1}) = \lambda_i (f).
\]
\end{lemma}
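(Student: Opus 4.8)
The plan is to avoid working with the (a priori resolution‑dependent) linear operators $f^{*}$ on the $A^{i}(Y)$ directly, and instead to argue via the intersection‑number description of the dynamical degrees recalled above, $\lambda_{i}(g)=\liminf_{m\to\infty}\bigl(H^{n-i}\cdot (g^{m})^{*}H^{i}\bigr)^{1/m}$, which is valid for an arbitrary ample divisor $H$ by \cite[Thm.\ 2.4]{Guedj10}. The essential point is that we are free to use \emph{one and the same} ample $H$ when this formula is applied both to $g=f$ in codimension $i$ and to $g=f^{-1}$ in codimension $n-i$; once this is observed, the whole statement reduces to a single instance of the projection formula.

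So fix $m\in\NN$ and choose a smooth resolution of $f^{m}$,
\[
\xymatrix{
 & W\ar[ld]_{\sigma_1}\ar[rd]^{\sigma_2} & \\
Y \ar@{-->}[rr]^{f^m} & & Y,
}
\]
so that $(f^{m})^{*}=\sigma_{1*}\circ\sigma_{2}^{*}$ by definition. Since $(f^{m})^{-1}=(f^{-1})^{m}$ in $\mathrm{Bir}(Y)$, the same $W$ with its two projections interchanged is a smooth resolution of $(f^{-1})^{m}$, and hence $\bigl((f^{-1})^{m}\bigr)^{*}=\sigma_{2*}\circ\sigma_{1}^{*}$.

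Next I compute, using the projection formula for the proper morphisms $\sigma_{1},\sigma_{2}$ and the invariance of the degree of a $0$‑cycle under proper pushforward (here $H^{i}\in A^{i}(Y)$ and $H^{n-i}\in A^{n-i}(Y)$ are regarded as single classes, exactly as they enter the operators above):
\begin{align*}
H^{n-i}\cdot (f^{m})^{*}H^{i}
 &= H^{n-i}\cdot \sigma_{1*}\bigl(\sigma_{2}^{*}H^{i}\bigr)
  = \deg\bigl(\sigma_{1}^{*}H^{n-i}\cdot \sigma_{2}^{*}H^{i}\bigr) \\
 &= H^{i}\cdot \sigma_{2*}\bigl(\sigma_{1}^{*}H^{n-i}\bigr)
  = H^{i}\cdot \bigl((f^{-1})^{m}\bigr)^{*}H^{n-i}.
\end{align*}
Thus the sequences $m\mapsto H^{n-i}\cdot (f^{m})^{*}H^{i}$ and $m\mapsto H^{i}\cdot \bigl((f^{-1})^{m}\bigr)^{*}H^{n-i}$ agree term by term; taking $m$‑th roots and passing to the $\liminf$, the first yields $\lambda_{i}(f)$ and the second yields $\lambda_{n-i}(f^{-1})$, which is the assertion.

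There is no genuinely hard step here: the only thing requiring a little care is the bookkeeping that a resolution of $f^{m}$ with its projections swapped really does resolve $(f^{-1})^{m}$, and that the classes fed into the cohomological operators are precisely those appearing in the displayed intersection numbers. Equivalently, one could phrase the above identity by saying that, for every $m$, the operators $(f^{m})^{*}$ on $A^{i}(Y)$ and $\bigl((f^{-1})^{m}\bigr)^{*}$ on $A^{n-i}(Y)$ are transpose to one another with respect to the intersection pairing $A^{i}(Y)\times A^{n-i}(Y)\to\ZZ$ (after passing to numerical equivalence, where this pairing is perfect), so that $\rho_{i}((f^{m})^{*})=\rho_{n-i}\bigl(((f^{-1})^{m})^{*}\bigr)$ for all $m$ and the conclusion follows directly from the definition of $\lambda_{i}$; the intersection‑number route above has the advantage of sidestepping any discussion of on which group the relevant pairing is perfect.
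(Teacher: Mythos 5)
Your proof is correct and follows essentially the same route as the paper's: the paper deduces $d_{n-i}(f)=d_i(f^{-1})$ from the fact that $\Gamma_{f^{-1}}$ is $\Gamma_f$ with the two factors of $Y\times Y$ interchanged, which is exactly your observation that a smooth resolution of $f^{m}$ with its two projections swapped resolves $(f^{-1})^{m}$. You merely make explicit, via the projection formula, the term-by-term identity $H^{n-i}\cdot (f^{m})^{*}H^{i}=H^{i}\cdot ((f^{-1})^{m})^{*}H^{n-i}$ that the paper leaves implicit in the phrase ``this follows immediately.''
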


\begin{proof}
Let $H$ be a general hyperplane section of $Y$. Then the map $f$ (and each iterate of it) has a multidegree $(d_0 (f), \ldots , d_n(f))$ where $d_i$ is the degree, relative to $H^{n-i}$, of the birational transform of $H^i$. We will show that even
\[
d_{n-i}(f) = d_i (f^{-1}). 
\]
This follows immediately from the fact that $\Gamma_{f^{-1}} = \widetilde{\Gamma}_f$ where $\Gamma_{f^{-1}} \subset Y \times Y$ is the graph of $f^{-1}$ and $ \widetilde{\Gamma}_f$ is the image of the graph of $f$ under the map that switches the factors in $Y\times Y$. 
\end{proof}

We also recall the following special case of the main result in \cite{DinhNgu11}. If $X\subset \PP^{n+1}$ is an $n$-dimensional (smooth) cubic hypersurface, $l\subset X$ a line, and 
\[
\pi_l\colon X_l = \mathrm{Bl}_l (X) \to \PP^{n-1}
\]
the induced conic fibration, and if, moreover, $f\in \mathrm{Bir} (X)$ preserves this fibration, and $\overline{f}\in \mathrm{Bir}(\PP^{n-1})$ is the induced map, then \cite[Thm.\ 1.1]{DinhNgu11} implies 
\begin{equation}\label{degrees-fibration}
\lambda_k (f) = \mathrm{max} \{ \lambda_k (\overline{f}), \lambda_{k-1} (\overline{f})\} \quad \forall k. 
\end{equation}

\section{Geometry of a single reflection}\xlabel{sSingleReflection}

We fix a smooth cubic fourfold $X$ and a very general point $p\in X$, and want to describe in more detail the geometry of the birational reflection map $\sigma_p$ and of its resolution. Moreover, we want to understand the geometry of the indeterminacy locus and its resolution.
  
There is a commutative diagram
\[
\xymatrix{
\widetilde{X} \ar[d]_{\mathrm{bl}_{S'(p)}} \ar[rdd]^{\varphi}  &  \\
X'  \ar[d]_{\mathrm{bl}_p}  &    \\
X \ar@{-->}[r]^{\sigma_p} &  X.
}
\]

Here $\mathrm{bl}_p$ is the blow-up of the point $p\in X$. Its exceptional divisor on $X'$ is denoted by $E'(p)$. Let 
\[
Y (p) := X \cap \mathbb{T}_p X 
\]
be the intersection of $X$ with the embedded projective tangent space $\mathbb{T}_p X \simeq \PP^4$ to $X$ in $p$ inside $\PP^5$. Then $Y(p)$ is a singular cubic threefold with a single node at $p$ for a generic choice of $p$, which was our standing assumption. Its strict transform inside $X'$ is denoted by $Y'(p)$. In $Y'(p)$, the singular point $p$ has been replaced by the projectivized tangent cone of $Y(p)$ in $p$, which is $\PP^1\times \PP^1$. 

\begin{remark}\xlabel{rIndeterminacylocus}
The indeterminacy locus of $\sigma_p$ is the locus of all lines through $p$ inside $X$. 
\end{remark}

Note that all the lines through $p$ inside $X$ form a surface $S(p)$ inside $Y(p)$ which is a cone over a curve $C(p)$ of bidegree $(3,3)$ in $\PP^1 \times \PP^1$ where we view this $\PP^1 \times \PP^1$ here as the intersection of the tangent cone of $Y(p)$ in $p$ with a $\PP^3$ inside $\mathbb{T}_p X$ which does not pass through $p$. 
Namely, by \cite[p.\ 187]{FW89}, the equation of a nodal cubic threefold in $\PP^4$ (in our case $Y(p)$) can be written as
\[
\xi_4Q+R=0
\]
in appropriate homogeneous coordinates $(\xi_0:\dots : \xi_4)$ with $Q$ resp.\ $R$ in $\CC[\xi_0, \dots , \xi_3]_2$ resp.\ $\CC[\xi_0, \dots , \xi_3]_3$ and the node equal to the point $(0:0:0:0:1)$. Then a line through $p$ can be written in parameter form as
\[
(0:0:0:0:1) + \lambda (a:b:c:d:0) .
\]
This lies on the nodal cubic if and only if $Q(a:b:c:d)=R(a:b:c:d)=0$.

Generically, $C(p)$ will be smooth, but of course all sorts of singular and/or reducible curves can occur if we drop the generality assumption; the possibilities are listed and discussed in \cite{FW89}, see also \cite{W87}.

The strict transform $S'(p)$ of $S(p)$ on $X'$ is a (smooth) ruled surface over $C(p)$. In a second step, we blow up $S'(p)$ inside $X'$ to obtain $\widetilde{X}$, thus we replace it by the projectivization of its normal bundle inside $X'$. The strict transforms of $E'(p)$, $Y'(p)$ on $\widetilde{X}$ are denoted by $\widetilde{E}(p)$, $\widetilde{Y}(p)$.  One has that $\widetilde{E}(p)$ is the blow-up of $E'(p) \simeq \PP^3$ in the curve $C(p)$. Let $\widetilde{F}(p)$ be the exceptional divisor of $\mathrm{bl}_{S'(p)}$. It is a $\PP^1\times \PP^1$-bundle over the curve $C(p)$. 

The geometry of the resolution is summarized in Figure \ref{figure1} below.

\begin{figure}
\caption{Geometry of the resolution of a reflection.}\label{figure1}
\begin{center}
\includegraphics[height=150mm]{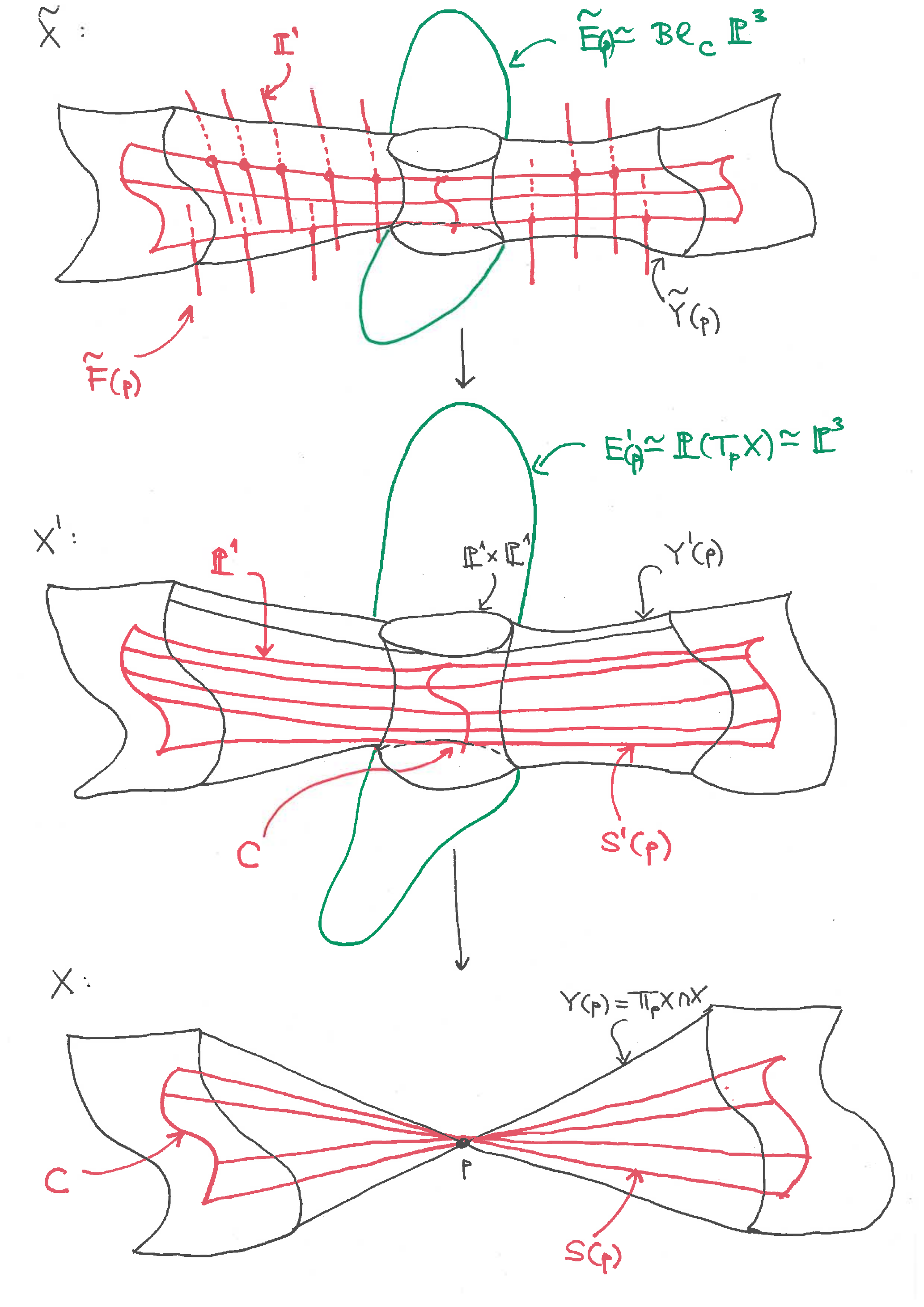}
%\captionof{}{Geometry of the situation}
\end{center}
\end{figure}

One has that $\widetilde{E}(p)$ and $\widetilde{F}(p)$ intersect in the exceptional divisor of the blow-up of $\PP^3$ in $C(p)$, a ruled surface over $C(p)$; $\widetilde{E}(p)$ and $\widetilde{Y}(p)$ intersect in a $\PP^1 \times \PP^1$; and $\widetilde{F}(p)$ and $\widetilde{Y}(p)$ intersect in a ruled surface isomorphic to $S' (p)$. 

Notice that in this case the birational self-map $\sigma_p$ lifts to an automorphism $\widetilde{\sigma}_p$:
\[
\xymatrix{
\widetilde{X}\ar[d] \ar[r]^{\widetilde{\sigma }_p} & \widetilde{X} \ar[d]  \\
X \ar@{-->}[r]_{\sigma_p}  &   X.
}
\]

Let $\widetilde{H}$ be the strict transform of a general hyperplane section $H$ of $X$. 

\begin{proposition}\xlabel{pLinearSystemReflection}
The morphism $\varphi$ is given by the linear system 
\[
 | 2\widetilde{H} - 3\widetilde{E}(p) - \widetilde{F}(p)|.
 \]
\end{proposition}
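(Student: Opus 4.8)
The reflection $\sigma_p$ is the map which sends a general point $x\in X$ to the third point of intersection of the line $\overline{px}$ with $X$. The plan is to pull this description back through the resolution $\varphi\colon \widetilde X\to X$ (i.e.\ the composition $\mathrm{bl}_p$ followed by $\mathrm{bl}_{S'(p)}$) and to identify the linear system defining $\varphi$ by the standard recipe: $\varphi^*|H| = |d\widetilde H - \sum m_i \widetilde E_i|$, where $d$ is the degree of the image of a general line under $\sigma_p$ and $m_i$ is the multiplicity of a general member of the homaloidal system along the $i$-th center that is blown up. Concretely I would first work on $X'=\mathrm{Bl}_p(X)$ and compute the strict transform there, then lift to $\widetilde X$.

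\textbf{Step 1 (degree $d=2$).} To find $d$, intersect the image of a general hyperplane section with a general line. Equivalently: through a general point $p$ and a general line $\ell\subset\PP^5$ not meeting $X$ in a controlled way, the preimage $\sigma_p^{-1}(\ell)$ — more precisely, pick a general point $x\in X$ and ask for the degree of $\sigma_p^*H$. A clean way: $\sigma_p$ is an involution, and on the cone of lines through $p$ it blows down; the residual-intersection description shows that a general hyperplane $H$ pulls back to a quadric hypersurface section relative to the projection from $p$. Indeed, writing $X$ with the node-at-$p$ normal form used above, if $H=\{\ell_1=0\}$ is a general hyperplane, then $\sigma_p^*H$ is cut out by the condition that the third intersection point of $\overline{px}$ with $X$ lies on $H$; eliminating the parameter $\lambda$ from the cubic equation restricted to $\overline{px}$ (whose roots are $0$, corresponding to $p$, and the two residual parameters) shows $\sigma_p^*H$ has degree $2$ in $x$ and vanishes to order $3$ at $p$ (the cubic has a double point there, and the extra order comes from the reflection swapping the two near-$p$ branches). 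This gives the $2\widetilde H - 3\widetilde E(p)$ part after passing to $X'$.

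\textbf{Step 2 (the coefficient of $\widetilde F(p)$).} On $X'$ the linear system $|2H'-3E'(p)|$ still has base locus along $S'(p)$ (the strict transform of the cone of lines through $p$), since every line through $p$ in $X$ is contracted by $\sigma_p$ and hence lies in the base locus of the homaloidal system. One computes the multiplicity of a general member along $S'(p)$: a general line $m\subset S(p)$ meets a general quadric-through-$p$-of-multiplicity-$3$ member in the expected way, and chasing the numbers (degree $2$, multiplicity $3$ at the vertex $p$ of the cone $S(p)$) forces multiplicity exactly $1$ along $S'(p)$. Blowing up $S'(p)$ to get $\widetilde X$ then subtracts $\widetilde F(p)$ with coefficient $1$, yielding $|2\widetilde H - 3\widetilde E(p) - \widetilde F(p)|$, and since $\varphi$ (equivalently $\widetilde\sigma_p$ followed by the structure map) is a morphism, this system is base-point free and defines $\varphi$.

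\textbf{Main obstacle.} The genuinely delicate point is Step 2: verifying that the multiplicity along $S'(p)$ is exactly $1$ and not higher, i.e.\ that after the two blow-ups the system is really base-point-free and not merely has its obvious fixed components removed. This requires understanding the scheme structure of the base locus of $|2H-3E'(p)|$ along the ruled surface $S'(p)$, using the local picture of $X'$ near $S'(p)$ (the normal bundle of $S'(p)$ in $X'$) and the intersection geometry with $\widetilde Y(p)$, $\widetilde E(p)$ recorded in Figure~\ref{figure1}. One can cross-check the final answer numerically: $\varphi$ should be the contraction realizing $\widetilde\sigma_p$ as an automorphism, so $(2\widetilde H-3\widetilde E(p)-\widetilde F(p))^4$ must equal $H^4=3$, and the class must restrict trivially on the fibers of the divisors contracted by $\varphi$; checking these intersection numbers against the blow-up formulas for $E'(p)\cong\PP^3$, $C(p)\subset\PP^3$ of bidegree $(3,3)$, pins down the coefficients and confirms there is no room for a larger multiple of $\widetilde F(p)$.
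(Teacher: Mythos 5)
Your overall strategy (normal form of $X$ at $p$, degree and multiplicity bookkeeping for the homaloidal system, then a correction term along $S'(p)$) is the same as the paper's, and your Step 1 is essentially the paper's argument: in coordinates where $p=(1:0:\dots:0)$ and the equation of $X$ is $X_1X_0^2+X_0q+c=0$, restricting to the line $\overline{px}$ and eliminating the parameter gives degree $2$ and multiplicity $3$ at $p$ (the paper phrases the multiplicity statement as: the map vanishes on every $2$-jet contained in $X$ and centered at $p$). However, you have correctly identified the delicate point and then not actually proved it. Your Step 2 asserts that ``chasing the numbers \dots forces multiplicity exactly $1$ along $S'(p)$'' and that the resulting system is base-point free, but no argument is given; the proposed cross-check via $(2\widetilde{H}-3\widetilde{E}(p)-\widetilde{F}(p))^4=3$ is only sketched and would itself require the full intersection theory of the two-step blow-up (including the bidegree-$(3,3)$ curve $C(p)$), which you do not carry out. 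Moreover, even granting the coefficients, you still need to know that $\varphi$ is given by the \emph{complete} linear system rather than a proper subsystem, which your outline does not address.

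The ingredient that closes this gap in the paper is the explicit formula for the reflection as a tuple of quadrics, $\sigma_p(X_0:\dots:X_5)=(X_0X_1+q(X_1,\dots,X_5),\,-X_1^2,\,-X_1X_2,\dots,-X_1X_5)$. With this in hand, one sees directly that the defining quadrics vanish along $S(p)=\{X_1=q=c=0\}$ and on all $2$-jets at $p$ (using $X_1\equiv -(q+c)$ modulo the equation of $X$), and one checks by an elementary computation that these quadrics span the space of \emph{all} quadrics on $X$ with those vanishing conditions. That single explicit formula simultaneously settles the exact multiplicity along $S'(p)$ and the completeness of the system, replacing the scheme-theoretic base-locus analysis you flag as the main obstacle. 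I would recommend you incorporate this formula (it is the content of \cite[Prop.\ 12.13]{Manin74} adapted to fourfolds) and restructure Step 2 around it.
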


\begin{proof}
We use a direct computation, building on the proof of \cite[Prop.\ 12.13]{Manin74}: we choose homogeneous coordinates $X_0, \ldots, X_{5}$ in $\PP^{5}$ such that $p= (1: 0: 0: \ldots : 0)$ and $X_1 =0$ is the equation of the projective embedded tangent hyperplane to $X$ at $p$. The equation of $X$ can then be written as
\[
X_1X_0^2 + X_0 q(X_1, \ldots, X_{5}) + c(X_1, \ldots, X_{5}) = 0,
\]
where $q$ is a homogeneous quadratic form and $c$ a homogeneous cubic form in the variables $X_1, \ldots , X_5$. Then the reflection $\sigma_p$ can be described as
\[
\sigma_p (X_0 : \ldots : X_{5}) = (X_0X_1 + q (X_1, \ldots , X_{5}), -X_1^2, -X_1X_2, \ldots , -X_1X_{5}).
\]
Here, as was said above, $\{ X_1 =0 \} = \mathbb{T}_pX$, and $\{ X_1=0, q=0 \} \subset \mathbb{T}_pX\simeq \PP^4$ defines the tangent cone at $p$ to the singular nodal cubic threefold $X \cap \mathbb{T}_pX$. It is a cone over the quadric $\{ X_1=0, X_0 =0, q=0 \} \simeq \PP^1\times \PP^1$ in the hyperplane $\PP^3_{\infty} = \{ X_1=0, X_0=0 \} \subset \PP^4$ at infinity. The curve $C(p) \subset \PP^1 \times \PP^1$ corresponding to lines contained in $X\cap \mathbb{T}_pX$ (resp.\ $X$) is defined by $\{ X_1=0, X_0 =0, q=0, c=0 \}$. It is of bidegree $(3,3)$ in $\PP^1 \times \PP^1$.

Now let $(1: \epsilon_1 : \ldots : \epsilon_5 )$ be a jet of order $k$ centered at $p$, i.e. an element of $\hat{\mathcal{O}}_{\PP^5, p}/ ($polynomials of degree $> k$ in local  coordinates $x_i=X_i/X_0$ centered at $p$). The residues classes of the $x_i$ are the $\epsilon_i$. The condition that a $1$-jet is contained in $X$ is $\epsilon_1=0$, and the condition that a $2$-jet is contained in $X$ is 
\[
\epsilon_1 =0,\;  q(0, \epsilon_2, \ldots , \epsilon_5 ) =0 .
\]
Hence $\sigma_p$ vanishes on every $2$-jet contained in $X$ and centered at $p$. This means that it is defined by a linear subsystem of $| 2\widetilde{H} - 3\widetilde{E}(p)|$ on $\widetilde{X}$. Moreover, clearly ${\sigma}_p$ is undefined on the surface $\{ X_1=0, q=0, c=0 \} = S(p)$ inside $X$ as well, so that $\widetilde{\sigma}_p$ is given by a linear subsystem of $|2 \widetilde{H} - 3\widetilde{E}(p) - \widetilde{F}|$. It is easy to check directly that the quadrics in the above formula for $\sigma_p$ generate the space of all quadrics on $X$ which vanish along $S(p)$ and contain all $2$-jets centered at $p$. 
\end{proof}

More generally, we have that $\widetilde{H}, \widetilde{E}(p), \widetilde{F}(p)$ is a basis of $\mathrm{Pic}(\widetilde{X})$, $\widetilde{Y}(p) \equiv \widetilde{H} - 2\widetilde{E}(p)-\widetilde{F}(p)$, and the automorphism induced by $\widetilde{\sigma}_p$ on $\mathrm{Pic}(\widetilde{X})$ can be represented in the preceding basis by the matrix

\[
\left( \begin{array}{ccc}
2 & 1 & 0 \\
-3 & -2 & 0\\
-1 & -1 & 1
\end{array}\right).
\]

\section{First examples of dynamical degrees of compositions of reflections}\xlabel{sVeryGeneral}

A useful metaphor for our study of the dynamics of composites of reflections on a smooth cubic fourfold $X$ may be  the subject of billiards, see e.g. \cite[Ch.\ 9]{KaHa95}. There generic orbits often display some form of ergodicity and confirm to a uniform pattern, whereas special orbits, e.g. periodic ones such as star-shaped closed inscribed polygons for circular billiards, may be interesting but harder to make general assertions about. Therefore, we first examine composites of reflections in very general collections of points in $X$ and then afterwards permit the points to attain some more special geometric configurations; however, only those configurations are of interest to us which are realizable on a \emph{very general cubic}, since this is the situation where we want to get a feeling for which tuples of dynamical degrees $(\lambda_1, \lambda_2, \lambda_3)\in \RR^3$ can occur.

\begin{theorem}\xlabel{tGeneral}
Let $N$ be a positive integer and let $\underline{p}=(p_1, \ldots , p_N) \in X^N$ be a very general $N$-tuple of points on a smooth cubic fourfold $X$. Let 
\[
\sigma_{\underline{p}} = \sigma_{p_N} \circ \ldots \circ \sigma_{p_1}
\]
be the associated composition of reflections, and \[ \lambda_{\underline{p}} = (\lambda_1 (\sigma_{\underline{p}} ), \lambda_2 (\sigma_{\underline{p}} ), \lambda_3 (\sigma_{\underline{p}} ) )\]
the associated triple of dynamical degrees (note that, clearly, $\lambda_0 (\sigma_{\underline{p}} )= \lambda_4 (\sigma_{\underline{p}} )=1$ are not interesting). Then the following holds:
\begin{itemize}
\item[(a)]
For all $N$, $\lambda_{\underline{p}}$ does not depend on $\underline{p}$, but only on $N$.
\item[(b)]
If $N=1$, we have \[ \lambda_{\underline{p}} = (1,1,1) \]
and for $N=2$ we also get
\[
 \lambda_{\underline{p}} = (1,1,1) .
\]
\item[(c)]
For $N \ge 3$ we have \[ \lambda_{\underline{p}} = (2^N, 2^N, 2^N) . \]
\end{itemize}
\end{theorem}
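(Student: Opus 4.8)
The plan is to reduce the computation to linear algebra on a Picard group of a common resolution. First I would construct a single smooth projective variety $\widetilde{X}_{\underline{p}}$ on which the composition $\sigma_{\underline{p}}$ lifts to an automorphism, by iterating the construction of Section \ref{sSingleReflection} at each of the points $p_1, \dots, p_N$: blow up $p_1$ and the corresponding ruled surface $S'(p_1)$, then the (strict transform of the) point $p_2$ and its associated surface, and so on. Because $\underline{p}$ is very general, the centers of all these blow-ups are disjoint and lie in general position, so the order does not matter up to isomorphism and the exceptional loci are mutually transverse; this is exactly the input needed for part (a), since the action on cohomology then depends only on the combinatorial data, i.e.\ on $N$. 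On this resolution $\mathrm{Pic}$ is freely generated by $\widetilde{H}$ together with the classes $\widetilde{E}(p_i), \widetilde{F}(p_i)$ for $i = 1,\dots,N$, a rank $2N+1$ lattice, and I would write down the matrix $M_{\underline p}$ of $\widetilde{\sigma}_{\underline p}^*$ in this basis as an ordered product of the single-reflection matrices of Section \ref{sSingleReflection} (suitably embedded in the big lattice, each acting on its own $3$-dimensional block built from $\widetilde H$ and the $i$-th pair), keeping careful track of how $\widetilde H$ is transformed by Proposition \ref{pLinearSystemReflection} as it passes through each reflection.

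Having the matrix $M_{\underline p}$, the first dynamical degree $\lambda_1(\sigma_{\underline p})$ is the spectral radius of $M_{\underline p}$ on $A^1$ — provided $\sigma_{\underline p}$ is algebraically $1$-stable on $\widetilde X_{\underline p}$, which I would verify via Lemma \ref{lDivisors}: since the map is an automorphism of $\widetilde X_{\underline p}$ it has empty indeterminacy locus there, so $1$-stability is automatic and $\lambda_1 = \rho_1(M_{\underml p})$. For $\lambda_3$, Lemma \ref{lInverse} gives $\lambda_3(\sigma_{\underline p}) = \lambda_1(\sigma_{\underline p}^{-1})$, and $\sigma_{\underline p}^{-1} = \sigma_{p_1}\circ\dots\circ\sigma_{p_N}$ is again a composition of reflections in a very general tuple, so it has the same $\lambda_1$ by part (a) applied to the reversed tuple; hence $\lambda_1 = \lambda_3$ for free. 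Then $\lambda_1 = \lambda_2 = \lambda_3$ will follow from the log-concavity inequality \eqref{log-concavity}: from $\lambda_2^2 \ge \lambda_1 \lambda_3 = \lambda_1^2$ and $\lambda_2 \le \max\{\lambda_1,\lambda_3\} = \lambda_1$ (the middle of a log-concave sequence is the largest, but the sequence $1,\lambda_1,\lambda_2,\lambda_3,1$ forces $\lambda_2 \ge \lambda_1$ only if $\lambda_1$ is not already the peak — more directly, log-concavity gives $\lambda_1^2 \ge \lambda_0\lambda_2 = \lambda_2$ and $\lambda_3^2 \ge \lambda_2\lambda_4 = \lambda_2$, while $\lambda_2^2 \ge \lambda_1\lambda_3$; combined with $\lambda_1 = \lambda_3$ these pin down $\lambda_1 = \lambda_2 = \lambda_3$). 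So everything comes down to computing one number: the spectral radius of $M_{\underline p}$.

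For part (b), when $N = 1$ or $N = 2$ I would compute $M_{\underline p}$ directly — for $N=1$ it is literally the $3\times 3$ matrix displayed at the end of Section \ref{sSingleReflection}, whose eigenvalues are roots of unity (it has finite order, as $\widetilde\sigma_p$ is an involution up to the obvious symmetry), giving spectral radius $1$; for $N = 2$ the $5\times 5$ product can be checked to still have spectral radius $1$ (the two reflections generate a group in which this composite is, up to conjugacy, not hyperbolic — concretely, one exhibits an invariant form or checks the characteristic polynomial is a product of cyclotomics). For part (c), the main work is to show that for $N \ge 3$ the spectral radius of $M_{\underline p}$ equals exactly $2^N$. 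I expect this to be the principal obstacle: one needs to identify a rank-one (or low-rank) $M_{\underline p}$-invariant subspace, or a nef class, on which the map acts by the scalar $2^N$, and simultaneously show no eigenvalue is larger. A clean way is to track the leading term of $(\sigma_{\underline p}^m)^* \widetilde H$: each passage through a reflection multiplies the "degree" (the $\widetilde H$-coefficient) by roughly $2$ because of the $2\widetilde H$ in Proposition \ref{pLinearSystemReflection}, and very-generality ensures there is no cancellation among the $-3\widetilde E(p_i) - \widetilde F(p_i)$ corrections across successive reflections (the relevant curves and surfaces are forced to meet the next center in the expected, maximal way), so the degree sequence grows like $(2^N)^m$ and one gets a matching eigenvector. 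Making the "no cancellation" step precise — showing that the strict transforms of the exceptional divisors created at $p_i$ meet the blow-up centers at $p_{i+1}, \dots$ transversally and in the generic number of points, for a very general tuple — is the delicate part, and I would prove it by a dimension count / incidence-variety argument over $X^N$, exactly as the phrase "very general" is designed to supply.
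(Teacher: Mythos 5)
Your proposal has a few genuine gaps, the most basic being the opening claim that $\sigma_{\underline p}$ lifts to an automorphism of the iterated blow-up of $X$ at the $p_i$ and the surfaces $S'(p_i)$. Each single reflection $\widetilde\sigma_{p_i}$ is an automorphism of its own two-step blow-up, but it does not preserve the other centers: $\sigma_{p_1}$ sends $p_2$ to the third intersection point of $\overline{p_1p_2}$ with $X$, so on your model the lift of $\sigma_{p_1}$ contracts the exceptional divisor over $p_2$ to a point that is not among your centers, and the composite is emphatically not an automorphism. Hence ``$1$-stability is automatic'' fails, and the spectral radius of $M_{\underline p}$ need not compute anything. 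The case $N=2$ shows this is not a technicality: the paper's model must adjoin the extra point $r=\sigma_p(q)\in\overline{pq}\cap X$, because the reflections then permute the six relevant divisors (the three exceptional ones over $p,q,r$ and the three tangent hyperplane sections), which is what gives $1$-stability; the resulting $4\times 4$ matrix has characteristic polynomial $(x-1)^4$, so $\lambda_{\underline p}=(1,1,1)$ even though $\sigma_q\sigma_p$ has degree $4$. In particular your guiding heuristic that very-generality prevents cancellation is false for $N=2$: the degrees of the iterates collapse for \emph{every} very general pair. For the same reason you cannot, for $N\ge 3$, simply assert ``no cancellation'' via a dimension count: $1$-stability on $X$ is a countable conjunction of conditions, one per iterate, and each is only known to cut out a proper closed subset of $X^N$ once a single configuration satisfying all of them is exhibited. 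The paper supplies this witness by placing the $p_i$ on a smooth plane section $E=\PP^2\cap X$, an elliptic curve preserved by all the reflections on which $\sigma_x(y)=-x-y$, and checking via the group law that the forward orbits of the contracted points never meet the indeterminacy loci; your proposal has no substitute for this step.

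A second, independent gap is the deduction of $\lambda_2$. From $\lambda_1=\lambda_3$ together with $\lambda_1^2\ge\lambda_0\lambda_2$, $\lambda_3^2\ge\lambda_2\lambda_4$ and $\lambda_2^2\ge\lambda_1\lambda_3$ you only obtain $\lambda_1\le\lambda_2\le\lambda_1^2$; this pins down $\lambda_2$ when $\lambda_1=1$ (so part (b) is fine), but for $N\ge 3$ the tuple $(1,2^N,2^{2N},2^N,1)$ satisfies all of your inequalities. The missing ingredient is the upper bound $\lambda_2\le 2^N$ from submultiplicativity of the second degree (each $\sigma_{p_i}$ has second Cremona degree $2$, so the second degree of $\sigma_{\underline p}^m$ is at most $2^{Nm}$), which the paper cites from \cite[Prop.\ 2.6]{Guedj10}. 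Your derivation of $\lambda_3=\lambda_1$ via Lemma \ref{lInverse} and the reversed tuple does match the paper's argument, but note that even there one must check that the two very general loci (for $\underline p$ and for its reversal) actually intersect.
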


\begin{proof}
Clearly, for $N=1$, $\lambda_{\underline{p}} = (1,1,1)$, since $\sigma_{p_1}$ is a single reflection and therefore is of finite order. 

If $N=2$, then we can assume that $p=p_1$ and $q=p_2$ do not lie on a line which is contained in $X$ since we assumed the tuple of points to be a very general one.  Let $r=p_3$ be the third intersection point of $\overline{p_1p_2}$ with $X$. Then blowing up $p_1, p_2, p_3$ yields a model $\widetilde X$ on which $\sigma_{\underline{p}}$ is algebraically $1$-stable by Lemma \ref{lDivisors}. Indeed, there are only six divisors on this blow-up which a priori could be contracted into the indeterminacy locus, namely the three exceptional ones and the three strict transforms of the tangent hyperplane sections in the points. But note that the lift of every $\sigma_{p_i}$ is defined in the generic point of each of these six divisors and permutes them. 

Let $H$ be the pull-back of a hyperplane section of $X$ to this blow-up $\widetilde{X}$, and let $P, Q, R$ be the corresponding exceptional divisors (isomorphic to $\PP^3$) lying over $p$, $q$, $r$ respectively. In the ordered basis $(H, P, Q, R)$ of $\mathrm{Pic}(\widetilde{X})$ the matrix of $\sigma_q \circ \sigma_p$ is equal to

\begin{gather*}
\begin{pmatrix}
4 & 2 &  0 & 1 \\
0 & 0 & 1 & 0 \\
-6 & -3 & 0  & -2 \\
-3 & -2 & 0 & 0
\end{pmatrix}.
\end{gather*}

Indeed, first of all note that $Q$ is first mapped to $R$ under $\sigma_p$, then onto $P$ under $\sigma_q$ (strictly speaking, we, of course, are talking about the lifts of the reflections). Similarly, $R$ is mapped onto $Q$ under $\sigma_p$ and so forth. This argument gives the third column of the matrix.

Now note that $H$ is transformed under $\sigma_p$ into $2H-3P$ (this follows as in the proof of Proposition \ref{pLinearSystemReflection}), and similarly for $\sigma_q$. Hence $\sigma_q\sigma_p(H)$ is just $2(2H-3Q)- 3R$. 

The exceptional divisor $P$ is transformed under $\sigma_p$ into the divisor which is the strict transform of the cubic $Y(p) = X \cap \mathbb{T}_p X$ on $\widetilde{X}$ (recall that the node at $p$ gets resolved by replacing it by a $\PP^1\times \PP^1$). On $\widetilde{X}$, the latter strict transform is equivalent to $H - 2P$, which under $\sigma_q$ is transformed into $2H- 3Q -2R$. 

Lastly, $R$ maps first to $Q$ under $\sigma_p$, then $\sigma_q$ maps $Q$ onto the strict transform of $Y(q)$ which is equivalent to $H - 2Q$. 

Combining all these arguments gives the above matrix, whose characteristic polynomial is $(x-1)^4$, hence $\lambda_1=1$ and, therefore, all $\lambda_i$ are equal to $1$ by Equation \ref{log-concavity}.
%Explanation: $H$ is transformed under $\sigma_p$ into $2H-3P$ and further under $\sigma_q$ into $2(2H-3Q)- 3R$. The exceptional divisor $P$ is transformed under $\sigma_p$ into the divisor which is the strict transform of the cubic $Y(p) = X \cap \mathbb{T}_p X$ on $\tilde{X}$ (the node at $p$ gets resolved by replacing it by a $\PP^1\times \PP^1$.  On $\tilde{X}$, this strict transform is equivalent to $H - 2P$, which under the lift of $\sigma_q$ is transformed into $2H- 3Q -2R$. $Q$ is first mapped to $R$ under $\sigma_p$, then onto $P$ under $\sigma_q$. $R$ maps first to $Q$ under $\sigma_p$, then (the lift of) $\sigma_q$ maps $Q$ onto the strict transform of $Y(q)$ which is equivalent to $H - 2Q$. 
%The characteristic polynomial of this matrix is $(x-1)^4$, hence $\lambda_1=1$ whence all $\lambda_i$ are equal to $1$ by log-convexity $\lambda_j^2 \ge \lambda_{j-1}\lambda_{j+1}$ (\cite{Guedj10}, Thm. 2.4 8a)).

We now turn to the general case $N\ge 3$. 

Notice that if $X$ itself is a model on which $\sigma_{\underline{p}}$ is $1$-stable, then the same will hold for all $\underline{q}$ in the complement of countably many proper subvarieties of $X^N$, i.e. for a very general $\underline{q}$: by Lemma \ref{lDivisors}, $1$-stability can be characterized geometrically by requiring that no iterate of $f$ contracts a divisor into the indeterminacy locus of $f$, and the contrary case can be expressed in terms of countably many algebraic equations for $\underline{q}$: this follows from Lemmas \ref{lAlgebraic1} and \ref{lAlgebraic2} below. Thus, in this case we get $\sigma_{\underline{q}}H=2^NH$ (compare the arguments above) for all such $\underline{q}$ and, therefore,
\[
\lambda_1 (\sigma_{\underline{q}}) = 2^N.
\]
Let $U \subset X^N$ be the complement of the countably many subvarieties one has to remove to characterize the set of $\underline{q}$; let $U'$ be the image of $U$ under the map reversing the factors in $X^N$, i.e. $(x_1, \ldots , x_N) \mapsto (x_N, \ldots , x_1)$. Then for all $\underline{q}'\in U'$
\[
\lambda_3 (\sigma_{\underline{q}'}) = 2^N
\]
since the $\sigma_{\underline{q}'}$ are nothing but the inverses of all the possible $\sigma_{\underline{q}}$. Here we used Lemma \ref{lInverse}. Since for very general $\underline{p}$ we have $U\cap U' \neq \emptyset$, it follows that
\[
\lambda_1 (\sigma_{\underline{p}}) = \lambda_3 (\sigma_{\underline{p}}) = 2^N.
\]
By Equation \ref{log-concavity}, $\lambda_2 (\sigma_{\underline{p}}) \ge 2^N$, but on the other hand, also $\lambda_2 (\sigma_{\underline{p}}) \le 2^N$ since the second (Cremona-)degree of each $\sigma_{p_i}$ is equal to $2$. Here we use the submultiplicativity of the degree, see \cite[Prop.\ 2.6]{Guedj10}. 

Now take a tuple of points $p_1, \dots , p_N$ in a general plane section $E= \PP^2 \cap X$ which is a smooth elliptic curve. Let us show that for this $\underline{p}$ the variety $X$ is already a model for which $\sigma_{\underline{p}}$ is algebraically $1$-stable. Indeed, the indeterminacy locus of $\sigma_{\underline{p}}$ intersected with $E$ is nothing but $p_1, \dots , p_N$ since $E$ is smooth and contains no lines as components (recall that, by Remark \ref{rIndeterminacylocus}, the indeterminacy loci of the $\sigma_{p_i}$ are precisely the lines through $p_i$). Moreover, for general $E$ and $p_1, \dots , p_N$ on $E$, no composition $\sigma_{p_j}\circ \dots \circ \sigma_{p_{i+1}}\mid_E$, $j\ge i+1$, maps $p_i$ to $p_{j+1}$. For instance, this will hold if $p_{i+1}$ is not in the subgroup generated by $p_1, \dots ,p_i$ for all $i$, which can be proven as follows: Notice that for points $x,y\in E$
\[
\sigma_x (y) = -x-y.
\]
Therefore, given $p_1, \dots , p_N$, we have
\begin{eqnarray*}
\sigma_{p_2}(p_1) &= -p_1-p_2\\
\sigma_{p_3}(-p_1-p_2) &= p_1+p_2 -p_3\\
\dots
\end{eqnarray*}
and we will now check that for $N$ even, this sequence never returns to $p_1$, whereas for $N$ odd, it does, but always after the application of a $\sigma_{p_1}$. Of course, since the situation is symmetric, it is indeed sufficient to consider the case of $p_1$.

If $N$ is even, after application of the first $\sigma_1$, the coefficient in front of $p_1$ is zero and remains zero until we apply $\sigma_1$ for the second time. Then the coefficient is $-1$. After the next application of $\sigma_1$, it is $-2$, after that $-3$ etc. 

If $N$ is odd, a direct computation, paying special attention to signs, shows that $p_1$ is mapped back to itself for the first time after applying
\[
\sigma_1\circ \left( \sigma_{N}\circ \dots \circ\sigma_1 \right) \circ \left( \sigma_N\circ\dots \circ\sigma_2\right) 
\]
and that no element on the way equals one of the $p_i$. 

 This finishes the proof of Theorem \ref{tGeneral}.   
\end{proof}

\begin{lemma}\xlabel{lAlgebraic1}
Let $Y \subset \PP^n$ be a smooth projective variety and let 
\[
\underline{F} = (F_0 (x_0, \ldots, x_n) , \ldots , F_N (x_0, \ldots , x_n))
\]
be an $N+1$-tuple of homogeneous polynomials $F_i \in \CC [x_0, \ldots , x_n]_d$ of the same degree $d$ representing a birational map $f \colon Y \dasharrow Y$ (hence, in particular, the $F_i$ do not vanish simultaneously on $Y$). Then the subset of those $\underline{F} \in \PP (\CC [x_0, \ldots , x_n]_d^{\oplus{n+1}})$ giving rise to birational $f$'s and such that $F_0=\ldots =F_N=0$ does not contain a codimension one algebraic subset of $Y$ form a locally closed subvariety  $\mathcal{P}_d$ of $\PP (\CC [x_0, \ldots , x_n]_d^{\oplus{n+1}})$. Those $\underline{F}$ such that in addition an iterate of $f$ contracts a purely one codimensional algebraic subset $Z \subset Y$ into the indeterminacy locus $I_f$ of $f$ form a countable union $\mathcal{Z}$ of closed algebraic subsets of $\mathcal{P}_d$. 
\end{lemma}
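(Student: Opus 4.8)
The plan is to parametrize everything by the obvious incidence varieties and then invoke semicontinuity/constructibility statements for families of morphisms and rational maps. First I would make precise the parameter space: a point of $\PP(\CC[x_0,\ldots,x_n]_d^{\oplus(n+1)})$ is a tuple $\underline F$ up to simultaneous scaling, and I want to carve out the locus of those $\underline F$ that (i) define a rational map $f\colon Y\dasharrow Y$, i.e.\ the $F_i$ do not vanish simultaneously on $Y$ and the image of the induced map lands in $Y$, and (ii) this $f$ is birational. Condition (i) on non-simultaneous vanishing is open, and the condition ``image lies in $Y$'' is closed (it says the equations of $Y$ vanish on the $F_i$ modulo the ideal of $Y$), so together they cut out a locally closed set; birationality of $f$ is then a constructible condition on this set by standard results on families (one can, for instance, spread out and use that ``the generic fibre of $\Gamma_f\to Y$ on the second projection is a single reduced point'' is constructible). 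Intersecting with the further open condition that $\{F_0=\cdots=F_N=0\}\cap Y$ has codimension $\ge 2$ in $Y$ — semicontinuity of fibre dimension — gives the locally closed subvariety $\mathcal P_d$.

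Next I would handle the countable-union assertion. Fix an integer $k\ge 1$ and ask for those $\underline F\in\mathcal P_d$ such that the $k$-th iterate $f^k$ contracts some pure codimension-one $Z\subset Y$ into the indeterminacy locus $I_f$. The key point is that over $\mathcal P_d$ one has a family of graphs: $\Gamma_{f^k}\subset Y\times Y\times \mathcal P_d$, and the indeterminacy locus $I_{f^k}$ together with $I_f$ vary in algebraic families (they are the images under the projections of the loci where the relevant resolutions are non-isomorphisms, again constructible). The statement ``$f^k$ contracts a divisor into $I_f$'' is equivalent, by the geometric description of $f[Z]$ and $f\{Z\}$ recalled in Section 2 and by Lemma \ref{lDivisors}, to the assertion that the projection $p_1\colon \Gamma_{f^k}\to Y$ has a fibre of dimension $\ge 1$ over a generic point of some divisor $Z$ whose image $p_2(p_1^{-1}(Z))$ lies in $I_f$; equivalently, that $(f^{-k})^{-1}$ — i.e.\ $f^k$ read backwards — is not defined in codimension one on the image divisor, with that image contained in $I_f$. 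For each fixed $k$ this is a finite conjunction of conditions each expressible as the non-vanishing or vanishing of suitable determinantal/resultant expressions in the coefficients of $\underline F$ (the coefficients of the $F_i^{(k)}$ obtained by iterated substitution are polynomials in those of $\underline F$), so it defines a closed algebraic subset $\mathcal Z_k\subset\mathcal P_d$; then $\mathcal Z=\bigcup_{k\ge 1}\mathcal Z_k$ is the asserted countable union of closed algebraic subsets.

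I would organize the write-up around two lemmas of independent interest: (A) constructibility of the birationality locus and of the locus where the base locus has codimension $\ge 2$, which is classical and can be cited or proved in a line using generic flatness and semicontinuity; and (B) for fixed $k$, the locus of $\underline F$ where $f^k$ contracts a divisor into $I_f$ is closed, which is where the actual content is. For (B) the cleanest route is to exhibit, over $\mathcal P_d$, the relative graph $\Gamma\to\mathcal P_d$ of $f^k$ (its total space is cut out in $Y\times Y\times\mathcal P_d$ by bihomogeneous equations with coefficients polynomial in those of $\underline F$, after clearing common factors — and the ``clearing common factors'' is exactly what one must be careful about), then observe that ``$p_1$ contracts a divisor whose $p_2$-image lies in $I_f$'' is a closed condition on the base by upper semicontinuity of fibre dimension applied to $p_1$ restricted to the preimage of the family $I_f$.

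The main obstacle I anticipate is not conceptual but bookkeeping: the passage from ``$\underline F$'' to ``$f^k$'' requires composing rational maps, which means substituting $F_i$ into $F_j$ and then removing the greatest common factor of the resulting polynomials on $Y$ to get the genuine minimal representative of $f^k$; the degree of this gcd jumps on closed subsets, so strictly speaking one gets a stratification of $\mathcal P_d$ (refining the countable union) on each stratum of which $f^k$ has a uniform representation. One must check that this stratification is by constructible sets and that the contraction condition remains (relatively) closed on each stratum, so that its closure in $\mathcal P_d$ is still a finite union of algebraic subsets — whence the total over all $k$ is a countable union, as claimed. This is routine but needs to be stated carefully; everything else follows from standard semicontinuity and constructibility together with the dictionary between algebraic $1$-stability and the non-contraction of divisors into $I_f$ furnished by Lemma \ref{lDivisors}.
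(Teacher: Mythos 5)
Your treatment of the first assertion (constructibility of $\mathcal{P}_d$) matches the paper in substance; the paper simply delegates it to an argument analogous to \cite[Prop.\ 2.4 \& 2.5]{BBB14}. For the second assertion your route is genuinely different from the paper's, and the difference matters. The paper never forms the graph of $f^k$ and never clears common factors: it works with the \emph{naive} iterated substitution $\underline{F}^{i+1}$, whose coefficients are honest polynomials in those of $\underline{F}$, and observes that $\underline{F}^{i+1}$ vanishing identically on a divisor $Z$ is exactly the condition that some partial iterate sends $Z$ into the base locus. It then parametrizes the candidate divisors $Z$ by the (countably many, projective) components $H$ of the Hilbert scheme of pure codimension-one subschemes, forms the closed incidence variety $\mathcal{Y}_i\subset\mathcal{H}\times\mathcal{P}_d$ where $\underline{F}^{i+1}$ vanishes, and concludes by upper semicontinuity of fibre dimension for the proper projection to $H\times\mathcal{P}_d$ followed by the proper projection to $\mathcal{P}_d$. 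This is precisely the device that makes the ``main obstacle'' you identify evaporate: no gcd bookkeeping, no stratification.

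Two points in your version are genuine gaps rather than bookkeeping. First, the contraction condition is attached to the wrong projection: if $f^k$ contracts a divisor $Z$ into $I_f$, the generic fibre of $p_1$ over $Z$ is still a point (a divisor cannot lie in the indeterminacy locus); what is positive-dimensional is the generic fibre of $p_2$ restricted to the strict transform of $Z$ in $\Gamma_{f^k}$, equivalently the image $p_2(p_1^{-1}(Z))$ has dimension $<\dim Z$. Second, and more seriously, your stratify-then-take-closures step does not deliver the statement: the lemma asserts that the bad locus itself is a countable union of \emph{closed} algebraic subsets of $\mathcal{P}_d$, whereas replacing a constructible stratum by its closure can only produce a closed set \emph{containing} the bad locus (a locally closed set is in general not a countable union of closed sets, so you cannot recover equality afterwards). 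For the application in Theorem \ref{tGeneral} an over-estimate by proper closed subsets would suffice, but as a proof of the lemma as stated the argument is incomplete; you would need to show directly (as the paper does, via the un-reduced composite and the Hilbert scheme) that for each $k$ and each Hilbert component the relevant locus is already closed.
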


\begin{proof}
The fact that the $\underline{F} \in \PP (\CC [x_0, \ldots , x_n]_d^{\oplus{n+1}})$ giving rise to birational $f$'s and such that $F_0=\dots =F_N=0$ does not contain a codimension one algebraic subset of $Y$ form a locally closed subvariety $\mathcal{P}_d$ of $\PP (\CC [x_0, \ldots , x_n]_d^{\oplus{n+1}})$ can be  proven analogously to \cite[Prop.\ 2.4 \& 2.5]{BBB14}.  

Let us show the second assertion. Let $H$ be one of the countably many components of finite type of the Hilbert scheme parametrizing purely one codimensional subschemes of $Y$, and let 
\[
\pi\colon \mathcal{H} \subset H \times \PP^N \to H
\]
be the universal family.
Let
\[
\mathcal{Y}_i =\left\{ (h,x,\underline{F}^{i+1}) \in \mathcal{H}\times\mathcal{P}_d\subset H\times\PP^n\times \mathcal{P}_d \mid \underline{F}^{i+1}(x)=0 \right\}
\]
and consider the natural projection
\[
\alpha\colon\mathcal{Y}_i \to H \times\mathcal{P}_d .
\]
By upper semi-continuity of fiber dimension of $\alpha$, the set
\[
\mathcal{X}_i =\{ z=(h, \underline{F}) \subset H \times \mathcal{P}_d \mid \dim \alpha^{-1}(z)\ge \dim X -1\} 
\]
is closed in $H \times \mathcal{P}_d$ (note that we use that the $F_j$ do not vanish on a common codimension one subset). The projection $p\colon H \times \mathcal{P}_d  \to \mathcal{P}_d$ is proper, hence $p(\mathcal{X}_i)\subset \mathcal{P}_d$ is closed. Taking the union over all $i$ and the countably many components of the Hilbert scheme gives us the description of the subset $\mathcal{Z}$ of $\mathcal{P}_d$ as claimed. 
\end{proof}

\begin{lemma}\xlabel{lAlgebraic2}
Let $X$ be a smooth cubic fourfold, $p_1, \ldots , p_N$ a tuple of points in $X$ as before. Then, using the notation of the preceding lemma, there is a $d\in \NN$, an open neighborhood $\Omega$ of $(p_1, \ldots , p_N)$ in $X^N$ and a morphism
\[
s\colon \Omega \to \mathcal{P}_d
\]
such that $s(q_1, \ldots , q_N)$ represents the composite of reflections $\sigma_{\underline{q}} = \sigma_{q_N} \circ \ldots \circ \sigma_{q_1}$. 
\end{lemma}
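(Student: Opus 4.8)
The plan is to make the reflection $\sigma_p$ depend polynomially on the point $p$ by writing it down as an explicit tuple of forms, so that the composite $\sigma_{\underline q}$ is obtained by iterated substitution and is therefore automatically polynomial in $\underline q$; the degree $d$ will turn out to be $2^N$.

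Let $F\in\CC[x_0,\dots,x_5]_3$ be a cubic form defining $X$. For $p,x\in\PP^5$ the restriction of $F$ to the line through $p$ and $x$ is
\[
F(sp+tx)=s^3F(p)+s^2t\,\Phi(p,x)+st^2\,\Psi(p,x)+t^3F(x),
\]
where $\Phi(p,x)=\sum_i(\partial_iF)(p)\,x_i$ is bihomogeneous of bidegree $(2,1)$ and $\Psi(p,x)=\tfrac12\sum_{i,j}(\partial_i\partial_jF)(p)\,x_ix_j$ of bidegree $(1,2)$ in $(p,x)$. For $p,x\in X$ the extreme terms vanish, so the third point of $\overline{px}\cap X$ is
\[
\sigma_p(x)=\bigl(\Psi(p,x)\,p_0-\Phi(p,x)\,x_0:\ \dots\ :\ \Psi(p,x)\,p_5-\Phi(p,x)\,x_5\bigr),
\]
and in coordinates adapted to $p$ as in the proof of Proposition~\ref{pLinearSystemReflection} this recovers the formula for $\sigma_p$ given there. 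Thus $(p,x)\mapsto\sigma_p(x)$ is given by six forms which are bihomogeneous of bidegree $(2,2)$ in $(p,x)$.

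Substituting the six forms for $\sigma_{q_1}$ into those for $\sigma_{q_2}$, then into those for $\sigma_{q_3}$, and so on, a straightforward induction on the degree in the point variable shows that $\sigma_{\underline q}=\sigma_{q_N}\circ\dots\circ\sigma_{q_1}$ is represented by six forms $F^{(N)}_0(\underline q,x),\dots,F^{(N)}_5(\underline q,x)$ of degree $2^N$ in $x$ whose coefficients are polynomials in $\underline q=(q_1,\dots,q_N)$. Set $d:=2^N$. Then $\underline q\mapsto[F^{(N)}_0(\underline q,\cdot):\dots:F^{(N)}_5(\underline q,\cdot)]$ is a rational map $X^N\dasharrow\PP(\CC[x_0,\dots,x_5]_d^{\oplus6})$, regular off the closed locus where all six forms vanish identically in $x$. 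At $\underline q=(p_1,\dots,p_N)$ these forms represent the honest birational map $\sigma_{\underline p}$, so they are not all zero; hence the rational map is a morphism $s$ on some Zariski-open $\Omega\ni(p_1,\dots,p_N)$.

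It remains to check that, after shrinking $\Omega$ if necessary, $s(\Omega)\subseteq\mathcal P_d$; this is the main obstacle. For $\underline q\in\Omega$ close to $\underline p$ the forms $F^{(N)}_k(\underline q,\cdot)$ restrict on $X$ to a representative of $\sigma_{\underline q}$, a composite of birational maps and hence birational, so the delicate point is that they have no common factor vanishing on a divisor of $X$, equivalently that their common zero locus on $X$ has codimension $\ge2$. For a single reflection this common factor is $\gcd(\Psi(p,\cdot),\Phi(p,\cdot))=1$, since under the standing assumption that $Y(p)$ has an ordinary node at $p$ the quadric $\Psi(p,\cdot)$ is not divisible by the linear form $\Phi(p,\cdot)$ cutting out $\mathbb{T}_pX$; thus $\sigma_p$ has base locus exactly the surface $S(p)$, of codimension $2$. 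For the composite one argues inductively, writing its base locus as $S(q_1)$ together with the $\sigma_{q_1}^{-1}$-preimage of the base locus of $\sigma_{q_N}\circ\dots\circ\sigma_{q_2}$, and observing that this stays of codimension $\ge2$ because the exceptional set of each $\sigma_{q_i}$ (the union of the images of the divisors it contracts) is a finite union of subvarieties of dimension $\le2$, and the surfaces $S(q_i)$ remain pairwise distinct near $\underline p$; this is the geometry underlying Lemma~\ref{lDivisors}. If for a degenerate configuration of the $p_i$ some $F^{(N)}_k$ nonetheless acquire an extraneous common factor, one divides out the content over the (integral, hence UFD) local ring of $X^N$ at $\underline p$ via Gauss's lemma and, after one further shrinking of $\Omega$, lands in $\mathcal P_{d'}$ with $d'<2^N$. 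In all cases this produces the required morphism, which finishes the proof.
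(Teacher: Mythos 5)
Your proof is correct and takes essentially the same route as the paper, which likewise reduces everything to the explicit quadratic formula for a single reflection (your coordinate-free expression $\Psi(p,x)\,p_i-\Phi(p,x)\,x_i$ recovers exactly the formula in the proof of Proposition \ref{pLinearSystemReflection}) and then composes; the paper's own proof is just this reduction stated in one line. Your extra care about landing in $\mathcal{P}_d$ is welcome and is sound for the configurations actually used (no two of the $p_i$ on a line of $X$, so no contracted divisor falls into a subsequent base locus and the common zero locus stays of codimension $\ge 2$), though the closing Gauss's-lemma fallback should be viewed with caution: the homogeneous coordinate ring of a cubic fourfold is not a UFD, so a divisorial common component on $X$ need not be a common polynomial factor, and removing it in a family over $\Omega$ is not just dividing by a content.
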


\begin{proof}
It suffices to do the proof for a single point $p_1$ and then it is a direct calculation as in the proof of Proposition \ref{pLinearSystemReflection}. 
\end{proof}

We retain the notation of the preceding section and, in particular, of Theorem \ref{tGeneral}. 
Having settled the generic situation, we pass on to some more special configurations:

\begin{proposition}\xlabel{pPointsOnLines}
Let $p_1, p_2$ be points in $X$ such that $\overline{p_1p_2}$ is contained in $X$. Then 
\[
\lambda_{\underline{p}} = (1,1,1).
\]
\end{proposition}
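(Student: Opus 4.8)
The plan is to show that when $p_1,p_2$ lie on a line $\ell\subset X$, the composite $\sigma_{p_2}\sigma_{p_1}$ preserves a natural fibration and hence has trivial dynamical degrees. The key observation is that the reflection $\sigma_{p_i}$ in a point $p_i\in\ell$ sends $\ell$ to itself: a line $m$ through a general point $x$ meets $X$ in a third point $\sigma_{p_i}(x)$, and when $x\in\ell$ one may take $m=\ell$, so $\sigma_{p_i}|_\ell$ is the (involutive) ``third-point'' map on the line $\ell$. More importantly, recall from Section~\ref{sPreliminaries} that blowing up the line $\ell$ gives a conic fibration $\pi_\ell\colon X_\ell=\mathrm{Bl}_\ell(X)\to\PP^2$; I claim that both $\sigma_{p_1}$ and $\sigma_{p_2}$, and hence their composite, preserve this fibration. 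Indeed, the fibers of $\pi_\ell$ are (strict transforms of) the residual conics in the plane sections of $X$ containing $\ell$; since $\sigma_{p_i}$ is defined by projecting away from lines through $p_i$ and $p_i\in\ell$, it carries the plane spanned by $\ell$ and any point $x$ to itself, hence carries the corresponding plane section of $X$ to itself, hence permutes the conic fibers. So $\sigma_{p_2}\sigma_{p_1}$ descends to a birational self-map $\overline{g}$ of $\PP^2$.

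Next I would identify $\overline{g}$ and check it has $\lambda_1(\overline{g})=1$. The base $\PP^2$ of $\pi_\ell$ is naturally the $\PP^2$ of planes through $\ell$, equivalently $\PP(\CC^6/\langle\ell\rangle)$, i.e.\ projection from the line $\ell$. Since $\sigma_{p_i}$ is linear on the pencil of planes through $\ell$ (it fixes $\ell$ and acts on the ambient $\PP^5$ by the quadratic formula, which restricted to planes containing $\ell$ induces a projective-linear, indeed trivial or involutive, action on the base), the induced map $\overline{\sigma}_{p_i}$ on $\PP^2$ is an automorphism (in fact I expect it is the identity, since $\sigma_{p_i}$ fixes each plane through $p_i$, and every plane through $\ell$ passes through $p_i$). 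Therefore $\overline{g}=\overline{\sigma}_{p_2}\,\overline{\sigma}_{p_1}$ is a linear automorphism of $\PP^2$, so $\lambda_k(\overline{g})=1$ for all $k$. Applying the formula~\eqref{degrees-fibration} from \cite[Thm.\ 1.1]{DinhNgu11}, namely $\lambda_k(g)=\max\{\lambda_k(\overline{g}),\lambda_{k-1}(\overline{g})\}$, we conclude $\lambda_k(\sigma_{p_2}\sigma_{p_1})=1$ for $k=1,2,3$, i.e.\ $\lambda_{\underline p}=(1,1,1)$.

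I expect the main obstacle to be the bookkeeping needed to make ``$\sigma_{p_i}$ preserves the conic fibration $\pi_\ell$'' genuinely rigorous, and in particular to verify the induced map on the base is an automorphism rather than merely a birational map — one must be careful that the indeterminacy locus of $\sigma_{p_i}$ (the cone of lines through $p_i$ in $X$) interacts well with the fibration, so that after passing to $X_\ell$ the lifted map $\sigma_{p_i}$ is fiberwise defined over a dense open of $\PP^2$ and the descended map extends to an automorphism. Concretely I would set up coordinates as in the proof of Proposition~\ref{pLinearSystemReflection}, with $\ell=\{X_2=\cdots=X_5=0\}$ a line through $p=(1:0:\cdots:0)$ contained in $X$, write the reflection formula explicitly, and read off that the last four coordinates (which cut out the fibration after blow-up) transform among themselves up to scaling, giving the desired descent; the $\lambda_k(\overline g)=1$ claim then follows because the descended transformation is manifestly linear. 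An alternative, perhaps cleaner, route avoiding \cite{DinhNgu11}: blow up $\ell$ and the two or three relevant exceptional/tangent divisors to obtain a model on which $\sigma_{p_2}\sigma_{p_1}$ is algebraically $1$-stable (as in the $N=2$ case of Theorem~\ref{tGeneral}), then compute the action on $\mathrm{Pic}$ directly and check its spectral radius is $1$; but the fibration argument is more conceptual and I would present that as the main proof.
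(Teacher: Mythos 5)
Your argument is essentially the paper's own proof: one observes that $\sigma_{p_1}$ and $\sigma_{p_2}$ each preserve the conic fibration obtained by projecting from the line $\overline{p_1p_2}$ (since the line $\overline{p_ix}$ lies in the plane spanned by $\overline{p_1p_2}$ and $x$), so the composite is a lift of the identity on the base and \eqref{degrees-fibration} gives $\lambda_k=1$ for all $k$. One small correction: for a cubic fourfold in $\PP^5$ the base of this fibration is $\PP^3$ (the space of planes containing the line, i.e.\ $\PP(\CC^6/\langle \ell\rangle)\cong\PP^3$, consistent with the $\PP^{n-1}$ of Section \ref{sPreliminaries}), not $\PP^2$; this slip does not affect the argument.
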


\begin{proof}
It suffices to notice that $\sigma_{\underline{p}}$ is a lift of the identity map on $\PP^3$ along the conic fibration $X \dasharrow \PP^3$ given by projecting from $\overline{p_1p_2}$. Hence, the assertion follows by Equation \ref{degrees-fibration}.
\end{proof}

\section{A conic and a line}\xlabel{sConicLine}

We now begin with a discussion of a special configuration of points and the computation of dynamical degrees in this case. It is one of the first instances where more interesting dynamics arises. 

\subsection{The computational approach}\xlabel{ssComputation}

Choose a plane $\PP^2$ in such a way that 
\[X\cap \PP^2=L\cup C,\]
where $L$ is a line and $C$ is a conic intersecting the line transversely. Let $p, q, r$ be three points on $L\cup C$ with $p,q\in L$, $r\in C$. Let $a, b$ be the two intersection points of $C$ and $L$, and assume that neither of them coincides with $p,q$ or $r$. The rough picture is shown in Figure 2.
\setlength{\unitlength}{1cm}

\begin{figure}\label{fBasicConfiguration}
\caption{}
\begin{picture}(3,3)
\put(-1,2){$\bullet$}
\put(-1,1.6){$p$}

\put(4,2){$\bullet$}
\put(4,1.6){$q$}

\put(-2,2.1){\line(1,0){8}}
\put(2.6,2.2){$L$}

\put(1.4,1){$\bullet$}
\put(1.4, 0.6){$r$}

\put(1,1.6){\circle{6}}
\put(0,1){$C$}
\end{picture}
\end{figure}

We want to compute the first and third dynamical degrees of 
\[
g = \sigma_r\circ\sigma_q\circ\sigma_p .
\]

We introduce some notation useful for the sequel: We will write $p_0=p, p_1=q, p_2=r$ occasionally when convenient for indexing purposes, $\TT_i =\TT_{p_i}X \cap X$ for the tangent hyperplane sections, $\mathcal{L}_i$ for the surface of lines on $X$ through $p_i$. Note that in the notation of Section \ref{sSingleReflection}, $\mathbb{T}_i= Y(p_i)$ and $\mathcal{L}_i = S(p_i)$.

\smallskip

Since computing $\lambda_1 (g)$ and $\lambda_3 (g)$ even in this at first glance comparatively harmless case involves a lot of technical details and auxiliary considerations, let us outline first of all the general method we will be pursuing. We will compute $\lambda_3 (g)$ and then deduce $\lambda_1 (g)$ by using the symmetries of the situation.

\noindent \textbf{Step 1.} We start with a very general curve $\Gamma\subset X$ which is the intersection of three members of a very ample linear system on $X$. Now 
\[
\lambda_3 (g) = \liminf_{n\to\infty} (\mathrm{deg} ((g^n)^* (\Gamma ))^{\frac{1}{n}}
\]
where $\mathrm{deg} ((g^n)^* (\Gamma )$ denotes the degree of the birational transforms of $\Gamma$ under $g^n$ with respect to the chosen very ample linear system. Our approach is elementary inasmuch as it aims at computing the degrees of these birational transforms directly, and then we will determine their exponential growth rate, which gives $\lambda_3 (g)$, after that.

However, for these computations to work, we need several genericity assumptions to hold for $\Gamma$, and the hardest part of the computation consists in showing that the set of $\Gamma$ satisfying all of them is actually not-empty. 

\smallskip

\noindent \textbf{Step 2.} Let us explain how we will compute $x_{\nu}(1):=d_{\nu}:=\deg (g^{\nu})^* (\Gamma)$ for $0 \le \nu < \infty$ and determine the asymptotic growth rate of them. Apart from the degrees of the birational transforms of $\Gamma$ we will also consider some auxiliary integers $x_{\nu} (2), \dots , x_{\nu} (r)$ that capture the salient features of the state of the discrete dynamical system generated by $g$ at time $\nu$, starting from a $\Gamma \in (H)^3$, sufficiently well so as to determine the set of integers
\[
(x_{\nu +1} (1), \dots , x_{\nu +1} (r))
\]
for the next time moment $\nu+1$. For example, the $x_{\nu} (2), \dots , x_{\nu} (r)$ may encode some multiplicities, number of certain points lying on distinguished loci, etc., at time $\nu$. The main point is that, if we introduce an integer vector
\[
v_{\nu} := (x_{\nu } (1), \dots , x_{\nu } (r))^t \in \ZZ^r, 
\]
then the transition from one state of the system to the next will be affected by a linear transformation $A\colon \ZZ^r \to \ZZ^r$ 
\[
v_{\nu +1} = A v_{\nu} .
\]
Moreover, usually $x_0(2) = \dots = x_0(r) =0$, i.e.  we start with
\[
v_{0} := (d_{0}, 0,  \dots , 0)^t .
\]

\begin{lemma}\xlabel{lMatrixEigenvalues}
In the above set-up, suppose that $A$ has eigenvalues $\mu_1, \dots , \mu_r$ with a multiplicity one positive real eigenvalue $\mu_1$ of maximum absolute value, and assume the eigenvalues are  ordered such that 
\[
\mu_1=|\mu_1 | > | \mu_2 | \ge \dots \ge | \mu_r| .
\]
Suppose that $v_0$ or, equivalently, the vector $e_1=(1,0,0, \dots , 0)$ is not in the span of the eigenspaces for $\mu_2, \dots , \mu_r$, and that the eigenspace for $\mu_1$ is not in the span of the vectors $e_2, \dots , e_r$ of the standard basis. Then the third dynamical degree equals $\mu_1$. 
\end{lemma}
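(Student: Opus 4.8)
The plan is to reduce the statement to a standard fact about iterating a linear map and reading off its growth rate, and then to check that the hypotheses of that fact hold under the assumptions made on $A$, $v_0$ and the coordinate functional $x_\nu(1)$. Recall from Step~1 that $\lambda_3(g) = \liminf_{n\to\infty} \bigl(\deg (g^n)^*(\Gamma)\bigr)^{1/n} = \liminf_{n\to\infty} x_n(1)^{1/n}$, and that by construction $x_n(1)$ is the first coordinate of $v_n = A^n v_0$. So it suffices to prove that $\lim_{n\to\infty} \bigl(e_1^t A^n v_0\bigr)^{1/n} = \mu_1$ (the $\liminf$ will in fact be a genuine limit).

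First I would diagonalize, or rather decompose, the action of $A$ on $\CC^r$ via its Jordan form. Write $v_0 = \sum_{j} c_j w_j + (\text{generalized parts for } \mu_2,\dots)$, where $w_1$ spans the (one-dimensional, by the multiplicity-one hypothesis) eigenspace $E_{\mu_1}$. The hypothesis that $v_0$ is not in the span of the eigenspaces/generalized eigenspaces for $\mu_2,\dots,\mu_r$ says precisely that the $E_{\mu_1}$-component of $v_0$ is nonzero, i.e. $c_1 \neq 0$. Then $A^n v_0 = c_1 \mu_1^n w_1 + R_n$, where $R_n$ is a sum of terms of the form $(\text{polynomial in } n)\cdot \mu_j^n$ with $|\mu_j| \le |\mu_2| < \mu_1$; hence $\|R_n\| = o\bigl((\mu_1 - \varepsilon)^{-n}\mu_1^n\bigr)$ — more simply, $\|R_n\|/\mu_1^n \to 0$. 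Applying the functional $e_1^t$ gives
\[
e_1^t A^n v_0 = c_1 \mu_1^n \,(e_1^t w_1) + e_1^t R_n .
\]
The second hypothesis — that $E_{\mu_1}$ is not contained in the span of $e_2,\dots,e_r$ — says exactly that $w_1$ has nonzero first coordinate, i.e. $e_1^t w_1 \neq 0$. Therefore $e_1^t A^n v_0 = c_1(e_1^t w_1)\mu_1^n\bigl(1 + o(1)\bigr)$, with $c_1(e_1^t w_1) \neq 0$ a fixed nonzero constant, so $\bigl(e_1^t A^n v_0\bigr)^{1/n} \to \mu_1$ as $n\to\infty$. Since this is a limit, the $\liminf$ defining $\lambda_3(g)$ equals $\mu_1$, which is the assertion.

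The only genuinely delicate point is bookkeeping the contribution of non-dominant eigenvalues when they have nontrivial Jordan blocks: one must note that a term $n^k|\mu_j|^n$ with $|\mu_j| < \mu_1$ still satisfies $n^k|\mu_j|^n/\mu_1^n \to 0$, so the Jordan structure of the subdominant part is harmless. A minor subtlety worth a remark is sign/complex cancellation: a priori $e_1^t A^n v_0$ could be small for infinitely many $n$ if the dominant term vanished, which is exactly why the two non-degeneracy hypotheses ($c_1\neq 0$ and $e_1^t w_1 \neq 0$) are imposed — with them, the dominant term never cancels and the limit is clean. I expect the main obstacle in writing this up cleanly is simply phrasing the two hypotheses in terms of the Jordan/eigenspace decomposition transparently enough that the reader sees they are equivalent to $c_1 \neq 0$ and $e_1^t w_1 \neq 0$; the analytic estimate itself is routine.
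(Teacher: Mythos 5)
Your proof is correct and follows essentially the same route as the paper: decompose $v_0$ along the eigenstructure of $A$, use the two non-degeneracy hypotheses to see that the coefficient of the dominant term $\mu_1^n$ in the first coordinate of $A^n v_0$ is nonzero, and take $n$-th roots. The only difference is that you work with the Jordan form and explicitly control the polynomial factors $n^k|\mu_j|^n$ from subdominant blocks, whereas the paper simply writes $(BAB^{-1})^n=\mathrm{diag}(\mu_1^n,\dots,\mu_r^n)$, tacitly assuming diagonalizability; your version is the marginally more careful one on that point.
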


\begin{proof}
The third dynamical degree is the exponential growth rate in $n$ of the first entry in the vector
\[
A^n v_0 . 
\]
Let $B$ be the base change matrix from the standard basis to the eigenbasis of $A$. Then we can rewrite
\[
A^n v_0 = B^{-1}(BAB^{-1})^n (Bv_0)
\]
and, since $v_0$ is not in the span of the eigenspaces for $\mu_2, \dots , \mu_r$, $Bv_0 = (b_1 , \dots , b_r)^t \cdot d_0$ with $b_i\in \CC$, $b_1\neq 0$. Moreover, since 
\[
(BAB^{-1})^n = \mathrm{diag}( \mu_1^n, \mu_2^n, \dots , \mu_r^n)
\]
and the eigenspace for $\mu_1$ is not in the span of the vectors $e_2, \dots , e_r$ of the standard basis, in other words, the $(1,1)$-entry of $B^{-1}$ is nonzero, we get that the first entry in $A^n v_0$ can be written as
\[
(c_1 \mu_1^n + c_2 \mu_2^n + \dots + c_r \mu_r^n )\cdot d_0, \quad c_i \in \CC, c_1 \neq 0. 
\]
Hence
\begin{gather*}
\lim\limits_{n\to\infty} \left( (c_1 \mu_1^n + c_2 \mu_2^n + \dots + c_r \mu_r^n )\cdot d_0 \right)^{\frac{1}{n}} \\
= \mu_1  \lim\limits_{n\to\infty} \left( \left( c_1 + c_2 \left( \frac{\mu_2}{\mu_1}\right)^n + \dots + c_r \left( \frac{\mu_r}{\mu_1}\right)^n \right) \cdot d_0 \right)^{\frac{1}{n}} = \mu_1.
\end{gather*}
\end{proof}

\smallskip

\noindent \textbf{Step 3.} The genericity properties which we need to compute the degree of $(g^{\nu})^* (\Gamma)$, typically fall into two categories. Firstly, we need that for  very general choice of $X, L, C, p, q, r$, and $Z$ a tangent divisor $\mathbb{T}_i$, the backward in time move $M_{i \shortrightarrow k}[Z]$ does not coincide with a tangent divisor $\mathbb{T}_{k-1}$, for any $k < i$. Equivalently, $\mathbb{T}_{i}$ does not get contracted in the backward evolution of the discrete dynamical system. 
This is needed because in this way it becomes possible to phrase the genericity properties the birational transform $(g^{\nu})^* (\Gamma)$ must have with respect to $\mathbb{T}_{\nu}$ as properties that the initial curve $\Gamma$ must have with respect to $M_{\nu \shortrightarrow 0}[ \mathbb{T}_{\nu}]$. We emphasize that this amounts to having genericity properties of the given configuration we start from, i.e. the data $X, L, C, p, q, r$, and not the auxiliary $\Gamma$ we choose later.

\smallskip

\noindent \textbf{Step 4.} Certain branches of the birational transforms $(g^{\nu})^* (\Gamma)$ (we will make this precise only later below) must not pass through a distinguished point $p,q,r$ at any time $\nu$. It will turn out that this can be accomplished provided $\Gamma$ intersects the $M_{\nu \shortrightarrow 0}[ \mathbb{T}_{\nu}]$ sufficiently generically and provided the chosen configuration $X, L,C, p,q,r$ has some additional genericity properties. We prove that all these genericity properties can be satisfied, or, equivalently, that the corresponding countable intersections of Zariski open sets are non-empty. 

\subsection{Dynamics of tangent divisors}\xlabel{ssTangent1}

Our objective here is to show

\begin{proposition}\xlabel{pGenericConfiguration}
For a very general choice of $X, L, C, p,q,r$, the following holds: 
The subvariety $M_{i\shortrightarrow k}[\mathbb{T}_i]$ is not contained in $\mathbb{T}_{k-1}$ for $k < i$. In fact, even  $M_{i\shortrightarrow k}[\mathcal{L}_i]$ is not contained in $\mathbb{T}_{k-1}$ for $k < i$.
\end{proposition}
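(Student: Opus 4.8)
### Proof proposal

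The plan is to argue by a dimension count combined with a specialization (degeneration) argument: it suffices to exhibit \emph{one} configuration $X, L, C, p, q, r$ for which the conclusion holds, because the locus of ``bad'' configurations (those for which $M_{i\shortrightarrow k}[\mathcal{L}_i] \subseteq \mathbb{T}_{k-1}$ for some fixed $i > k$) is a closed algebraic subset of the parameter space of all such configurations — this uses Lemmas \ref{lAlgebraic1} and \ref{lAlgebraic2}, which show that the composite of reflections and its backward iterates vary algebraically in families, so that the containment of one subvariety in another is a closed condition. Taking the intersection over the finitely many relevant pairs $(i,k)$ with $0 \le k < i \le 2$ (here $N=3$, so there are only a handful) and then over all iterates of $g$ — a countable family — we see that the bad locus is a countable union of proper closed subsets, hence for very general $X, L, C, p,q,r$ the conclusion holds, \emph{provided} the bad locus is proper, i.e. provided some configuration is good.

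So the heart of the matter is to construct a witness. First I would recall the explicit description of the relevant objects from Section \ref{sSingleReflection}: $\mathcal{L}_i = S(p_i)$ is the cone of lines on $X$ through $p_i$, a surface lying in $\mathbb{T}_i = Y(p_i)$; its class on the appropriate blow-up is controlled by the linear system data in Proposition \ref{pLinearSystemReflection}. One observes that $\dim \mathcal{L}_i = 2$ while $\mathbb{T}_{k-1}$ is a threefold, so the containment $M_{i\shortrightarrow k}[\mathcal{L}_i] \subseteq \mathbb{T}_{k-1}$ is genuinely a codimension condition and not automatic. To break it, I would track, step by step, how the cone $\mathcal{L}_i$ moves backward under $\sigma_{p_{i-1}}^{-1} = \sigma_{p_{i-1}}$, then $\sigma_{p_{i-2}}^{-1}$, etc., using the geometry of a single reflection: a reflection $\sigma_p$ sends a general point $x$ to the third intersection point of $\overline{px}$ with $X$, so the image of a surface $Z$ under $\sigma_p$ is the union of third intersection points of lines through $p$ meeting $Z$. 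The key structural fact is that $\sigma_{p_{k-1}}$ maps $\mathbb{T}_{k-1}$ to the exceptional locus over $p_{k-1}$ (equivalently, $\sigma_{p_{k-1}}[\mathbb{T}_{k-1}]$ is ``small''), so the only way $M_{i\shortrightarrow k}[\mathcal{L}_i]$ could land inside $\mathbb{T}_{k-1}$ is through some very special incidence — essentially the backward orbit of $\mathcal{L}_i$ being tangent to or swept out by lines through $p_{k-1}$ — which one can rule out for a single explicit or sufficiently generic choice of the six data.

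Concretely, to verify the witness I would specialize to a configuration where everything is computable: pick $X$ defined by an explicit cubic form, choose $L, C$ as a plane section splitting as line-plus-conic, and choose $p, q, r$ with coordinates over a number field (or even a finite field, reducing mod $p$ for a quick certificate); then compute the defining ideals of $M_{i\shortrightarrow k}[\mathcal{L}_i]$ and of $\mathbb{T}_{k-1}$ and check non-containment by a Gröbner basis / saturation computation — exactly the style of argument already invoked via \cite{BBB14}. Alternatively, and more in the spirit of the paper, one can give a clean geometric obstruction: show that a general line $\ell$ through $p_{k-1}$ meets $\mathbb{T}_{k-1}$ in only finitely many points while meeting $M_{i\shortrightarrow k}[\mathcal{L}_i]$ in a point that moves, forcing $M_{i\shortrightarrow k}[\mathcal{L}_i] \not\subseteq \mathbb{T}_{k-1}$ on dimension grounds once one knows $M_{i\shortrightarrow k}[\mathcal{L}_i]$ is genuinely $2$-dimensional and not contracted.

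The main obstacle I anticipate is precisely controlling the backward moves $M_{i\shortrightarrow k}[\mathcal{L}_i]$: a priori $\sigma_{p}$ could contract $\mathcal{L}_i$, or drop its dimension, or the ``extra components'' introduced by the total transform (as in the $f\{Z\} = f[Z] \cup L$ phenomenon discussed after Zariski's definitions) could interfere. One must show that the birational transform $M_{i\shortrightarrow k}[\mathcal{L}_i]$ remains a surface at every intermediate stage and identify it well enough to rule out the containment — and one must do this uniformly enough that the ``good'' locus is visibly dense. I expect this is handled by combining the explicit matrix action on $\mathrm{Pic}(\widetilde X)$ from Section \ref{sSingleReflection} (to see that the relevant divisor classes stay effective and nonzero, so no contraction to lower dimension happens generically) with the explicit witness computation above to pin down the proper-closedness of the bad locus.
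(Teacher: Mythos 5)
Your overall frame --- the bad locus is a countable union of closed conditions on the configuration space, so it suffices to exhibit a single witness configuration --- agrees with the logic the paper implicitly relies on, and your instinct to reduce to an explicit computation is also the route the paper takes. But there is a genuine gap at the decisive step: the statement must hold for \emph{all} $k<i$ with $k\in\ZZ$, i.e.\ for arbitrarily long backward orbits, so verifying the witness requires certifying \emph{infinitely many} non-containments at once. Your proposed certificate --- compute the ideals of $M_{i\shortrightarrow k}[\mathcal{L}_i]$ and $\mathbb{T}_{k-1}$ and check non-containment by Gr\"obner bases --- can only ever treat finitely many pairs $(i,k)$ (and would in any case be infeasible, since the degrees of the moved surfaces grow exponentially). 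Nothing in your proposal converts the infinite family of conditions into a finite check; the phrase ``some very special incidence \dots which one can rule out for a single explicit or sufficiently generic choice'' is hand-waving exactly where the real work lies.

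The paper's solution to this is the idea you are missing. First, it reduces from the surface $\mathcal{L}_i$ to a single line $L'\subset\mathcal{L}_i$ through $p_i$: the $\PP^3$ spanned by $L'$ and the plane of $L\cup C$ cuts $X$ in a cubic surface $K$ on which all three reflections restrict to maps defined away from the reflection points, and the whole question collapses to whether the backward orbit of the single point $p_i$ on $L\cup C\subset K$ ever hits one of two distinguished points $p_{k-1},p'_{k-1}$ at step $k$. Second --- and this is the key mechanism (Lemma \ref{lNotExistent}) --- the return map $\phi$ to $L\simeq\PP^1$ is induced by a linear map of $\CC^2$; if it is diagonalizable with eigenvalues of distinct absolute values, one of the two points of $L\cap C$ is an attractor, and once the orbit enters a disc around the attractor smaller than the distance to the finitely many ``bad'' points it can never hit a bad point again. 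Hence only an explicitly bounded initial segment of the orbit needs to be checked, which is then done by the computer calculation \cite{BBS15}. Without some such contraction/attractor argument (or another mechanism turning the infinite verification into a finite one), your witness strategy cannot be completed.
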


\begin{remark}\xlabel{rTangent}
Note that, whenever $L$ is some line on $X$, $p, q$ two points on it, we have $(\sigma_p^{-1})_{\ast} (\mathbb{T}_q) = \mathbb{T}_q$: a point $t$ in $\mathbb{T}_q$ off $L$ spans a $\PP^2$ together with $L$. This intersects $X$ in $L$ and a conic through $q$, which is invariant under reflection in $p$.
\end{remark}

\begin{proof}
Each of the $\mathcal{L}_i$, in particular $\mathbb{T}_i$, contains some line through $p_i$. Fix one of the tangent divisors and one such line $L'=L'_i$.  Together with the plane through $C$ and $L$ it spans a $\PP^3$ which intersects $X$ in a cubic surface $K=K_i$. 

Note that all birational transforms
\[
M_{i\shortrightarrow k}[L']= (\sigma_k\circ \dots \circ \sigma_{i-1}) [L'] \quad k< i , k\in \ZZ , 
\]
lie on $K$. Moreover, we have
\[
\mathbb{T}_{x}X \cap K = \mathbb{T}_{x} K  \cap K \quad \forall x\in K.
\]
Let 
\[
D_{i} = \mathbb{T}_{p_i} K  \cap K \quad\mathrm{for}\; i\in\ZZ.
\]
It suffices to prove that none of the $\sigma_k\circ \dots \circ \sigma_{i-1} (L')$ is equal to a component of  $D_{k-1}$. Moreover, $M_{i\shortrightarrow k}[L']$ can never equal $L$ since $L\cup C$ is invariant under all three reflections. Let $D'_{k-1}:= \overline{D_{k-1}\setminus L}$. Notice that on the surface $K$ all three reflections are defined everywhere except in the reflection point. Hence, it suffices to check that  
\[
\sigma_k\circ \dots \circ \sigma_{i-1} (p_i) \quad k< i , k\in \ZZ,
\]
does not coincide with a point in $D'_{k-1}\cap (C\cup L)=\{ p_{k-1}, p_{k-1}' \}$. The scheme-theoretic intersection has a doublepoint in $p_{k-1}$ and a further third point $p'_{k-1}$.

We check that there is one example of a configuration $X, L, C, p, q, r$ having this property by an explicit computer calculation. To reduce the problem to a finite computation, we use the following Lemma.
\end{proof}

\begin{lemma}\xlabel{lNotExistent}
Retain the notation above. 

 If the linear map induced by 
\[
\phi:=\sigma_p\sigma_q\sigma_r\sigma_p\sigma_q\sigma_r = \sigma_0 \sigma_1 \dots \sigma_5
\]
on $L\simeq \PP^1 \simeq \PP (\CC^2)$ is diagonalizable with eigenvalues of distinct absolute value, then the point $\lambda=a$ or $\lambda=b$ is an attractor for the iterates of $\phi$.

Assume without loss of generality that $b$ is the attractor. 
Moreover, let $\mathcal{B}$ be the following set of ``bad" points on $L$: 
\begin{gather*}
(\sigma_5\sigma_4\sigma_3\sigma_2\sigma_1)(p_0), (\sigma_5\sigma_4\sigma_3\sigma_2\sigma_1)(p'_0) , \\
(\sigma_5\sigma_4\sigma_3\sigma_2)(p_1), (\sigma_5\sigma_4\sigma_3\sigma_2)(p'_1), \\
(\sigma_5\sigma_4\sigma_3)(p_2), p_2'.
\end{gather*}

Then, for each $x\in L$ with $|x - b| < \mathrm{min} \{ |y-b|  : y\in\mathcal{B} \}$, the point  
\[
\sigma_k\circ \dots \circ \sigma_{i-1} (x) \quad k< i , k\in \ZZ,
\]
does not coincide with a point in $D'_{k-1}\cap (C\cup L)=\{ p_{k-1}, p'_{k-1} \}$.

\end{lemma}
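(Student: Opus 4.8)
The plan is to exploit the periodicity of the dynamical system on $L$ under the composition $g=\sigma_r\sigma_q\sigma_p$. Since $p,q\in L$ and $r\in C$, each individual reflection restricts to a well-defined automorphism of $L\simeq\PP^1$ away from the reflection point, and hence the sixth iterate $\phi=g^2$ restricted to $L$ is represented by a product of six elements of $\mathrm{PGL}(\CC^2)$. The orbit of any point $x\in L$ under all of $\sigma_k\circ\dots\circ\sigma_{i-1}$ for $k<i$ is, up to the finitely many ``partial'' compositions listed in $\mathcal{B}$, governed by the iterates $\phi^m$. So the first step is to record that the set of points of $L$ that ever map, under some backward partial composition, to one of the finitely many bad points $\{p_{k-1},p_{k-1}'\}$ is a \emph{finite} set: it is the union of the $\phi^{-m}$-preimages of the finitely many points in $\mathcal{B}$, together with $\mathcal{B}$ itself.

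Next I would use the hypothesis that the induced linear map of $\phi$ is diagonalizable with eigenvalues of distinct absolute value. Conjugating to the eigenbasis, $\phi$ in affine coordinate $\lambda$ acts as $\lambda\mapsto\mu\lambda$ with $|\mu|\neq 1$; its two fixed points on $\PP^1$ are $0$ and $\infty$, one an attractor and one a repeller. Because $L\cup C$ is invariant under all three reflections, the intersection points $a,b$ of $C$ and $L$ are permuted (in fact fixed) by $\phi$, so $\{a,b\}$ must be exactly the fixed-point set $\{0,\infty\}$; hence one of $a,b$ — say $b$ after relabelling — is the attractor. This proves the first assertion of the lemma. Then the set $\phi^{-m}(\mathcal{B})$ for $m\to\infty$ accumulates \emph{only} at the repeller $a$, not at $b$: indeed if $x_m\to b$ with $\phi^{m}(x_m)=y\in\mathcal{B}$, then since $\phi^m$ contracts a punctured neighbourhood of $b$ towards $b$, for $m$ large $\phi^m(x_m)$ would be close to $b$, contradicting $y\in\mathcal{B}$ unless $b\in\mathcal{B}$ — which one checks is not forced, or can be arranged by the genericity of the configuration. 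More precisely, choosing the radius $\delta:=\min\{|y-b|:y\in\mathcal{B}\}>0$, any $x$ with $|x-b|<\delta$ satisfies that none of its forward $\phi$-iterates lands in the closed disc of radius $|y-b|$ around... — here the clean statement is simply that the disc $|x-b|<\delta$ is disjoint from $\mathcal{B}$, and is mapped by each $\phi^m$ into a still smaller disc around $b$ (by the attracting property, possibly after shrinking $\delta$), hence its forward orbit under every partial composition — which is a partial composition of one of the $\phi^m$ — never meets $\{p_{k-1},p_{k-1}'\}$, because every such point lies in $\mathcal{B}$.

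The main obstacle I anticipate is the bookkeeping that reduces the infinitely many backward moves $\sigma_k\circ\dots\circ\sigma_{i-1}(x)$, $k<i\in\ZZ$, to the iterates $\phi^m$ composed with one of the six ``head'' maps, and the verification that the relevant bad targets $p_{k-1},p_{k-1}'$ are then precisely the six points collected into $\mathcal{B}$. One has to check carefully that $p_{k-1}'$ — the residual third point of $D'_{k-1}\cap(C\cup L)$ — is itself a $\phi$-translate of one of the listed points, so that its full backward orbit is captured; the periodicity $\sigma_i=\sigma_j$ for $i\equiv j\ (6)$ is what makes this finite. A secondary point requiring care is ensuring the attracting disc can be taken so that $\phi^m$ maps it \emph{into itself} (strict contraction), which needs $|\mu|<1$ after the relabelling; this is exactly the choice ``assume without loss of generality that $b$ is the attractor.'' Once these reductions are in place, the estimate $|x-b|<\min\{|y-b|:y\in\mathcal{B}\}$ does the job by the elementary geometry of the linear contraction, and no further input is needed.
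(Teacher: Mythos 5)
Your proposal follows essentially the same route as the paper's proof: realize $\phi|_L$ as a M\"obius transformation of $L\simeq\PP^1$ whose fixed points are the intersection points $a,b$ of $C$ and $L$, use the distinct-absolute-value hypothesis to produce an attractor, and conclude by noting that the disc of radius $\min\{|y-b|:y\in\mathcal{B}\}$ about $b$ is forward-invariant under $\phi$ and disjoint from the preimages (under the partial compositions within one period) of the bad targets $p_{k-1},p'_{k-1}$. The one point where you diverge is the justification that $a$ and $b$ are genuinely \emph{fixed} rather than merely permuted by $\phi$: the paper derives this from its Step 1 (the observation that $\sigma_p\sigma_q$ acts as the identity on $L$, so $\phi$ reduces to the return map $\sigma_r\sigma_p\sigma_q\sigma_r$ fixing $a$ and $b$), whereas you assert it parenthetically --- though your route can be closed off directly, since a M\"obius map swapping two points has eigenvalues of equal absolute value, contradicting the hypothesis.
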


A computer calculation \cite{BBS15} shows that there exists an example of $X,L,C,p,q,r$ and three different lines $L'_i$, $i=0,1,2$, such that $\phi$ satisfies the assumptions of the Lemma, and there exists an integer $k_0$ with $k_0\equiv 0$(mod $3$) and the property that, defining
\[
x_k:=\sigma_k\circ \dots \circ \sigma_{i-1} (p_i) \quad k< i , k\in \ZZ,
\]
 $x_{k_0}\in L$ with $|x_{k_0} - b| < \mathrm{min} \{ |y-b|  : y\in\mathcal{B} \}$ and $x_k\neq p_{k-1}, p'_{k-1}$ for $k_0\le k \le i$. This concludes the proof of Proposition \ref{pGenericConfiguration}.

\begin{proof}
We divide the proof into steps. See Figure 3 for the geometric intuition. 

\textbf{Step 1.} Let $L'$ and $K$ be given; for $p\neq x\in L\subset K$, the reflection $\sigma_p$ is defined in $x$ as a map from $K$ to $K$ in the following way. Consider the conic $C_x$ defined by  $\mathbb{T}_xK\cap K =L\cup C_x$. The image point of $x$ is nothing but the second intersection $x'$ of the conic  with $L$.  It follows that for any point $x\in L$ not equal to $p$, $q$, $\sigma_p(q)$ or $\sigma_q(p)$ 
\[
(\sigma_p\circ\sigma_q )(x) =( \sigma_q\circ\sigma_p) (x) =x. 
\]

\smallskip

\textbf{Step 2.}
Note that by the preceding step, 
\[
\phi = \sigma_r\sigma_p\sigma_q\sigma_r : L \simeq \PP (\CC^2) \to L = \PP (\CC^2)
\]
is nothing but the return map to $L$. It is induced by a linear map $\CC^2\to \CC^2$. If the matrix realizing this automorphism is diagonalizable with eigenvalues of distinct absolute values, then, in an appropriate basis, it has the form
\[
\begin{pmatrix}
 \mu_1 & 0 \\ 0 & \mu_2 
\end{pmatrix}.
\]
Note that the eigenspaces are spanned exactly by $a$ and $b$ since $\sigma_q\sigma_p (a) = a$ and similarly for $b$ by Step 1. Also note that only the ratio 
\[
\frac{|\mu_1|}{|\mu_2|}
\]
is important to determine the behavior of the iterates. If $b$ is the attractor,  the matrix can be assumed to be of the form
\[
\begin{pmatrix}
 \mu & 0 \\ 0 & 1
\end{pmatrix}
\]
and $|\mu | < 1$.  

The last assertion of the Lemma follows from the fact that $\phi$ decreases distances to the attractor $b$, and by definition of the bad points, $x$ cannot get mapped to $p_{k-1}, p_{k-1}'$ by $\sigma_k\circ \dots \circ \sigma_{i-1} (x)$.
\end{proof}

\begin{figure}
\caption{Attracting behavior under iteration of reflections.}
\begin{center}
\includegraphics[width=130mm, height=100mm]{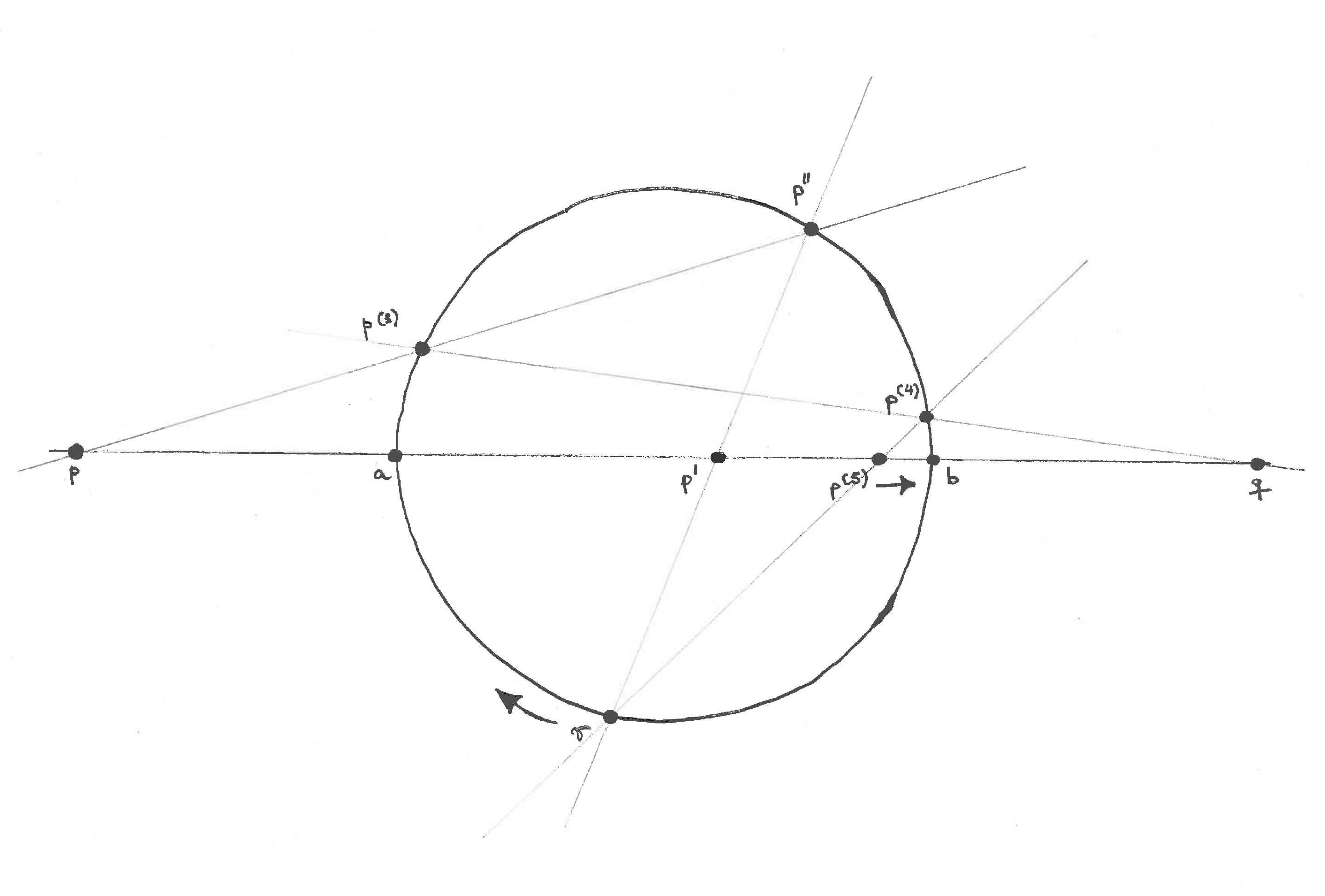}
%\captionof{}{Geometry of the situation}
\end{center}
\end{figure}

\subsection{Dynamics of curve germs}\xlabel{ssJets1}

Let $L$ be a line on a cubic fourfold $X$, and $x$ be a point on $L$. Furthermore, let $\Delta$ be a curve germ (in the classical topology) through $x$; let $x\neq z\in L$ be a point and consider $\sigma_z$. Then $\sigma_z[\Delta ]\cap L$ is a point determined by the normal direction to $\mathbb{T}_{\mathcal{L}_z, x}$ induced by $\Delta$ in $x$:  this follows from the fact that blowing up $z$ and the locus of lines through $z$ we obtain a morphism $\widetilde{\sigma}_z$ on this blow-up as in Section \ref{sSingleReflection}.

\begin{lemma}\xlabel{lPointsOnLine}
Let $X, L, C, p, q, r$ be a configuration on a cubic fourfold consisting of a line $L \subset X$, $C$ a conic in a $\PP^2$ with $L$ such that $L\cap C$ is transverse, $p,q$ points on $L$ away from $L\cap C$, $r$ a point on $C$ away from $L\cap C$. Then
\begin{itemize}
\item[(1)]
$\mathbb{T}_{\mathcal{L}_p, x} = \mathbb{T}_{\mathcal{L}_q, x}$ for $x \in L$, $x \neq p, q$. Here we view these spaces as embedded tangent spaces in the ambient $\PP^5$. We denote the constant two-dimensional subspace specified by the $\mathbb{T}_{\mathcal{L}_p, x}$ simply by $\Pi$ in the sequel.
\item[(2)]
Consider the ``return map to $L$" given by $F = \sigma_r\sigma_q\sigma_p\sigma_r$. For $x\in L\setminus \{ p,q\}$,
\[
dF_x (\Pi) = \Pi .
\]
\end{itemize}
\end{lemma}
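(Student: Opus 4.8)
The starting observation is that, as recalled in \S\ref{sSingleReflection}, the surface $\mathcal{L}_p$ of lines on $X$ through $p$ is a \emph{cone with vertex $p$} (over the curve $C(p)$), so its embedded tangent plane $\mathbb{T}_{\mathcal{L}_p,x}$ is constant as $x$ runs over the generator $L\setminus\{p\}$; call this fixed $\PP^2\subset\PP^5$ provisionally $\Pi_p$. It contains $L$ and satisfies $\Pi_p\subset\mathbb{T}_xX$ for every $x\in L$. Likewise one gets $\Pi_q$ from $\mathcal{L}_q$. To see $\Pi_p=\Pi_q$ I would pass to the normal bundle. For a general line $L\subset X$ (of ``first type'') one has $N_{L/X}\cong\mathcal{O}_L\oplus\mathcal{O}_L\oplus\mathcal{O}_L(1)$, so the positive summand $\mathcal{O}_L(1)\subset N_{L/X}$ is canonical, and $H^0(N_{L/X}(-p))$ is one-dimensional, spanned by the section $s_p$ of that summand having a simple zero (and no other zero) at $p$. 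Since $\mathbb{T}_{\mathcal{L}_p,x}=\langle T_xL,\,s_p(x)\rangle$ for $x\neq p$ (the second generator being the normal direction in which $L$ deforms through $p$), and $s_p(x)$ spans the fibre $(\mathcal{O}_L(1))_x\subset N_{L/X,x}$ \emph{independently of which point of $L$ the deforming section is made to vanish at}, we conclude $\Pi_p=\langle T_xL,(\mathcal{O}_L(1))_x\rangle=\Pi_q$ for all $x\in L\setminus\{p,q\}$. This proves (1) and identifies $\Pi$ as the plane spanned by $L$ and the positive normal direction of $L$ in $X$.

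\textbf{Part (2): preliminaries.} First, $F$ maps $L$ to $L$: each of $\sigma_p,\sigma_q,\sigma_r$ preserves the plane cubic $\Pi_0\cap X=L\cup C$, with $\sigma_r$ interchanging the components $L$ and $C$ while $\sigma_p,\sigma_q$ preserve the component $C$; hence along the orbit $x\to x_1\to x_2\to x_3\to F(x)$ the points $x,F(x)$ lie on $L$ and $x_1,x_2,x_3$ lie on $C$, and in particular the whole orbit stays in $\Pi_0$. Second, for $X=\{F=0\}$ and $z\in X$ one has the classical formula $\sigma_z(Y)=\psi_z(Y)\,Y-\phi_z(Y)\,z$ with $\psi_z(Y)=\nabla F(z)\cdot Y$ linear (the equation of $\mathbb{T}_zX$) and $\phi_z(Y)=\nabla F(Y)\cdot z$ quadratic; differentiating at a point $y$ with image $y'=\sigma_z(y)$ gives, in homogeneous coordinates,
\[
d\sigma_z|_y(w)\ \equiv\ \psi_z(y)\,w\ +\ \psi_z(w)\,y\ -\ (D_w\phi_z)(y)\,z\qquad(\mathrm{mod}\ y').
\]
The crucial point is that when $y$, $y'$ and $z$ all lie in the plane $\Pi_0$ --- exactly the situation along our orbit, since $z\in\{p,q,r\}\subset\Pi_0$ --- the last two terms and $y'$ all lie in the $3$-dimensional subspace $\langle\Pi_0\rangle\subset\CC^6$, so modulo $\langle\Pi_0\rangle$ one simply has $d\sigma_z|_y(w)\equiv\psi_z(y)\,w$, a nonzero scalar multiple of the identity ($\psi_z(y)\neq0$ because for a general configuration no orbit point lies on the tangent hyperplane of $X$ at the next reflection centre).

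\textbf{Part (2): conclusion.} Composing the four differentials along the orbit yields $dF_x(w)\equiv\kappa\,w\pmod{\langle\Pi_0\rangle}$ for every $w$, with $\kappa=\psi_r(x_3)\psi_q(x_2)\psi_p(x_1)\psi_r(x)\neq0$. Let $\widetilde\Pi\subset\CC^6$ be the $3$-dimensional cone over the fixed plane $\Pi$. By part (1) it contains $\langle L\rangle$ and it lies in the affine tangent space of $X$ at every point of $L$; and for a general configuration $X$ is not tangent to $\Pi_0$ along $L\cup C$, so $\widetilde\Pi\cap\langle\Pi_0\rangle=\langle L\rangle$ and $\langle\Pi_0\rangle\not\subset$ (affine tangent space of $X$ at $F(x)$). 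Now for $w\in\widetilde\Pi$ we have $dF_x(w)$ in the affine tangent space of $X$ at $F(x)$ and congruent to $\kappa w\in\widetilde\Pi$ modulo $\langle\Pi_0\rangle$; the set of such vectors is $\big(\widetilde\Pi+\langle\Pi_0\rangle\big)\cap(\text{tangent space at }F(x))$, which is $3$-dimensional (because the projection $\CC^6\to\CC^6/\langle\Pi_0\rangle$ has kernel exactly $\langle L\rangle$ on that tangent space) and contains the $3$-dimensional $\widetilde\Pi$, hence equals $\widetilde\Pi$. Since $F$ already maps $L$ to $L$ (so $dF_x$ preserves $\langle L\rangle$) and $dF_x$ is injective, we get $dF_x(\widetilde\Pi)=\widetilde\Pi$, i.e. $dF_x(\Pi)=\Pi$, using once more that $\Pi$ is the same plane at $x$ and at $F(x)$.

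\textbf{Main obstacle.} As always in this paper, the genuine work is not the linear algebra above but checking that a sufficiently general configuration $X,L,C,p,q,r$ and a general $x\in L$ really satisfy all the hypotheses used: that $L$ is of first type, that the forward orbit of $x$ avoids the indeterminacy loci $\mathcal{L}_p,\mathcal{L}_q,\mathcal{L}_r$ so that $F$ is a local isomorphism at $x$, that $\psi_z(y)\neq0$ at each of the four steps, and that $X$ is not tangent to $\Pi_0$ along $L\cup C$. Each of these is a nonempty open condition, and their conjunction can be exhibited on an explicit example in the spirit of Lemma \ref{lNotExistent}; I would isolate this verification as a separate lemma and keep it out of the main argument.
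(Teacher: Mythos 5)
Your proof is correct. Part (1) is essentially the paper's argument in different clothing: both hinge on $L$ being of the first type with $N_{L/X}\simeq\mathcal{O}_L^{\oplus 2}\oplus\mathcal{O}_L(1)$; the paper quotes the Clemens--Griffiths fact that the intersection of all embedded tangent spaces along such a line is a $\PP^2$ and notes that both cone tangent planes sit inside it, while you extract the same conclusion from the canonicity of the $\mathcal{O}_L(1)$ summand and $h^0(N_{L/X}(-p))=1$ --- the two are equivalent, and yours has the small bonus of identifying $\Pi$ concretely as $L$ plus the positive normal direction. Part (2), however, is a genuinely different route. The paper stays at the level of the normal bundle: $dF$ induces a map $N_{L/X}\to N_{L/X}$ covering $F|_L$, and composing with a summand-preserving lift of $F^{-1}$ yields a bundle endomorphism of $\mathcal{O}^{\oplus 2}\oplus\mathcal{O}(1)$ over the identity, which must preserve the $\mathcal{O}(1)$ summand since $\mathrm{Hom}(\mathcal{O}(1),\mathcal{O})=0$; hence $dF(\Pi)=\Pi$. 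That argument is short, global along $L$, and coordinate-free, but it is silent about where $F$ is actually a local isomorphism and about possible degenerations of the induced bundle map. Your route --- the explicit formula $\sigma_z(Y)=\psi_z(Y)Y-\phi_z(Y)z$, the observation that each differential along an orbit contained in $\Pi_0=\langle L\cup C\rangle$ is a nonzero scalar times the identity modulo $\langle\Pi_0\rangle$, and the closing dimension count $(\widetilde\Pi+\langle\Pi_0\rangle)\cap T_{F(x)}X=\widetilde\Pi$ --- is longer and pointwise, but it makes every genericity hypothesis explicit ($\psi_z\neq 0$ along the orbit, $\Pi\neq\Pi_0$, orbit off the indeterminacy loci) and proves slightly more, namely that $dF_x$ is scalar modulo $\langle\Pi_0\rangle$. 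Both proofs are sound; your plan to isolate the open-condition verifications in a separate lemma is the right move, since the statement as written tacitly restricts to those $x$ at which $F$ is a local isomorphism, a restriction both arguments need.
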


\begin{proof}
We start by recalling some facts about lines on a cubic hypersurface $X^n \subset \PP^{n+1}$, see \cite[Sect.\ 6 \& 7]{CG72} or \cite[Sect.\ 1]{Izadi99}: the normal bundle of a line $l$ on $X$ can be of the following two types
\begin{gather*}
N_{l/X^n} \simeq \mathcal{O}_l\oplus\mathcal{O}_l\oplus \mathcal{O}_l(1)^{\oplus (n-3)} : \mathrm{lines}\: \mathrm{of}\: \mathrm{the}\: \mathrm{first}\: \mathrm{type;}\\
N_{l/X^n} \simeq \mathcal{O}_l(-1) \oplus \mathcal{O}_l(1)^{\oplus (n-2)} : \mathrm{lines}\: \mathrm{of}\: \mathrm{the}\: \mathrm{second}\: \mathrm{type.}
\end{gather*}
The dimension of the entire Fano variety of lines, which is smooth and irreducible, is $2(n-2)$ and the subvariety of lines of the second type is $n-2$. Moreover, for a line of the first type, the intersection of all the embedded projective tangent spaces to $X^n$ along $l$ is a linear projective subspace of $\PP^{n+1}$ of dimension $n-2$, and the same holds for a line of the second type with $n-2$ replaced by $n-1$.

In our case, this means that a generic $L$ will be of the first type, and since both $\mathbb{T}_{\mathcal{L}_p, x}$ and $\mathbb{T}_{\mathcal{L}_q, x}$ are planes contained in the intersection of all the embedded projective tangent spaces to $X$ along $L$ (since the tangent bundles of the cones $\mathcal{L}_p$ resp. $\mathcal{L}_q$ are trivialized along a ruling), we conclude that $\mathbb{T}_{\mathcal{L}_p, x} = \mathbb{T}_{\mathcal{L}_q, x} = \Pi$ is constant and equal to the intersection of tangent spaces along $L$. This proves (1).

For (2) remark that the differential $dF$ of the return map $F$ fits into a commutative diagram
\[
\xymatrix{
N_{L/X}\ar[d] \ar[r]^{dF} & N_{L/X} \ar[d] \\
L \ar[r]^F  & L
}
\]
and composing with 
\[
\xymatrix{
N_{L/X}\ar[d] \ar[r]^{\eta} & N_{L/X} \ar[d] \\
L \ar[r]^{F^{-1}}  & L
}
\]
where $\eta$ is any lift of the projectivity $F^{-1} \colon L\simeq \PP^1 \to \PP^1$ to the vector bundle $N_{L/X} = \mathcal{O}^{\oplus 2} \oplus \mathcal{O}(1)$ preserving the summands $\mathcal{O}(1)$ and $\mathcal{O}^{\oplus 2}$, we get that  $dF \circ \eta$ is a bundle automorphism of $N_{L/X}$ hence preserves the individual summands as well. Therefore, $dF$ preserves $\Pi$ which is spanned by the total space of $\mathcal{O}(1)$ and $L$.
\end{proof}

The following genericity statement is a major ingredient for justifying the computations in Theorem \ref{tConicLine} below.

\begin{proposition}\xlabel{pGenericJets}
There is a sufficiently generic choice of the configuration $X,L,C, p,q,r$ and a curve $\Gamma \in (H)^3$ such that the following holds for the moves $M_{0 \shortrightarrow \nu }[\Gamma]$, $1\le \nu < \infty$:
\begin{itemize}
\item[(1)]
Outside of $C$ and $L$, the transform $M_{0 \shortrightarrow \nu }[\Gamma]$ intersects $\mathbb{T}_{\nu}$ transversely in finitely many points which all lie outside $\mathcal{L}_{\nu}$. 
\item[(2)]
In the notation of the preceding item, let us consider a germ $\Delta$ (in the Euclidean topology) of $M_{0 \shortrightarrow \nu }[\Gamma]$ around any of the said intersection points with $\mathbb{T}_{\nu}$. Then for all $\nu +1 \le i < \infty$, the move $M_{\nu \shortrightarrow i} [\Delta ]$ is well-defined, and for $\nu+1 \le i < \infty$, it is a smooth curve germ not passing through any of the points $p,q,r$, but some point on $L$ or $C$ other than these three. 
\end{itemize} 
\end{proposition}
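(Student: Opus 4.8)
\emph{Plan of proof.} The statement asks for a single choice of $(X,L,C,p,q,r)$ together with $\Gamma\in (H)^3$ meeting infinitely many genericity requirements: one for each $\nu\ge 1$, and within each $\nu$ one for each of the finitely many intersection points and each later time $i$. The plan is to realise every one of these as a Zariski-open condition on the data, to check that each is satisfied on a dense open set, and then to conclude by a Baire-category argument over $\CC$ that the countable intersection is non-empty. The input that prevents the conditions from being vacuous is Proposition \ref{pGenericConfiguration}: for a very general configuration $X,L,C,p,q,r$ no tangent divisor $\mathbb{T}_\nu$, and no surface $\mathcal{L}_\nu$, is contracted in the backward evolution of the system, so that $M_{\nu\shortrightarrow 0}[\mathbb{T}_\nu]$ is, for each $\nu$, a genuine divisor on $X$ and $M_{\nu\shortrightarrow 0}[\mathcal{L}_\nu]$ stays of codimension $\ge 2$. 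I would fix such a very general configuration once and for all and only let $\Gamma$ vary afterwards.

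For part (1) the key observation is that, for a general $\Gamma$ and away from $L\cup C$, the transform $M_{0\shortrightarrow\nu}[\Gamma]$ is the isomorphic image of an open part of $\Gamma$: the reflections and their inverses restrict to isomorphisms over dense opens, so this holds as soon as $\Gamma$ avoids the backward moves $M_{\nu\shortrightarrow 0}[\,\cdot\,]$ of the (countably many) indeterminacy loci and contracted divisors of the $\sigma_{p_i}$, all of codimension $\ge 2$ by Proposition \ref{pGenericConfiguration} and the same explicit verification. Granting this, the assertion of (1) for a fixed $\nu$ becomes: $\Gamma$ meets the fixed divisor $M_{\nu\shortrightarrow 0}[\mathbb{T}_\nu]$ transversely, in smooth points of both, and away from the codimension-$\ge 2$ locus $M_{\nu\shortrightarrow 0}[\mathcal{L}_\nu]$. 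Since $(H)$ is very ample, this holds for a general member of $(H)^3$ by Bertini; intersecting over the countably many $\nu$ keeps it generic, and in particular $\Gamma$, hence $M_{0\shortrightarrow\nu}[\Gamma]$, is smooth at each intersection point, so the germ $\Delta$ of part (2) is automatically a smooth curve germ.

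For part (2) I would follow the germ $\Delta$ forward. Applying $\sigma_{p_\nu}$ contracts a general point of $\mathbb{T}_\nu=Y(p_\nu)$ to the reflection point $p_\nu$ --- visible from the fact that $\widetilde{\sigma}_{p_\nu}$ interchanges $\widetilde{Y}(p_\nu)$ and $\widetilde{E}(p_\nu)$ on $\widetilde{X}$ --- so $\sigma_{p_\nu}[\Delta]$ is a germ centered at $p_\nu\in L\cup C$; since $L\cup C=X\cap\PP^2$ is invariant under each of $\sigma_p,\sigma_q,\sigma_r$, every move $M_{\nu\shortrightarrow i}[\Delta]$ with $i>\nu$ is again centered at a point of $L\cup C$, which already yields the last clause of (2). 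By the discussion preceding Lemma \ref{lPointsOnLine} and by Lemma \ref{lPointsOnLine} itself, the center of $M_{\nu\shortrightarrow i}[\Delta]$ on $L\cup C$, and whether that move is contracted, singular or undefined, depend only on finitely many jets of $\Delta$ at its center, hence on finitely many jets of $\Gamma$ at the intersection point considered. For each fixed $i$ the conditions ``this center lies in $\{p,q,r\}$'', ``the germ becomes singular'' and ``the move is undefined or contracted'' are therefore closed conditions on that jet of $\Gamma$; they are proper thanks to the genericity of $X,L,C,p,q,r$, the differential-geometric input of Lemma \ref{lPointsOnLine} and the attracting behaviour of Lemma \ref{lNotExistent}, so a general $\Gamma$ avoids all of them. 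Intersecting over all $\nu$, all $i>\nu$, and the finitely many intersection points at each stage --- once more a countable intersection of dense open conditions on $\Gamma$ --- produces the desired configuration.

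The step I expect to be the main obstacle is this last one: showing the closed jet conditions just described are genuinely \emph{proper}, i.e.\ that the jet of the forward-evolving germ is general enough as $\Gamma$ ranges over $(H)^3$. Concretely one must show that the assignment sending the jet of $\Gamma$ at an intersection point to the jet of $M_{\nu\shortrightarrow i}[\Delta]$ at its center on $L\cup C$ is dominant onto the relevant finite-dimensional jet space at each step; the control required here rests on the fact that the differential of the return map fixes the plane $\Pi$ but is otherwise unconstrained (Lemma \ref{lPointsOnLine}), and, as for Proposition \ref{pGenericConfiguration}, may ultimately reduce to an explicit computer-assisted check for one witness configuration.
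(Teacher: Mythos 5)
Your setup for part (1) — fix a configuration satisfying Proposition \ref{pGenericConfiguration}, pull all conditions back to conditions on $\Gamma$ against the backward moves $M_{\nu\shortrightarrow 0}[\mathbb{T}_\nu]$ and $M_{\nu\shortrightarrow 0}[\mathcal{L}_\nu]$, and take a countable intersection of dense open conditions — is essentially the paper's argument and is fine. The problem is part (2), where you correctly isolate the crucial non-degeneracy statement and then leave it open: you propose to show that the map from the jet of $\Gamma$ at an intersection point to the jet of $M_{\nu\shortrightarrow i}[\Delta]$ at its center is dominant onto a finite-dimensional jet space, admit this is ``the main obstacle,'' and defer it to a possible computer check. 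As written this is a genuine gap, and moreover the route you sketch is harder than what is actually needed.

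The paper closes the gap without any jet-space dominance. The two key observations are: (A) by Lemma \ref{lPointsOnLine}, the tangent spaces $\mathbb{T}_{\mathcal{L}_p,x}$ along $L$ are a \emph{constant} plane $\Pi$ and the differential of the return map preserves $\Pi$; consequently the entire sequence of centers $x_1,x_2,\dots$ of the forward-moved germs on $L\cup C$ depends only on the class of the tangent direction of $\sigma_p[\Delta]$ at $p$ in $\PP(N_{\mathcal{L}_p/X,p})$ — equivalently, only on the single point where $\Delta$ meets $\mathbb{T}_\nu$, with no higher-order jet data entering at any step. (B) By Proposition \ref{pGenericConfiguration} the backward transforms of tangent divisors stay divisorial, so the forward moves of $\mathbb{T}_\nu$ are first a point and then curves inside $L\cup C$; hence $\sigma_{i}\circ\dots\circ\sigma_\nu$ restricts to a rational map from the threefold $\mathbb{T}_\nu$ \emph{dominating} $L$ or $C$, and a $\Gamma$ meeting $M_{\nu\shortrightarrow 0}[\mathbb{T}_\nu]$ in sufficiently general points therefore produces centers $x_i$ that are general points of $L$ or $C$, in particular distinct from $p,q,r$ for all $i$. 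This is the argument your proposal is missing; Lemma \ref{lNotExistent}, which you invoke here, is used in the paper only to establish Proposition \ref{pGenericConfiguration} and plays no direct role in part (2).
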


\begin{proof}
For (1), we use Proposition \ref{pGenericConfiguration}: we choose $\Gamma$ in such a way that it intersects all $M_{\nu \shortrightarrow 0}[\mathbb{T}_{\nu}]$ transversely in points away from $M_{\nu \shortrightarrow 0}[\mathcal{L}_{\nu}]$ and $M_{j\shortrightarrow 0}[\mathbb{T}_{j}]$ for $j<\nu$. The map
\[
(\sigma_{\nu-1}\circ \dots \circ \sigma_0)\colon X\dashrightarrow X
\]
is an isomorphism onto its image when restricted to the open $X^0\subset X$ which is the complement of the $M_{j\shortrightarrow 0}[\mathbb{T}_{j}]$ for $j<\nu$. From (2), which we prove below, it follows that all points in
\[
(\sigma_{\nu-1}\circ \dots \circ \sigma_0)[\Gamma ] \setminus (\sigma_{\nu-1}\circ \dots \circ \sigma_0)|_{X^0}(\Gamma\cap X^0)
\] 
are contained in $C\cup L$. Hence we get (1). 

\smallskip

To prove (2), for notational convenience, we will only give the proof for the case that $\Delta$ is a curve germ passing through a point of $\mathbb{T}_p$; i.e.,  $\nu =0$, but everything else is arbitrary. For the general case, we simply use Proposition \ref{pGenericConfiguration} again, but otherwise no new arguments are needed. The main point is that a $\Delta$ intersecting $\mathbb{T}_p$ sufficiently generically will verify (2).

In fact, choosing $\Delta$ generically, the birational transform $\Delta' = \sigma_p[\Delta]$ will have a generic tangent direction in $p$, i.e.\ we can realize an open dense subset of directions in $T_pX$ choosing a generic $\Delta$.  We will now consider the sequence of birational transforms
\[
\sigma_q [\Delta'], (\sigma_r\circ\sigma_q)[\Delta'], (\sigma_p\circ \sigma_r\circ\sigma_q)[\Delta'], \dots
\]
and the sequence of points $x_1, x_2, x_3, \dots $ on $L\cup C$ in which these curve germs intersect $L \cup C$. We will prove two statements about these now:

\begin{itemize}
\item[(A)]
The sequence $x_1, x_2, \dots $ depends only on the element in $\PP (N_{\mathcal{L}_p/X, p})$ which the tangent direction of $\Delta'$ in $p$ induces.
\item[(B)]
None of the points $x_1, x_2, \dots$ coincides with any of $p,q,r$ for a generic $\Delta$ resp. $\Delta'$.
\end{itemize}

To prove (A) we use Lemma \ref{lPointsOnLine}, (1). First of all, if $\Delta^{x_i}$ is a curve trait passing through $x_i$ on $L$, then clearly $x_{i+1} = \sigma_p[\Delta^{x_i}]\cap L$ and $x_{i+2} = (\sigma_q\sigma_p)[\Delta^{x_i}]\cap L$ depend only on the initial normal direction to $\mathbb{T}_{\mathcal{L}_p, x_i} =\Pi$ that $\Delta^{x_i}$ induces: by the geometry of a single reflection explained in Section \ref{sSingleReflection} (see also Figure 1 in particular), $\sigma_p$ maps $\Delta^{x_i}$ to a curve trait $\Delta^{x_{i+1}} =\sigma_p[\Delta^{x_i}]$ through a point $x_{i+1}$ on $L$ that is the image of the normal direction of $\Delta^{x_i}$ in $x_i$ under $\pi\circ \widetilde{\sigma}_p$. Moreover, the normal direction to $\Pi$ which $\Delta^{x_{i+1}}$ induces in $x_{i+1}$ is determined by the fact that it is the one that under $\pi\circ \widetilde{\sigma}_p$ gets mapped back to $x_i$: hence it is the one that $\mathbb{T}_{X, x_i}\cap \mathbb{T}_{X, x_{i+1}}$ induces in $x_{i+1}$. Then a similar argument for $\sigma_q$ shows that also $\Delta^{x_{i+2}}= (\sigma_q\sigma_p)[\Delta^{x_i}]$ along with its normal direction to $\Pi$, and in particular $x_{i+2}$, are completely determined by the normal direction of the initial curve trait $\Delta^{x_i}$. 

Next, using Lemma \ref{lPointsOnLine} (2), we see that also the normal direction of $F [\Delta^{x_{i+2}}]$ to $\Pi$ is determined by that of $\Delta^{x_{i+2}}$ hence of $\Delta^{x_i}$. This shows (A) above. 

Finally (B) follows from Proposition \ref{pGenericConfiguration} which shows that all backward transforms of tangent divisors remain divisorial. Moreover, the forward moves of tangent divisors are first a point, then curves.  Hence there is a rational map from $\mathbb{T}_p$ onto either $L$ or $C$ induced by $\sigma_i\circ\dots \circ\sigma_0$. Thus there is an open subset of $\mathbb{T}_p$ on which this map is a morphism and maps dominantly onto $L$ or $C$. If the initial curve germ $\Delta$ intersects $\mathbb{T}_p$ generically in this open subset, (B) holds. 
%Finally for (B) we can again employ our calculation on a suitable cubic surface as in the proof of Proposition %\ref{pGenericConfiguration}: choose a curve trait $\Delta$ as above, and consider the $\PP^3$ spanned by the tangent %line $\tau$ of $\Delta'$ in $p$ and the plane $\PP^2$ in which $L$ and $C$ lie. This $\PP^3$ intersects $X$ in a cubic %surface $K$, and we can follow the dynamics of curve traits with tangent line $\tau$ on $K$ to show that for these the %sequence of points $x_1, x_2, \dots$ avoids $p,q,r$. But for generic choice of the configuration and $\tau$ this is a %similar computation as the one performed in the course of the proof of Proposition \ref{pGenericConfiguration} above; %see \cite{BBS15}\bred{COMPLETE COMPUTATION}.
\end{proof}

\subsection{Determination of the dynamical degree}\xlabel{ssDegree1}

We have now assembled enough auxiliary results to justify our computations in

\begin{theorem}\xlabel{tConicLine}
For $g = \sigma_r \circ \sigma_q\circ \sigma_p$, with $p,q, r\in X$ in a plane, $p,q$ on a line on $X$ satisfying Propositions \ref{pGenericConfiguration} and \ref{pGenericJets}, we have
\[
\lambda_1 (g) = \lambda_3 (g) = \frac{5 + \sqrt{33}}{2}\approx 5.37...
\] 
\end{theorem}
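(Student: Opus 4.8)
The plan is to follow the four-step program outlined in Section \ref{ssComputation}: compute $\lambda_3(g)$ by tracking the degrees $d_\nu = \deg (g^\nu)^*(\Gamma)$ of birational transforms of a sufficiently generic space curve $\Gamma \in (H)^3$, set up the transition matrix $A$ governing the evolution $v_{\nu+1} = A v_\nu$, and then invoke Lemma \ref{lMatrixEigenvalues} to read off $\lambda_3(g)$ as the dominant eigenvalue of $A$. Finally, deduce $\lambda_1(g) = \lambda_3(g)$ by the symmetry $\lambda_1(g) = \lambda_3(g^{-1})$ of Lemma \ref{lInverse}, noting that $g^{-1} = \sigma_p \sigma_q \sigma_r$ is a composition of reflections in a geometrically analogous configuration (three points in a plane, two on a line on $X$), so the same computation applies.

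\textbf{Setting up the state vector.} First I would identify the right auxiliary quantities $x_\nu(2), \dots, x_\nu(r)$. Applying a single reflection $\sigma_{p_i}$ replaces, in cohomology, $\widetilde H \mapsto 2\widetilde H - 3\widetilde E(p_i) - \widetilde F(p_i)$ (Proposition \ref{pLinearSystemReflection}), so the degree of the transform of a curve jumps according to how the curve meets the tangent divisor $\mathbb{T}_i = Y(p_i)$ and the surface of lines $\mathcal{L}_i = S(p_i)$ through $p_i$. Concretely, the new degree of $(g^{\nu+1})^*\Gamma$ will be an affine-linear function of $d_\nu$, of the intersection numbers of the current transform with the three tangent divisors $\mathbb{T}_p, \mathbb{T}_q, \mathbb{T}_r$, and of the multiplicities at $p,q,r$; Propositions \ref{pGenericConfiguration} and \ref{pGenericJets} are exactly what guarantee that these ancillary numbers themselves evolve linearly — that tangent divisors never get contracted backwards (so the intersection conditions stay divisorial and can be pushed back to conditions on $\Gamma$), and that the relevant branches of the transforms never run through $p,q,r$ (so no unexpected multiplicity is created or destroyed). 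I would write out the $3\times 3$ (or slightly larger) matrix $A$ encoding the transition degree $\to$ (degree, intersection data) after one full application of $g = \sigma_r\sigma_q\sigma_p$, using the single-reflection matrix $\left(\begin{smallmatrix} 2 & 1 & 0 \\ -3 & -2 & 0 \\ -1 & -1 & 1\end{smallmatrix}\right)$ three times and bookkeeping the combinatorics of which tangent divisor maps to which under each $\sigma_{p_i}$ (as in the $N=2$ computation of Theorem \ref{tGeneral}). The characteristic polynomial should turn out to be something with root $\tfrac{5+\sqrt{33}}{2}$; a plausible guess is $x^2 - 5x - 2$ (roots $\tfrac{5\pm\sqrt{33}}{2}$), possibly times some cyclotomic-type factor from the finite-order part, and I would verify the hypotheses of Lemma \ref{lMatrixEigenvalues}: the top eigenvalue $\mu_1 = \tfrac{5+\sqrt{33}}{2}$ is real, simple, of strictly maximal modulus (the other root has modulus $\tfrac{\sqrt{33}-5}{2} < 1$), and the genericity of $\Gamma$ ensures $e_1$ is not in the span of the subdominant eigenspaces and the $\mu_1$-eigenvector has nonzero first coordinate.

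\textbf{The main obstacle.} The genuinely hard part is \emph{not} the eigenvalue computation but justifying that the degree sequence really is governed by a fixed linear recursion — i.e., that the transforms $(g^\nu)^*\Gamma$ behave uniformly for all $\nu$. This is where one must combine Proposition \ref{pGenericConfiguration} (tangent divisors stay divisorial under backward evolution, hence the needed transversality of $M_{0\shortrightarrow\nu}[\Gamma]$ with $\mathbb{T}_\nu$ can be arranged by a genericity condition on $\Gamma$ alone, via the non-emptiness of a countable intersection of Zariski-open sets) with Proposition \ref{pGenericJets} (the curve germs $\Delta$ arising at each intersection with a tangent divisor stay smooth, avoid $p,q,r$ forever, and land on $L\cup C$). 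I would argue that with these in hand, for \emph{every} $\nu$ the transform $(g^\nu)^*\Gamma$ decomposes into a "main" branch whose degree accounts for $d_\nu$ and finitely many "attracting" branches confined to $L\cup C$ whose contribution is controlled, and that reflecting once more produces exactly the affine-linear update encoded by $A$; the constant term in the affine update is absorbed by enlarging the state vector to include a constant coordinate, which does not affect the dominant eigenvalue. The delicate point I would flag is ensuring the bookkeeping is consistent across the boundary cases where branches approach the intersection points $a, b$ of $C$ and $L$ — here the attracting dynamics on $L$ from Lemma \ref{lNotExistent} is what keeps everything away from the bad locus — and verifying, ideally with the aid of the explicit computer check \cite{BBS15} already invoked for Proposition \ref{pGenericConfiguration}, that a single configuration $X,L,C,p,q,r$ satisfies simultaneously all the countably many genericity conditions required by Propositions \ref{pGenericConfiguration} and \ref{pGenericJets}, so that the hypotheses of Theorem \ref{tConicLine} are actually satisfiable.
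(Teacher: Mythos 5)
Your proposal reproduces the general strategy that the paper itself lays out in Subsection \ref{ssComputation} (track degrees of transforms of a generic curve $\Gamma\in (H)^3$, encode the evolution in a linear recursion, apply Lemma \ref{lMatrixEigenvalues}, then get $\lambda_1=\lambda_3$ via Lemma \ref{lInverse}), and your guess $x^2-5x-2$ for the essential factor of the characteristic polynomial is in fact correct: the paper's transition matrix has characteristic polynomial $(x-1)(x^2-5x-2)$. The concluding symmetry argument is also essentially the paper's, though the paper is more careful: it first swaps $p$ and $q$ (legitimate because the genericity hypotheses are symmetric in $p,q$), then conjugates by $\sigma_r$ to turn $\sigma_r\sigma_p\sigma_q$ into $\sigma_p\sigma_q\sigma_r=g^{-1}$, and only then invokes Lemma \ref{lInverse}; your shortcut ``$g^{-1}$ is geometrically analogous, so the same computation applies'' glosses over the fact that for $g^{-1}$ the reflection on the conic is applied \emph{first}, so the table is not literally the same.

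The genuine gap is that the heart of the proof --- identifying the correct state vector and deriving the transition matrix --- is left as a black box. The paper's state vector is not ``degree plus intersection numbers with the tangent divisors plus multiplicities at $p,q,r$'' (the intersection number with $\mathbb{T}_{p_i}$ is just the degree, so it carries no new information); it is the triple $(\lambda,\gamma,\delta)$ recording the number of points of the current transform lying on $L$, the number lying on $C$, and the degree. The relevance of $L$ and $C$ comes from two geometric facts you do not isolate: $L$ is contained in the base surfaces $\mathcal{L}_p$ and $\mathcal{L}_q$ (so the degree update under $\sigma_p$ is $\delta\mapsto 2\delta-\lambda$, not $2\delta$ minus some unspecified correction), and $\sigma_r$ preserves the plane section $L\cup C$ while \emph{interchanging} $L$ and $C$ (which is what feeds $\gamma$ back into the first coordinate and produces the matrix
\[
\begin{pmatrix} 0&1&0\\ -3&0&5\\ -4&0&6 \end{pmatrix}
\]
for one application of $g$). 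Your alternative of iterating the single-reflection Picard matrix $\left(\begin{smallmatrix} 2&1&0\\ -3&-2&0\\ -1&-1&1\end{smallmatrix}\right)$ would not work as stated: that matrix describes the action on $A^1$ of the blow-up of a single point together with its surface of lines, whereas $\lambda_3$ is governed by the action on $1$-cycles, and composing three reflections would require a common resolution and the (nontrivial) intersection theory of curve classes on it. The paper's elementary point-counting sidesteps all of that, but it is precisely this counting --- justified entry by entry using Propositions \ref{pGenericConfiguration} and \ref{pGenericJets} --- that constitutes the proof, and it is absent from your proposal.
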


\begin{proof}
We will compute $\lambda_3 (g)$ and then prove that $\lambda_1 (g) = \lambda_3 (g)$. 

Let $\Gamma =\Gamma_0$ be a curve satisfying the conclusion of Proposition \ref{pGenericJets}.

We compute the degrees of the birational transforms $\Gamma_i = M_{0\shortrightarrow i}[\Gamma]$ directly. It will turn out that the degree $\delta_i$ of the birational transform $\Gamma_i$ just depends on how many points $\lambda_{i-1}$ of $\Gamma_{i-1}$ (counted with multiplicities) lie on $L$, and how many points $\gamma_{i-1}$ of $\Gamma_{i-1}$ (counted with multiplicities) lie on $C$ in the preceding step of the iteration.

Suppose we start with some input data $(\lambda , \gamma, \delta )$. The following table summarizes how these numbers change by applying  $\sigma_p$, $\sigma_q$, $\sigma_r$ successively:

\begin{center}
\begin{tabular}{c| ccc}
 &  $\lambda$ & $\gamma$ &  $\delta$ \\ \hline 
 $\sigma_p$ & $ \delta$ & $\gamma$ & $2\delta - \lambda$ \\
 $\sigma_q$ & $2\delta - \lambda$ & $\gamma$ & $3\delta - 2\lambda$ \\
 $\sigma_r$ & $\gamma$ & $5\delta - 3\lambda$ & $6\delta -  4\lambda$
\end{tabular}
\end{center}

To justify the numbers in the first line, note that, by Proposition \ref{pGenericJets}, at this step there will be $\lambda$ points on $L$ none of which coincides with $p$ (or $q$). Moreover, by part (1) of the same proposition, there are $\delta -\lambda$ intersection points of the curve with $\mathbb{T}_p \setminus \mathcal{L}_p$ outside of $L$.   

Reflection in $p$ stabilizes $C$, and so does reflection in $q$, so $\gamma$ remains the same in the first and second steps. The degree gets multiplied by $2$, since $\sigma_p$ is given by a linear system of quadrics, and gets diminished by the number of points lying in the base locus of $\sigma_p$, i.e. $\lambda$. After application of $\sigma_p$, $\delta -\lambda$ points of $\Gamma$ get mapped to $p$, and the $\lambda$ points already on $L$ get mapped to some other points on $L$, adding to a total of $\delta$ points on $L$.

Now consider the second row. Note that by Proposition \ref{pGenericJets}, the curve intersects $\mathbb{T}_q\setminus\mathcal{L}_q$ in $(2\delta -\lambda )-\delta = \delta -\lambda$ points which get contracted into $q$ in this step. Together with the $\delta$ points already on $L$, this gives the first entry of the second row. 
 The degree changes to $2(2\delta -\lambda ) - \delta$ (twice the preceding degree diminished by the number of points lying in the base locus, i.e. in $L$).

Consider the third row. 
The map $\sigma_r$ interchanges $C$ and $L$. Hence there will then be $\gamma$ points on $L$, $(2\delta -\lambda ) + (3\delta - 2\lambda)$ points on $C$ (the number of points on $L$ in the preceding step plus the number of intersection points with $\mathbb{T}_rX$, which is the degree of the curve). Moreover, the degree of the preceding curve simply gets multiplied by $2$ by Proposition \ref{pGenericJets}, (1).

Thus the passage of the initial tuple to the next one is given by applying to the vector $(\lambda, \gamma , \delta )^t$ the matrix
\[
\begin{pmatrix}
0 & 1 & 0\\
-3 & 0 & 5\\
-4 & 0 & 6
\end{pmatrix}.
\]
By Lemma \ref{lMatrixEigenvalues}, we find $\lambda_3 (g) = (5 +\sqrt{33})/2$. 

To prove that $\lambda_1 (f) = \lambda_3 (f)$ note that, since the roles of $p$ and $q$ are interchangeable in the preceding argument, and all genericity assumptions continue to hold for the configuration $X, L,C, p, q$ after interchanging the roles of $p$ and $q$, 
\[
\lambda_3 (\sigma_r \circ \sigma_q \circ \sigma_p) = \lambda_3 (\sigma_r \circ \sigma_p \circ \sigma_q) 
\]
and since dynamical degrees are invariants for birational conjugacy
\[
\lambda_3 (\sigma_r \circ \sigma_p \circ \sigma_q) = \lambda_3 (\sigma^{-1}_r\circ  (\sigma_r \circ \sigma_p \circ \sigma_q)\circ  \sigma_r). 
\]
But $\sigma_p \circ \sigma_q\circ  \sigma_r$ is the inverse of $\sigma_r \circ \sigma_q \circ \sigma_p$, hence by Lemma \ref{lInverse}, 
\[
\lambda_3 (\sigma_r \circ \sigma_q \circ \sigma_p)= \lambda_3 (\sigma_p \circ \sigma_q\circ  \sigma_r ) = \lambda_1 ( \sigma_r \circ \sigma_q \circ \sigma_p ).\qedhere
\]
\end{proof}

\begin{remark}\xlabel{rSecond}
We suspect that in this case also $\lambda_2(g)= \frac{5 + \sqrt{33}}{2}\approx 5.37...$. Conditional on some genericity assumptions, which, unfortunately, we have not yet been able to show are always realizable at the same time, we can prove this; the result is also supported by independent extensive computer calculations.
\end{remark}

\begin{remark}\xlabel{rRatioFirstSecondDegree}
One should not be left with the impression that it is reasonable to suspect the equality $\lambda_1 (g) = \lambda_2 (g) = \lambda_3 (g)$ for every $g$ in the subgroup of $\mathrm{Bir}(X)$ generated by reflections, let alone for $g$ in all of $\mathrm{Bir}(X)$. For instance, in the case of two points on a line and $N$ points general outside of that line, we think that $\lambda_2 \neq \lambda_1$, but cannot yet prove it. Certainly, obtaining bounds on the overall variance from its mean of the tuple $(\lambda_1(g), \lambda_2(g), \lambda_3 (g))$, for $g$ ranging over $\mathrm{Bir}(X)$, seems to be a main question for proving irrationality of a very general $X$ by this type of quantitative refinement of the Noether-Iskovskikh-Manin approach.
\end{remark}

\section{A triangle of lines}\xlabel{sTriangle}

Here we discuss another interesting geometric configuration of three special points on $X$.

Let $p,q,r$ be three distinct points on $X$ which form the vertices of a triangle of lines on $X$. We again write $p_0:=p, p_1:=q, p_2:=r$, and $L_{p_ip_j}$ for the line joining $p_i$ and $p_j$. We also retain the notation $\mathbb{T}_i$ for the tangent hyperplane section in $p_i$ and write once more 
\[
g = \sigma_r \circ \sigma_q \circ \sigma_p .
\]
We will compute the first and third dynamical degrees of $g$. The strategy follows roughly the steps set down in Subsection \ref{ssComputation}.

We start with a Lemma about matrices, which will be used in the proof of Theorem \ref{tTriangle}. 

\begin{lemma}\xlabel{lMatrices}
Let 
\begin{gather*}
P_0 = 
\begin{pmatrix}
2 & 0 & 0 & -1 & -1 & 0 \\
1 & 1 & 0 & -1 & -1 & 0 \\
1 & 0 & 1 & -1 & -1 & 0 \\
1 & 0 & 0 & 0 & -1 & 0 \\
1 & 0 & 0 & -1 & 0 & 0 \\
0 & 0 & 0 & 0 & 0 & 0
\end{pmatrix} ,
\quad 
P_1 = \begin{pmatrix}
1 & 1 & 0 & -1 & 0 & -1 \\
0 & 2 & 0 & -1 & 0 & -1 \\
0 & 1 & 1 & -1 & 0 & -1 \\
0 & 1 & 0 & 0 & 0 & -1 \\
0 & 0 & 0 & 0 & 0 & 0 \\
0 & 1 & 0 & -1 & 0 & 0
\end{pmatrix}, \\
P_2 = \begin{pmatrix}
1 & 0 & 1 & 0 & -1 & -1 \\
0 & 1 & 1 & 0 & -1 & -1 \\
0 & 0 & 2 & 0 & -1 & -1 \\
0 & 0 & 0 & 0 & 0 & 0 \\
0 & 0 & 1 & 0 & 0 & -1\\
0 & 0 & 1 & 0 & -1 & 0 
\end{pmatrix}.
\end{gather*}
Also, as usual, for $n \in \ZZ$, put $P_n := P_j$ for that $j \in \{ 0,1,2\}$ with $n \equiv j$ (mod $3$). For $i \ge 0$, consider the product
\[
A_i := P_i P_{i-1} \dots P_1 P_0. 
\]
For a vector $v\in \ZZ^6$ denote its $k$-th coordinate by $v_k\in \ZZ$. We start numbering components with zero. Consider $v^{(0)} = (1,0,0,0,0,0)^t$ and $v^{(i+1)} = A_i v^{(0)}$. Moreover, consider the ideals in $\CC [x_0, \dots , x_5]$ given by
\begin{gather*}
I_0 := \langle (x_0, x_1, x_2) \cdot (x_0, x_1, x_2, x_3, x_4), x_3x_4, x_0x_5 \rangle , \\
I_1 := \langle (x_0, x_1, x_2) \cdot (x_0, x_1, x_2, x_3, x_5), x_3x_5, x_1x_4 \rangle , \\
I_2 := \langle (x_0, x_1, x_2) \cdot (x_0, x_1, x_2, x_4, x_5), x_4x_5, x_2x_3 \rangle . 
\end{gather*}
The ideal $I_l$ is generated by a space $M_l$ of quadratic monomials. Similarly to the $P_n$, we also define $I_n$ for $n\in \ZZ$. Now for each monomial $x_{\mu}x_{\nu} \in M_{i+1}$, we consider the sum of vector components $v^{(i+1)}_{\mu}+v^{(i+1)}_{\nu} \in \ZZ$, and the pair $(\mu_0, \nu_0)$, for which this integer is minimal. Then
\begin{gather*}
(\mu_0, \nu_0) = (3,4) \; \mathrm{for} \; i \equiv 2 (3), \quad (\mu_0, \nu_0) = (3,5) \; \mathrm{for} \; i \equiv 0 (3), \\
(\mu_0, \nu_0) = (4,5) \; \mathrm{for} \; i \equiv 1 (3).
\end{gather*}  
\end{lemma}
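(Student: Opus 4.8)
The statement is purely a finite linear-algebra assertion about the explicit integer matrices $P_0,P_1,P_2$ and the resulting vectors $v^{(i)}$, so the natural approach is to (i) identify the relevant structure of the $A_i$, (ii) deduce enough about the growth of the coordinates of $v^{(i)}$ to pin down, for each residue class of $i$ mod $3$, which pair of indices realizes the minimum of $v^{(i)}_\mu+v^{(i)}_\nu$ over the monomials in $M_{i+1}$, and (iii) check the claimed period-$3$ answer $(3,4),(3,5),(4,5)$. First I would reduce everything to the behaviour of the product $P_2P_1P_0$: since $P_{n}$ is periodic with period $3$, the sequence $v^{(0)},v^{(3)},v^{(6)},\dots$ is governed by $T:=P_2P_1P_0$, and $v^{(1)}=P_0v^{(0)}$, $v^{(2)}=P_1P_0v^{(0)}$ interpolate. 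So the plan is to compute $T$ explicitly (a routine $6\times6$ multiplication), find its eigenvalue of largest modulus — this should be $\bigl((1+\sqrt5)/2\bigr)^6$, the square of the $\lambda_1$ value of Theorem \ref{tTriangle}, with a one-dimensional leading eigenspace — together with the corresponding eigenvector $w$ with positive entries in the ``degree'' coordinates.

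The key step is then an asymptotic comparison of coordinates. Writing $v^{(3k)}\approx c\,\mu_1^{k}w$ for the dominant eigenvalue $\mu_1$ of $T$, the pair realizing the minimum of $v^{(i)}_\mu+v^{(i)}_\nu$ is, for $k$ large, determined by which admissible pair $\{\mu,\nu\}\subset M_{i+1}$ minimizes $w_\mu+w_\nu$; one then checks the finitely many small $i$ by hand (or by the computer calculation referenced as \cite{BBS15}) to confirm there is no transient exception. Concretely I would: compute $w$ for $T$ and likewise the analogous leading eigenvectors controlling $v^{(3k+1)}$ and $v^{(3k+2)}$ (obtained by applying $P_0$, resp.\ $P_1P_0$, to $w$); read off from $M_l$ the list of candidate monomials $x_\mu x_\nu$; and verify that within each residue class the unique minimizing pair is the claimed one. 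The structure to exploit is that the matrices $P_l$ are built so that coordinates $3,4,5$ (the ``exceptional-divisor'' slots for the three vertices) grow strictly more slowly than coordinates $0,1,2$ (the ``$H$-slots''), and moreover, because of the cyclic symmetry permuting $p\to q\to r$, the three exceptional coordinates cycle through the roles of ``most recently created / smallest'', which is exactly what produces the period-$3$ pattern $(3,4)\to(3,5)\to(4,5)$ in the minimizing pair.

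The main obstacle is not the generic asymptotics but ruling out transient anomalies for small $i$: a priori the minimizing pair for, say, $i=1$ or $i=2$ could differ from the $k\to\infty$ answer, since $v^{(i)}$ is not yet close to the eigenray. I would handle this by direct computation of $v^{(0)},\dots,v^{(i_0)}$ for a modest $i_0$ (this is what the cited computer calculation does) and then a clean monotonicity argument: once I show that for each $l$ the difference $\bigl(v^{(i)}_\mu+v^{(i)}_\nu\bigr)-\bigl(v^{(i)}_{\mu_0}+v^{(i)}_{\nu_0}\bigr)$ over all competing pairs is strictly positive at some explicit step and is non-decreasing thereafter — which follows from positivity of the relevant entries of $P_2P_1P_0$ applied to a vector that is already coordinatewise dominated appropriately — the claim propagates to all $i$. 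A secondary, more bookkeeping-heavy point is to make sure the index sets $M_l$ are read off correctly from the ideals $I_l$ (the product notation $(x_0,x_1,x_2)\cdot(x_0,x_1,x_2,x_3,x_4)$ denotes all pairwise products, plus the extra generators $x_3x_4$ and $x_0x_5$), so that the minimization is carried out over the right finite set; this is straightforward but is where a sign or index slip would be easy to make.
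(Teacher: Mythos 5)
Your overall strategy (spectral asymptotics of $T=P_2P_1P_0$ plus a finite check plus a propagation argument) is a genuinely different route from the paper's, but as written it has a real gap at the decisive step. The asymptotic comparison $v^{(3k)}\approx c\,\mu_1^k w$ can only identify the minimizing pair for $k$ large (and only if the minimum of $w_\mu+w_\nu$ over $M_{i+1}$ is attained strictly at the claimed pair, which you do not verify), so everything hinges on your ``clean monotonicity argument'' to cover all $i$. The justification you offer for it --- that the differences $\bigl(v^{(i)}_\mu+v^{(i)}_\nu\bigr)-\bigl(v^{(i)}_{\mu_0}+v^{(i)}_{\nu_0}\bigr)$ are non-decreasing ``by positivity of the relevant entries of $P_2P_1P_0$'' --- does not stand up: $P_2P_1P_0$ has many negative entries (its first row is $(3,1,1,-3,-2,0)$), and the linear functionals $v\mapsto v_\mu+v_\nu-v_{\mu_0}-v_{\nu_0}$ do not transform in any obviously monotone way under the $P_l$. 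Without an explicit inductive invariant that is actually preserved by each $P_l$, the claim for general $i$ is not established. A smaller but telling slip: the dominant eigenvalue of the triple product is $2+\sqrt5=\bigl((1+\sqrt5)/2\bigr)^3$ (the paper computes the minimal polynomial $x^2(x-1)^2(x^2-4x-1)$), not $\bigl((1+\sqrt5)/2\bigr)^6$ as you guess.

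The paper's proof is much more elementary and supplies exactly the invariant you are missing. It observes that, because each $P_l$ has a zero row in position $3$, $4$ or $5$ (so the corresponding coordinate of $v^{(i+1)}$ vanishes) and because of which quadratic monomials occur in $M_{i+1}$, the whole statement reduces to the single coordinatewise assertion that each of $v^{(i+1)}_0,v^{(i+1)}_1,v^{(i+1)}_2$ is at least as large as each of $v^{(i+1)}_3,v^{(i+1)}_4,v^{(i+1)}_5$ (together with nonnegativity). That domination is then propagated through one application of $P_l$ at a time by direct inspection of the rows, e.g.\ $d^{(1)}_0=2d^{(0)}_0-d^{(0)}_3-d^{(0)}_4\ge d^{(0)}_0-d^{(0)}_4=d^{(1)}_3$ because $d^{(0)}_0\ge d^{(0)}_3$. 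No eigenvalue analysis, no separate treatment of transients, and no limiting argument is needed. If you want to rescue your approach, the honest fix is to replace the unproved monotonicity claim by precisely such a coordinatewise invariant preserved by each $P_l$ --- at which point you have reproduced the paper's argument and the spectral machinery becomes superfluous for this lemma (it is used later, in Theorem \ref{tTriangle}, to extract the growth rate).
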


\begin{proof}
From the structure of $I_l$, and more specifically $M_l$, and the position of the zero rows in $P_0, P_1$ resp. $P_2$, one sees that it is sufficient to prove for all $i$ that each of $v^{(i+1)}_0, v^{(i+1)}_1, v^{(i+1)}_2$ is greater than or equal to each of $v^{(i+1)}_3, v^{(i+1)}_4, v^{(i+1)}_5$. This is proved by induction. For example, suppose $d^{(0)}=(d^{(0)}_0, \dots , d^{(0)}_5)^t \in \ZZ^6$ is a vector for which this holds, and let us show that it also holds for $d^{(1)}=P_0 \cdot d^{(0)}$. This is an immediate consequence of the inequalities of the hypothesis, e.g. $d^{(1)}_0 = 2 d_0^{(0)} - d_3^{(0)} - d_4^{(0)} \ge d_0^{(0)} - d^{(0)}_4 = d_3^{(1)}$ since $d_0^{(0)} \ge d_3^{(0)}$. We omit the (mechanical) verification of all possible cases.
\end{proof}

Now choose coordinates $x_0, \dots , x_5$ in $\PP^5$ such that
\begin{gather*}
p_0 = (0:0:0:0:0:1), \; p_2= (0:0:0:0:1:0), \; p_2=(0:0:0:1:0:0) \\
\mathrm{and}\; \mathbb{T}_{p_i} X = \{ x_i =0 \}.
\end{gather*}

In these coordinates, we can write

\begin{align*}
\sigma_{p_0} = & (x_0^2: x_0x_1: x_0x_2: x_0x_3: x_0x_4: Q_0) \; \mathrm{with} \; Q_0 \in  M_0, \\
\sigma_{p_1} = & (x_1x_0: x_1^2: x_1x_2: x_1x_3: Q_1: x_1x_5) \; \mathrm{with} \; Q_1 \in  M_1, \\
\sigma_{p_2} = & (x_2x_0: x_2x_1: x_2^2: Q_2: x_2x_4: x_2x_5) \; \mathrm{with} \; Q_2 \in  M_2 .
\end{align*}

\begin{lemma}\xlabel{lDynamicsCurveTraits}
Consider a curve trait $\gamma_0$ transverse to $\mathbb{T}_{p_0}$ given by $(f^{(0)}_0:f_1: \dots : f^{(0)}_5)$ where $f^{(0)}_j$ is a power series in a local parameter $t$ of degree $d_j^{(0)}=1$ for $j=0$ and $d_j^{(0)} =0$ otherwise. Then, for $d^{(0)} : = (d_0^{(0)} , \dots , d^{(0)}_5)^t$ and $i \ge 0$, the trait
\[
\gamma_{i+1}:=\sigma_{p_i} \circ \dots \circ \sigma_{p_0} [\gamma_0]
\]
is well-defined and given by local power series $(f^{(i+1)}_0: \dots : f^{(i+1)}_5)$ with a degree vector $d^{(i+1)} = A_i d^{(0)}$.
\end{lemma}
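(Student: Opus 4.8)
\textbf{Proof plan for Lemma \ref{lDynamicsCurveTraits}.}

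The plan is to prove the statement by induction on $i$, with the inductive step being essentially a bookkeeping computation together with an application of the two preceding lemmas. At stage $i$ we are given the curve trait $\gamma_{i+1}$ defined by a $6$-tuple of power series $(f_0^{(i+1)}:\dots:f_5^{(i+1)})$ whose orders form the vector $d^{(i+1)}=A_i d^{(0)}$, and we must show that applying $\sigma_{p_{i+1}}$ produces a trait with order vector $A_{i+1}d^{(0)}=P_{i+1}d^{(i+1)}$. Since $\sigma_{p_{i+1}}$ is given in coordinates by the explicit quadratic tuple displayed just before the lemma (with the $j$-th component being $x_{i+1}x_j$ for $j\neq$ the distinguished index, and $Q_{i+1}\in M_{i+1}$ in that slot), substituting the power series $f_j^{(i+1)}(t)$ yields, in each slot $j\neq$ distinguished, a series of order $d^{(i+1)}_{i+1}+d^{(i+1)}_j$, while the distinguished slot receives $Q_{i+1}(f^{(i+1)})$, whose order is $\min$ over the monomials $x_\mu x_\nu\in M_{i+1}$ of $d^{(i+1)}_\mu+d^{(i+1)}_\nu$ --- \emph{provided} there is no cancellation among the lowest-order terms.

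First I would address the cancellation issue: this is where the genericity of $\gamma_0$ enters. For a sufficiently general trait, the leading coefficients of the $f_j^{(i+1)}$ are generic subject to the constraints already imposed, so the unique minimal-order monomial contributes a nonzero leading term to $Q_{i+1}(f^{(i+1)})$, i.e. no cancellation occurs. Here is the role of Lemma \ref{lMatrices}: it guarantees that the minimum over monomials of $M_{i+1}$ is attained at a \emph{single} pair $(\mu_0,\nu_0)$ --- namely $(3,4)$, $(3,5)$ or $(4,5)$ according to the residue of $i\bmod 3$ --- so that the leading order of the distinguished slot is unambiguously $d^{(i+1)}_{\mu_0}+d^{(i+1)}_{\nu_0}$, with no competition from other monomials that could conspire to cancel. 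One then checks that the resulting order vector is exactly $P_{i+1}$ applied to $d^{(i+1)}$: the non-distinguished slots give $d^{(i+1)}_{i+1}+d^{(i+1)}_j$, and after factoring out the common factor $t^{\,m}$ (where $m$ is the minimum of all six slot-orders, which by Lemma \ref{lMatrices} is one of the distinguished-index entries and gets subtracted uniformly, corresponding to the rescaling by the common power of the local coordinate) one reads off precisely the rows of $P_{i+1}$. One must also verify well-definedness of the move, i.e. that $\gamma_{i+1}$ is not contained in the indeterminacy locus $\mathcal{L}_{p_{i+1}}$ of $\sigma_{p_{i+1}}$; this again follows from genericity of $\gamma_0$ together with Proposition \ref{pGenericConfiguration}, which ensures the backward transforms of the relevant loci stay divisorial so that a general trait avoids them.

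The main obstacle I expect is precisely the no-cancellation verification: one needs to know that the leading coefficients of the iterated power series $f^{(i+1)}_j$ remain in ``general position'' relative to the quadratic forms $Q_{i+1}$, and that this genericity is preserved under all iterations simultaneously (a countable intersection of Zariski-open conditions). Organizing this cleanly --- rather than checking it ad hoc at each step --- is the delicate point, and it is exactly what the combinatorial Lemma \ref{lMatrices} is designed to streamline, by pinning down in advance which monomial governs the leading behaviour at each stage so that the genericity condition to be imposed is a single explicit non-vanishing at each step. The remaining matrix identity $A_{i+1}=P_{i+1}A_i$ is immediate from the definition of the $A_i$, so once the leading-order analysis is in place the induction closes.
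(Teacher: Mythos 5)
Your proposal follows essentially the same route as the paper: substitute the local power series into the quadratic formula for $\sigma_{p_i}$, use Lemma \ref{lMatrices} to identify the order of the distinguished slot $Q_i(f)$ and to see that it is the minimal order among all six slots, divide by the corresponding power of $t$ to pass to the strict transform, and read off the matrix $P_i$, iterating to get $A_i$. Your explicit attention to the no-cancellation issue in the leading term of $Q_{i}(f)$ is a point the paper's proof passes over silently, so it is a welcome refinement rather than a divergence (though note that Proposition \ref{pGenericConfiguration} concerns the conic-and-line configuration and is not the reference the paper uses here).
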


\begin{proof}
Formally substituting $(f^{(0)}_0: \dots : f^{(0)}_5)$ for $(x_0: \dots : x_5)$ in the formula for $\sigma_{p_0}$, we obtain a  power series of degrees
\[
(2d_0^{(0)} , \, d_0^{(0)} +d_1^{(0)} , \, d_0^{(0)} + d_2^{(0)} , \, d_0^{(0)}+d_3^{(0)}, \, d_0^{(0)}+d_4^{(0)}, \, d_3^{(0)} + d_4^{(0)})
\]
where we used Lemma \ref{lMatrices} to justify the last entry $d_3^{(0)} + d_4^{(0)}$ in this vector. Moreover, again by Lemma \ref{lMatrices}, all entries in this vector preceding the last one are bigger than or equal to the last one. This means that we can divide $(f^{(0)}_0: \dots : f^{(0)}_5)$ by $t^{d^{(0)}_3+d^{(0)}_4}$ to obtain local power series for the strict transform 
\[
\sigma_0 [ \gamma_0] .
\]
Accordingly, these have degrees 
\begin{gather*}
(2d_0^{(0)} , \, d_0^{(0)} +d_1^{(0)} , \, d_0^{(0)} + d_2^{(0)} , \, d_0^{(0)}+d_3^{(0)}, \, d_0^{(0)}+d_4^{(0)}, \, d_3^{(0)} + d_4^{(0)})\\
 - (d_3^{(0)}+d_4^{(0)}) (1,1,1,1,1,1). 
\end{gather*}
Hence the formula for $P_0$; using Lemma \ref{lMatrices} repeatedly, we obtain the full assertion of Lemma \ref{lDynamicsCurveTraits}. 
\end{proof}

\begin{theorem}\xlabel{tTriangle}
We have, for $p,q,r$ the vertices of a triangle of lines on $X$,
\[
\lambda_1 (g) = \lambda_3 (g) = \left( \frac{1+ \sqrt{5}}{2}\right)^3 = 4.236...
\]
\end{theorem}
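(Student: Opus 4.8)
The plan is to mirror the strategy used for Theorem \ref{tConicLine}, but with the bookkeeping organized around the triangle of lines. First I would set up a very general curve $\Gamma = \Gamma_0 \in (H)^3$ satisfying the analogues of Propositions \ref{pGenericConfiguration} and \ref{pGenericJets} for the triangle configuration: the backward moves $M_{\nu \shortrightarrow 0}[\mathbb{T}_\nu]$ of the tangent divisors stay divisorial and distinct from later tangent divisors, and $\Gamma$ meets each $M_{\nu \shortrightarrow 0}[\mathbb{T}_\nu]$ transversely in points off the line loci $M_{\nu\shortrightarrow 0}[\mathcal{L}_\nu]$ and off the other $M_{j\shortrightarrow 0}[\mathbb{T}_j]$. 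These genericity statements reduce to a finite (computer-assisted) check as in the conic-and-line case; the combinatorial input is exactly Lemma \ref{lMatrices}, which tells us \emph{which} line on the cubic surface $K$ the base locus of $\sigma_{p_i}$ cuts out on the relevant curve trait at each stage — that is the content of the assertion about the minimizing pair $(\mu_0,\nu_0)$.

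Next I would compute $\lambda_3(g)$ by tracking the degree of the birational transforms $\Gamma_{i} = M_{0\shortrightarrow i}[\Gamma]$ directly. As in Theorem \ref{tConicLine}, the key observation is that the degree $\delta_i$ of $\Gamma_i$ is governed by the intersection numbers of $\Gamma_{i-1}$ with the three lines of the triangle (and possibly the tangent divisors), so one introduces an integer state vector $v_\nu \in \ZZ^r$ recording the degree together with these intersection multiplicities. Each $\sigma_{p_i}$ acts linearly on this state vector; concretely the three matrices $P_0, P_1, P_2$ of Lemma \ref{lMatrices} (or rather their $3\times 3$ analogues for the degree/incidence data, extracted from Lemma \ref{lDynamicsCurveTraits}) effect the transitions, and one application of $g = \sigma_{p_2}\sigma_{p_1}\sigma_{p_0}$ corresponds to the product $P_2 P_1 P_0$. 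By Lemma \ref{lMatrixEigenvalues}, $\lambda_3(g)$ is then the dominant eigenvalue of this product; one checks by a characteristic-polynomial computation that this eigenvalue equals $\varphi^3 = \left(\frac{1+\sqrt 5}{2}\right)^3$, where the appearance of a power of the golden ratio reflects the cyclic symmetry of the triangle — effectively the per-step growth factor is $\varphi$ and three reflections compose. I would verify that the hypotheses of Lemma \ref{lMatrixEigenvalues} are met (simple dominant real eigenvalue, $e_1$ not in the span of the subdominant eigenspaces, the dominant eigenvector not in the span of $e_2,\dots,e_r$).

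To pass from $\lambda_3(g)$ to $\lambda_1(g)$ I would invoke the symmetry argument used at the end of the proof of Theorem \ref{tConicLine}: the three points $p,q,r$ play symmetric roles in the triangle, so all genericity assumptions are preserved under permuting them, whence $\lambda_3(\sigma_r\sigma_q\sigma_p) = \lambda_3(\sigma_p\sigma_q\sigma_r)$ (after a cyclic relabeling and using birational conjugacy invariance of dynamical degrees). But $\sigma_p\sigma_q\sigma_r$ is the inverse of $g = \sigma_r\sigma_q\sigma_p$, so Lemma \ref{lInverse} gives $\lambda_3(g^{-1}) = \lambda_1(g)$, and hence $\lambda_1(g) = \lambda_3(g) = \varphi^3$.

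The main obstacle, as in the conic-and-line case, is Step 3/Step 4: proving that the countable intersection of the required Zariski-open genericity conditions on $(X, p, q, r)$ and on $\Gamma$ is nonempty. One must show that no tangent divisor gets contracted under the backward dynamics (so that conditions on $\Gamma_\nu$ near $\mathbb{T}_\nu$ can be pulled back to conditions on $\Gamma$ near $M_{\nu\shortrightarrow 0}[\mathbb{T}_\nu]$), and that certain branches of the transforms never hit $p,q,r$ — for this one would again restrict the dynamics to the cubic surface $K$ cut out by the span of a chosen line through $p_i$ and the triangle, analyze the return map to a line of the triangle as a projectivity with an attracting fixed point, and finish with an explicit computer check of one configuration (as in Lemma \ref{lNotExistent} and \cite{BBS15}). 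Making this reduction-to-finiteness argument go through cleanly, and checking that the bad-point sets one must avoid are indeed finite and disjoint from the attractor, is where essentially all the work lies; the eigenvalue computation itself is then routine.
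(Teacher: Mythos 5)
There is a genuine gap: you have imported the wrong framework. The heart of the paper's proof of Theorem \ref{tTriangle} is that the triangle configuration behaves in a way \emph{opposite} to the conic-and-line case, so the analogues of Propositions \ref{pGenericConfiguration} and \ref{pGenericJets} are not what is needed -- in fact the analogue of Proposition \ref{pGenericJets}(2) is false here. Because any two of $p_0,p_1,p_2$ lie on a line of the triangle, Remark \ref{rTangent} makes every tangent divisor $\mathbb{T}_{p_i}$ invariant under all three reflections; combined with the local computation of Lemmas \ref{lMatrices} and \ref{lDynamicsCurveTraits}, this yields the paper's key Claim that after one application of $\sigma_{p_0}\sigma_{p_2}\sigma_{p_1}\sigma_{p_0}$ \emph{all} intersection points of the transforms $\Gamma_i$ with the $\mathbb{T}_{p_i}$ are concentrated at the vertices $p_0,p_1,p_2$ inside the plane $\Lambda=\langle p_0,p_1,p_2\rangle$. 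That is, the branches of the transforms pass through the reflection points with ever-growing multiplicities -- precisely the behaviour your genericity conditions are designed to exclude. This concentration is also why the answer is $\varphi^3=2+\sqrt 5$ rather than $2^3$: the degree-doubling of each reflection is partially cancelled by the base-locus multiplicity at the vertex, and a state vector recording only ``degree plus number of points on the lines, branches avoiding $p,q,r$'' would produce the wrong transition matrices. Consequently there is no cubic surface $K$, no return-map/attractor analysis, and no computer check in the paper's argument; your reading of Lemma \ref{lMatrices} as identifying ``which line on $K$ the base locus cuts out'' is also off -- the minimizing pair $(\mu_0,\nu_0)$ identifies which quadratic monomial in $I_{i+1}$ has minimal order of vanishing along the current trait, i.e.\ the power of the local parameter to divide out when forming the strict transform.

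The correct bookkeeping is the full $6$-dimensional degree vector of the local power series parametrizing a trait (equivalently, its intersection multiplicities with the six coordinate hyperplanes in the adapted coordinates where $\mathbb{T}_{p_i}=\{x_i=0\}$), with transitions given by the $6\times 6$ matrices $P_0,P_1,P_2$ themselves, not $3\times 3$ reductions. One then computes that $P=P_2P_1P_0$ has minimal polynomial $x^2(x-1)^2(x^2-4x-1)$, whose dominant root is $2+\sqrt 5=\bigl(\tfrac{1+\sqrt 5}{2}\bigr)^3$, and the Claim guarantees that the growth of the global degrees of the $\Gamma_i$ coincides with the growth of these local multiplicities. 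Your final step (deducing $\lambda_1=\lambda_3$ from the symmetry of the configuration, birational conjugacy, and Lemma \ref{lInverse}) is fine and matches the paper, which disposes of it in one sentence.
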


\begin{remark}\xlabel{rSuperstition}
If you are into number mysticism, it will not have escaped you that $\frac{1+ \sqrt{5}}{2}$ is the Golden Ratio.
\end{remark}

\begin{proof}[Proof of Theorem \ref{tTriangle}]
The fact that $\lambda_1 = \lambda_3$ follows from symmetry. 

Multiplying the three matrices $P_0, P_1, P_2$ we obtain 
\[
P:=\begin{pmatrix}
3 & 1 & 1 & -3 & -2 & 0 \\
2 & 2 & 1 & -3 & -2 & 0 \\
2 & 1 & 2 & -3 & -2 & 0 \\
0 & 0 & 0 & 0 & 0 & 0 \\
1 & 0 & 1 & -1 & -1 & 0 \\
1 & 1 & 1 & -2 & -1 & 0
\end{pmatrix}.
\]
The matrix $P$ has minimal polynomial 
\[
x^2 (x-1)^2 (x^2 - 4x -1)
\]
and the root of the last factor with the largest absolute value is 
\[
2 + \sqrt{5} = \left( \frac{1+ \sqrt{5}}{2}\right)^3 .
\]
Thus, to finish the proof, it suffices to show that the growth behavior of the degrees of the power series defining the branch $\gamma_{i+1}$ coincides with the growth behavior of the degrees of birational transforms $\Gamma_{i+1}$ of a very general curve $\Gamma_0$ in $H^3$ on $X$ under the evolution of the dynamical system. This will follow from the following

\textbf{Claim}: after application of 
\[
\sigma_{p_0}\sigma_{p_2}\sigma_{p_1}\sigma_{p_0}
\]
all subsequent birational transforms $\Gamma_i$ have no intersection points with any of $\mathbb{T}_{p_i}$ that lie outside of the plane $\Lambda=\langle p_0, p_1, p_2\rangle$, and in each step, all the intersection points are concentrated in $p_0, p_1$ or $p_2$.

If the claim is true, the proof is complete, since then the growth behavior of the degrees of the birational transforms is the same as the one of the degrees of the power series defining the branches, since both grow as the intersection multiplicities of $\Gamma_i$ with $\{ x_0=0\}$, $\{ x_1=0 \}$ resp. $\{ x_2 = 0 \}$. 

The claim, however, follows directly from two facts: (1) the tangent divisors $\mathbb{T}_{p_i}$ are invariant under all three reflections since any two points lie on a line; (2) by the formulas in Lemmas \ref{lMatrices} and \ref{lDynamicsCurveTraits}, every trait $\gamma_{i+1}$ has center in the plane after application of $\sigma_{p_0}\sigma_{p_2}\sigma_{p_1}\sigma_{p_0}$ (a priori, it might get ``pushed outside" again in case it passes through $p_i$ when $\sigma_{p_i}$ is the next transformation to be applied). These two facts imply that after one application of $\sigma_{p_2}\sigma_{p_1}\sigma_{p_0}$ all intersection points of $\Gamma_0$ with $\mathbb{T}_{p_0}$, $\mathbb{T}_{p_1}$ and $\mathbb{T}_{p_2}$ are contracted inside the plane $\Lambda$, i.e., in the sequel there are no intersection points of any of the birational transforms with a $\mathbb{T}_{p_i}$ outside of $\Lambda$. The assertion about the concentration of the intersection points in $p_0, p_1$ or $p_2$ also follows from the formulas in Lemmas \ref{lMatrices} and \ref{lDynamicsCurveTraits}.
\end{proof}

\end{document}